\numberwithin{equation}{section}
\newtheorem{theorem}{Theorem}[section]
\newtheorem{proposition}[theorem]{Proposition}
\newtheorem{lemma}[theorem]{Lemma}
\theoremstyle{definition}
\begin{document}
\title
[Positive solutions to multi-critical elliptic problems]
{Positive solutions to multi-critical elliptic problems}

\maketitle
\begin{center}
\author{Fanqing Liu, \ \  Jianfu Yang}
\footnote{ Email addresses: fanqliu@163.com, jfyang\_2000@yahoo.com, xiaohui.yu@szu.edu.cn}
\end{center}
\begin{center}

\address{School of Mathematics
and Statistics, Jiangxi Normal University, Nanchang,
Jiangxi 330022, P. R. China }

and

\author{ Xiaohui Yu}

\address{College of Mathematics and Statistics, Shenzhen University, Shenzhen, Guangdong 518060, P. R. China }
\end{center}

\begin{abstract}
In this paper, we investigate the existence of multiple solutions to the following multi-critical elliptic problem
\begin{equation}\label{eq:0.1}
\left\{\begin{aligned}
-\Delta u & =\lambda |u|^{p-2}u +\sum_{i=1}^k(|x|^{-(N-\alpha_i)}*|u|^{2^*_i})|u|^{2^*_i-2}u\quad {\rm in}\quad \Omega,\\
&u\in H^1_0(\Omega)\\
\end{aligned}\right.
\end{equation}
in connection with the topology of the bounded domain $\Omega\subset \mathbb{R}^N, \,N\geq 4$, where $\lambda>0$,  $2^*_i=\frac{N+\alpha_i}{N-2}$ with $N-4<\alpha_i<N,\ \ i=1,2,\cdot\cdot\cdot, k$ are critical Hardy-Littlewood-Sobolev exponents and $2<p<22^*_{min}$ with $2^*_{min}=\min\{2^*_i, \ i=1,2,\cdot\cdot\cdot, k\}$. We show that there is $\lambda^*>0$ such that if $0<\lambda<\lambda^*$
problem \eqref{eq:0.1} possesses at least $cat_\Omega(\Omega)$ positive solutions. We also study the existence and uniqueness of solutions for the limit problem of \eqref{eq:0.1}.

 {\bf Key words }:  Multi-critical problem, Multiple solutions, elliptic equation.

{\bf MSC2020:} 35J10, 35J20, 35J61

\end{abstract}

\bigskip

\section{Introduction}

\bigskip

In  this paper, we investigate the existence of multiple positive solutions to the following multi-critical elliptic problem
\begin{equation}\label{eq:1.1}
\left\{\begin{aligned}
-\Delta u & =\lambda |u|^{p-2}u +\sum_{i=1}^k(|x|^{-(N-\alpha_i)}*|u|^{2^*_i})|u|^{2^*_i-2}u\quad {\rm in}\quad \Omega,\\
&u\in H^1_0(\Omega)\\
\end{aligned}\right.
\end{equation}
in connection with the topology of the bounded domain $\Omega\subset \mathbb{R}^N, \,N\geq 4$, where $\lambda>0$, $H^1_0(\Omega)$ is the usual Sobolev space equipped with the norm $\|u\|_{H^1_0(\Omega)} = (\int_\Omega|\nabla u|^2\,dx)^{\frac 12}$, $2^*_i=\frac{N+\alpha_i}{N-2}$ with $N-4<\alpha_i<N,\ \ i=1,2,\cdot\cdot\cdot, k$ are critical Hardy-Littlewood-Sobolev exponents and $2<p<22^*_{min}$ with $2^*_{min}=\min\{2^*_i, \ i=1,2,\cdot\cdot\cdot, k\}$.

If $k = 1$ and $p=2$, equation  \eqref{eq:1.1} becomes the critical Choquard problem
\begin{equation}\label{eq:1.2}
\left\{\begin{aligned}
-\Delta u  &=\lambda u + (|x|^{-(N-\alpha)}*|u|^{2^*_\alpha})u^{2^*_\alpha-1}\quad  {\rm in}\quad \Omega,\\
&u\in H^1_0(\Omega),\\
\end{aligned}\right.
\end{equation}
where $2^*_\alpha=\frac{N+\alpha}{N-\alpha}$.  Such a problem  has been extensively studied for subcritical and critical cases in \cite{GY, GhP, Go, GS, MS, MS1} and references therein.
Especially, a solution of critical problem \eqref{eq:1.2} was found in \cite{GY} by the mountain pass lemma in spirit of \cite{BN}. In \cite{GY},
the compactness of the $(PS)_c$   sequence associated with problem \eqref{eq:1.2} is retained if $c$ is strictly less than a threshold
value related to the best constant
\begin{equation}\label{eq:1.3}
S_{\alpha} = \inf_{u\in D^{1,2}(\mathbb{R}^N)\setminus\{0\}}\frac{\int_{\mathbb{R}^N}|\nabla u|^2\,dx}{\bigg(\int_{\mathbb{R}^N}\int_{\mathbb{R}^N}\frac{|u(x)|^{2^*_\alpha}|u(y)|^{2^*_\alpha}}{|x-y|^{N-\alpha}}\,dxdy\bigg)^{\frac{N-2}{N+\alpha}}}
\end{equation}
of the Hardy-Littlewood-Sobolev inequality. For multi-critical problems, such a threshold value varies from problem to problem. Indeed, for the doubly critical problem defined in a bounded domain $\Omega\subset\mathbb{R}^N$
\begin{equation}\label{eq:1.4}
\left\{\begin{aligned}
-\Delta u  &=\mu \frac{u}{|x|^2} + K(x)\frac{u^{2^*(s)-1}}{|x|^s} +Q(x)\frac{u^{2^*(t)-1}}{|x-x_0|^t} +f(u)\quad  {\rm in}\quad \Omega,\\
&u>0\ \ \quad {\rm in} \quad\Omega,\\
&u=0\ \ \quad {\rm on} \quad \partial\Omega,\\
\end{aligned}\right.
\end{equation}
where $x_0\in\Omega$ and $x_0\not= 0$, $2^*(s)=\frac{2(N-s)}{N-2}$ and $2^*(t)=\frac{2(N-t)}{N-2}$ are the critical Sobolev-Hardy exponents, it was proved in \cite{GP} that the
$(PS)_c$ condition holds true if
$$
c\in (0,\min\{\frac{2-s}{2(N-s)}K(0)^{\frac{N-2}{s-2}}S_{\mu, s}^{\frac{N-s}{2-s}}, \frac{2-t}{2(N-t)}Q(x_0)^{\frac{N-2}{t-2}}S_{0,t}^{\frac{N-t}{2-t}}\}),
$$
where $S_{\mu, s}$ and $S_{0,t}$ are the best Sobolev-Hardy constants defined in \cite{GP}. In general, it was studied in \cite{KL} the existence of solutions for  multi-critical  Sobolev-Hardy problems with different singularities. The situation becomes different if $x_0=0$, or all Sobolve-Hardy terms have the same singular point. In this case, the multi-critical problem
\begin{equation}\label{eq:1.5}
\left\{\begin{aligned}
-\Delta u  &=\frac {\lambda}{|x|^s}u^{p-1} +\sum_{i=1}^l \frac {\lambda_i}{|x|^{s_i}}u^{2^*(s_i)-1} + u^{2^*-1}\quad  {\rm in}\quad \Omega,\\
&u>0\ \ \quad {\rm in} \quad\Omega,\\
&u=0\ \ \quad {\rm on} \quad \partial\Omega,\\
\end{aligned}\right.
\end{equation}
was considered in \cite{G}. The threshold value for compactness relies on the ground state solution of the limit problem
\begin{equation}\label{eq:1.6}
-\Delta u  =\sum_{i=1}^l \frac {\lambda_i}{|x|^{s_i}}|u|^{2^*(s_i)-2}u + |u|^{2^*-2}u\quad  {\rm in}\quad \mathbb{R}^N,
\end{equation}
that is, a minimizer of the problem
\begin{equation}\label{eq:1.7}
P_{min}= \inf\{E(u): u\in \mathcal{M}_E\},
\end{equation}
where
\[
E(u) = \frac 12\int_{\mathbb{R}^N}|\nabla u|^2\,dx-\sum_{i=0}^l\frac {\lambda_i}{2^*(s_i)}\int_{\mathbb{R}^N}\frac{|u|^{2^*(s_i)}}{|x|^{s_i}}\,dx-\frac 1{2^*}\int_{\mathbb{R}^N}|u|^{2^*}\,dx
\]
and
\[
\mathcal{M}_E=\{u\in D^{1,2}(\mathbb{R}^N)\setminus\{0\}:\langle E'(u),u\rangle =0\}
\]
as well as
$$
D^{1,2}(\mathbb{R}^N)=\{u\in L^{\frac {2N}{N-2}}(\mathbb{R}^N): |\nabla u|\in L^2(\mathbb{R}^N)\}
$$
with the norm $\|u\|_{D^{1,2}(\mathbb{R}^N)}=(\int_{\mathbb{R}^N}|\nabla u|^2\,dx)^{\frac 12}$.
It is known from \cite{ZZ} that problem \eqref{eq:1.6} has a positive ground state solution $U$ satisfying
\begin{equation}\label{eq:1.8}
U(x)\leq C(1+|x|^{2-N})\quad |\nabla U(x)|\leq C(1+|x|^{1-N}).
\end{equation}
The decaying law in \eqref{eq:1.8} allows one to verify $(PS)_c$ condition for the functional associated with problem \eqref{eq:1.6} if $c\in (0,P_{min})$. An existence result of problem \eqref{eq:1.6} is then given in \cite{G}. We remark that similar problems with the singular point $0\in\partial\Omega$ have been extensively studied in \cite{CZZ, HLW, LL} etc, see also references therein.

In this paper, we first consider the existence of positive solution of problem \eqref{eq:1.1}. Such a solution will be found as a critical point of the functional
\begin{equation}\label{eq:1.9}
I_{\lambda,\Omega}(u) = \frac 12\int_{\Omega}|\nabla u|^2\,dx-\frac 1p\lambda\int_{\Omega} u_+^p\,dx -\sum_{i=1}^k\frac {1}{22^*_i}\int_{\Omega}\int_{\Omega}\frac{u_+(x)^{2^*_i}u_+(y)^{2^*_i}}{|x-y|^{N-\alpha_i}}\,dxdy
\end{equation}
in $H^1_0(\Omega)$, where $u_+=\max\{u,0\}$. In the context, for the simplicity we write $u^q$ as $u_+^q$ for $2\leq q\leq 2^*$ etc in functionals. It is standard to show that critical points of $I_{\lambda,\Omega}(u)$ are positive solutions of problem \eqref{eq:1.1}, see for instance \cite{W}. In order to verify $(PS)_c$  condition for  $I_{\lambda,\Omega}(u)$, that is,  any sequence $\{u_n\}\subset H^1_0(\Omega)$ such that $I_{\lambda,\Omega}(u_n)\to c,\ \ I'_{\lambda,\Omega}(u_n)\to 0$ as $n\to\infty$, contains a convergent subsequence, we need to take into account the limit problem of problem \eqref{eq:1.1}:
\begin{equation}\label{eq:1.10}
-\Delta u  = \sum_{i=1}^k(|x|^{-(N-\alpha_i)}*|u|^{2^*_i})|u|^{2^*_i-2}u\quad {\rm in}\quad \mathbb{R}^N.
\end{equation}
The threshold value for the compactness of $I_{\lambda,\Omega}(u)$ will be given by
\begin{equation}\label{eq:1.11}
m(\mathbb{R}^N) = \inf \{J(u): u\in \mathcal{M}_{\mathbb{R}^N}\},
\end{equation}
where
\begin{equation}\label{eq:1.12}
J(u) = \frac 12\int_{\mathbb{R}^N}|\nabla u|^2\,dx-\sum_{i=1}^k\frac {1}{22^*_i}\int_{\mathbb{R}^N}\int_{\mathbb{R}^N}\frac{u(x)^{2^*_i}u(y)^{2^*_i}}{|x-y|^{N-\alpha_i}}\,dxdy
\end{equation}
is the corresponding functional of problem \eqref{eq:1.10} and
\begin{equation}\label{eq:1.13}
\mathcal{M}_{\mathbb{R}^N} =\{u\in D^{1,2}(\mathbb{R}^N)\setminus\{0\}:\langle J'(u),u\rangle =0\}
\end{equation}
is the associated Nehari manifold. We will show that $I_{\lambda,\Omega}(u)$ satisfies $(PS)_c$ condition if $c\in (0, m(\mathbb{R}^N))$. To verify this condition, one has to know not only the existence of a ground state solution of problem \eqref{eq:1.10}, but also the decaying law of the solution at infinity. Our first result consists of the existence of ground state solutions of problem \eqref{eq:1.10} and their decay at infinity. Actually, we find a ground state solution of problem \eqref{eq:1.10} with an explicit form. The result is stated as follows.
\begin{theorem}\label{thm:1} Problem \eqref{eq:1.10} has a unique positive solution in the form
\[
U(x) = C\Big(\frac\varepsilon{\varepsilon^2+|x-x_0|^2}\Big)^{\frac {N-2}2}
\]
for $x_0\in\mathbb{R}^N$ and $\varepsilon>0$.
\end{theorem}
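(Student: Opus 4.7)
The plan is to split the proof into two parts: first, an explicit computation showing that the bubbles of the stated form solve~\eqref{eq:1.10} with $C$ uniquely determined by the coefficients of the equation; and second, a classification argument showing that every positive solution must be of this form.

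For the first part, the key input is the conformal identity
\[
\int_{\mathbb{R}^N}\frac{dy}{|x-y|^{N-\alpha_i}(1+|y|^2)^{(N+\alpha_i)/2}}=A_i\,(1+|x|^2)^{-(N-\alpha_i)/2},
\]
valid for each $i$, which can be obtained from the Chen-Li-Ou classification of positive solutions of the fractional Yamabe equation $(-\Delta)^{\alpha_i/2}w=w^{(N+\alpha_i)/(N-\alpha_i)}$: both sides of the identity are positive solutions of $(-\Delta)^{\alpha_i/2}v=c(1+|x|^2)^{-(N+\alpha_i)/2}$ with the same decay at infinity, hence they coincide up to a multiplicative constant. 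Substituting the ansatz $U(x)=C(1+|x|^2)^{-(N-2)/2}$ into~\eqref{eq:1.10}, every exponent lines up: each convolution term equals $A_i\,C^{2\cdot 2^*_i-1}(1+|x|^2)^{-(N+2)/2}$ because $(N-\alpha_i)/2+(N-2)(2^*_i-1)/2=(N+2)/2$. Matching with $-\Delta U=C\,N(N-2)(1+|x|^2)^{-(N+2)/2}$ reduces~\eqref{eq:1.10} to the scalar equation
\[
\sum_{i=1}^k A_i\,C^{2(2^*_i-1)}=N(N-2).
\]
Since each exponent $2(2^*_i-1)>0$, the left-hand side is strictly increasing in $C>0$ and surjective onto $(0,\infty)$, so $C$ is uniquely fixed. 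The full three-parameter family in the statement then follows from the translation and dilation invariance of~\eqref{eq:1.10}.

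For the second part, I would recast~\eqref{eq:1.10} as the Riesz integral equation
\[
u(x)=c_N\int_{\mathbb{R}^N}\frac{1}{|x-y|^{N-2}}\sum_{i=1}^k\bigl(|y|^{-(N-\alpha_i)}*u^{2^*_i}\bigr)(y)\,u(y)^{2^*_i-1}\,dy
\]
and apply the moving-plane method on integral equations in the style developed for the single-term critical Choquard problem in \cite{GY}. Each Choquard nonlinearity is conformally covariant with the right weight, so the Kelvin transform $u(x)\mapsto|x|^{2-N}u(x/|x|^2)$ preserves the system and lets the planes be started from infinity. One concludes that $u$ is radially symmetric and monotone decreasing about some point $x_0\in\mathbb{R}^N$; together with the explicit description obtained in the first part, this forces $u$ to be a bubble with the value of $C$ found there.

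The main obstacle is this classification step. The delicate ingredients are: (i) establishing the sharp asymptotic decay $u(x)\sim|x|^{2-N}$ at infinity before Kelvin-inverting, which requires a bootstrap that exploits the cancellation $(\alpha_i-N)+(2-N)(2^*_i-1)=-(N+2)$ so that each Choquard term on the right-hand side of~\eqref{eq:1.10} decays like $|x|^{-(N+2)}$; and (ii) closing the moving-plane integral inequalities simultaneously for all $k$ nonlocal nonlinearities. The latter is tractable because every term contributes a positive integrand with the correct reflection monotonicity, but the $k$ HLS-type estimates must be combined carefully so that no one of them loses integrability along the way.
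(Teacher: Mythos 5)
Your existence computation is essentially sound: the conformal identity you quote is the classical evaluation of the Riesz potential of $(1+|y|^2)^{-(N+\alpha_i)/2}$ (no fractional-Yamabe classification is needed to justify it), the exponents do line up, and the scalar relation $\sum_{i=1}^k A_i C^{2(2^*_i-1)}=N(N-2)$ fixes $C$ uniquely by strict monotonicity. This is even more direct than the paper's Section 2, which instead produces the bubble variationally, as a minimizer of $m(\mathbb{R}^N)$ on the Nehari manifold, by exploiting that the single Aubin--Talenti profile attains every $S_{\alpha_i}$ simultaneously; the paper needs that variational version anyway later (attainment of $m(\mathbb{R}^N)$ is used in Sections 4--5), but for Theorem \ref{thm:1} alone your substitution argument is acceptable.

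The gap is in the uniqueness part, at its last step. Your moving-plane scheme is only claimed to yield that $u$ is radially symmetric and monotone decreasing about some point $x_0$, and you then assert that this, ``together with the explicit description obtained in the first part,'' forces $u$ to be a bubble. It does not: knowing that bubbles solve \eqref{eq:1.10} and that $u$ is radial about some point does not identify $u$ with a bubble, because for this nonlocal system there is no ODE shooting or phase-plane argument giving uniqueness of positive radial solutions; that uniqueness is essentially the content of the theorem, so your plan is circular precisely at the hard point. The symmetry method must itself output the profile. This is why the paper uses the moving \emph{spheres} method on the Kelvin transforms $v^b$ of $u$ (Section 3): for every $b$ it produces a finite critical radius $\lambda_b>0$ with $u(x)=\frac{\lambda_b^{N-2}}{|x-b|^{N-2}}\,u\big(\frac{\lambda_b^2(x-b)}{|x-b|^2}+b\big)$, and a calculus/asymptotic-expansion lemma (matching the coefficients of $|x|^{2-N}$ and of $x|x|^{-N}$ as $|x|\to\infty$, together with the Li--Zhu-type Lemma \ref{t 3.5} ruling out $\lambda_b=0$) converts this family of identities into the explicit formula for $u$. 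If you insist on moving planes, you would need the analogous step: run the argument on the Kelvin transforms about \emph{all} centers and invoke the corresponding calculus lemma, not merely radial symmetry about one point. Finally, your ``delicate ingredient (i)'' (sharp decay $u\sim|x|^{2-N}$ before inverting) is unnecessary and is exactly what working on the Kelvin transform is designed to avoid: $v^b$ decays like $|x|^{2-N}$ automatically, and the paper starts the spheres from infinity using only the smallness of $\int_{B_{1/\lambda}}u^{2N/(N-2)}$, with no a priori decay of $u$.
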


Our proof of the existence part in Theorem \ref{thm:1} is  elementary and straightforward in contrast with variational approaches used in \cite{YW} and \cite{ZZ} etc. The approach for the uniqueness
is based on the moving spheres method. Similar result for double critical problems was presented in \cite{YZZ}.

Using Theorem \ref{thm:1}, we prove that there is a positive solution for problem \eqref{eq:1.1}.
\begin{theorem}\label{thm:2} Suppose $N\geq 4$ and $2<p<22^*_{min}$, then $m_{\lambda,\Omega}$ is achieved and  problem \eqref{eq:1.1} possesses at least a positive solution.

\end{theorem}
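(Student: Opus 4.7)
The plan is to combine a Nehari-manifold / mountain-pass argument with the threshold compactness level $m(\mathbb{R}^N)$ furnished by Theorem \ref{thm:1}. Introduce the Nehari manifold
\[
\mathcal{M}_{\lambda,\Omega}=\{u\in H^1_0(\Omega)\setminus\{0\}:\langle I'_{\lambda,\Omega}(u),u\rangle=0\},
\]
and set $m_{\lambda,\Omega}=\inf_{u\in\mathcal{M}_{\lambda,\Omega}}I_{\lambda,\Omega}(u)$. Since $2<p<22^*_{min}\leq 22^*_i$ for each $i$, a routine fibering argument on $t\mapsto I_{\lambda,\Omega}(tu)$ shows that $I_{\lambda,\Omega}$ has the mountain-pass geometry, that every $u\in H^1_0(\Omega)\setminus\{0\}$ admits a unique $t(u)>0$ with $t(u)u\in\mathcal{M}_{\lambda,\Omega}$ realising $\max_{t\geq 0}I_{\lambda,\Omega}(tu)$, and that $m_{\lambda,\Omega}$ coincides with the mountain-pass level.

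Next I would establish a global compactness decomposition in the style of Struwe for $I_{\lambda,\Omega}$: any $(PS)_c$ sequence $\{u_n\}\subset H^1_0(\Omega)$ decomposes, up to a subsequence, as $u_n = u_0 + \sum_{j=1}^{\ell} w_n^j + o(1)$ in $H^1_0(\Omega)$, where $u_0$ is a weak limit solving \eqref{eq:1.1} and each $w_n^j$ is a rescaling–translation of a nontrivial solution of the limit problem \eqref{eq:1.10}. The nonlocal Brezis--Lieb lemma, applied coordinate by coordinate to each Choquard nonlinearity (as in \cite{GY, MS}), shows that the energetic contribution of every such bubble is at least $m(\mathbb{R}^N)$. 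Consequently, whenever $c<m(\mathbb{R}^N)$ no bubble can appear and $u_n\to u_0$ strongly; thus $I_{\lambda,\Omega}$ satisfies $(PS)_c$ for every $c\in(0,m(\mathbb{R}^N))$.

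The crux of the argument is then the strict estimate $m_{\lambda,\Omega}<m(\mathbb{R}^N)$. Fix $x_0\in\Omega$ and a cut-off $\phi\in C_c^\infty(\Omega)$ with $\phi\equiv 1$ near $x_0$, and use the explicit ground state from Theorem \ref{thm:1},
\[
U_\varepsilon(x)=C\Bigl(\frac{\varepsilon}{\varepsilon^2+|x-x_0|^2}\Bigr)^{\frac{N-2}{2}},
\]
to form $u_\varepsilon=\phi U_\varepsilon$. Combining the Aubin--Talenti estimate for the Dirichlet norm with the Hardy--Littlewood--Sobolev asymptotics for each of the $k$ nonlocal terms yields
\[
\max_{t\geq 0}\Bigl\{\tfrac{t^2}{2}\|u_\varepsilon\|^2 -\sum_{i=1}^k \tfrac{t^{22^*_i}}{22^*_i}\iint\tfrac{u_\varepsilon^{2^*_i}(x)\,u_\varepsilon^{2^*_i}(y)}{|x-y|^{N-\alpha_i}}\,dx\,dy\Bigr\}= m(\mathbb{R}^N)+O(\varepsilon^{N-2}).
\]
A direct computation, on the other hand, gives $\lambda\int_\Omega u_\varepsilon^p\,dx \geq c_\lambda\,\varepsilon^{N-(N-2)p/2}$, and the hypothesis $p>2$ together with $N\geq 4$ makes $N-(N-2)p/2<N-2$. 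Hence for $\varepsilon$ sufficiently small the subcritical term dominates the error and $\max_{t\geq 0} I_{\lambda,\Omega}(tu_\varepsilon)<m(\mathbb{R}^N)$, so $m_{\lambda,\Omega}<m(\mathbb{R}^N)$.

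Putting the three steps together, Ekeland's variational principle on $\mathcal{M}_{\lambda,\Omega}$ produces a $(PS)_{m_{\lambda,\Omega}}$ sequence, and the compactness step furnishes a minimiser $u\in\mathcal{M}_{\lambda,\Omega}$. Since the functional is written with $u_+$, testing the equation against $u_-$ yields $u\geq 0$, and the strong maximum principle then gives $u>0$ in $\Omega$. The main technical obstacle will be the strict energy estimate: the nonlocal double integrals of the truncated bubble must be controlled with enough precision so that the Aubin--Talenti correction of order $\varepsilon^{N-2}$ can be absorbed by the single lower-order term $\lambda\int u_\varepsilon^p$, and this is exactly what the assumption $p>2$ (through $\varepsilon^{N-(N-2)p/2}\gg \varepsilon^{N-2}$) secures.
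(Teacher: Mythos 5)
Your proposal is correct and follows essentially the same route as the paper: equivalence of the Nehari level with the mountain-pass level, a $(PS)_c$ condition for $c<m(\mathbb{R}^N)$ via the nonlocal Br\'ezis--Lieb lemma, and the strict estimate $m_{\lambda,\Omega}<m(\mathbb{R}^N)$ obtained by testing with the truncated explicit bubble of Theorem \ref{thm:1}, where the subcritical gain of order $\varepsilon^{N-\frac{(N-2)p}{2}}$ beats the $O(\varepsilon^{N-2})$ cut-off error because $p>2$ and $N\geq 4$. The only difference is that you invoke a full Struwe-type profile decomposition, which is more than is needed (and nontrivial to prove for $k$ nonlocal critical terms); the paper gets the same compactness conclusion by simply splitting off the weak limit and projecting $v_n=u_n-u_0$ onto the Nehari manifold of the limit functional.
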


Next, we consider the existence of multiple solutions of problem \eqref{eq:1.1} linking with the topology of the domain $\Omega$.
The study of the existence of positive solutions in non-contractible was initiated by Kazdan and Warner \cite{KW} and  Coron  \cite{C}.
Actually, the existence and the multiplicity of solutions of
\begin{equation}\label{eq:1.14}
\left\{\begin{aligned}
-\Delta u +&\lambda u=u^{q-1}\quad {\rm in}\quad \Omega, \\
&u>0\quad {\rm in}\quad \Omega, \\
&u\in H^1_0(\Omega)\\
\end{aligned}\right.
\end{equation}
linking to the topology of domain $\Omega$ have been obtained in \cite{C} for $\lambda=0$ and $q=2^*=\frac{2N}{N-2},\, N\geq 3$ provided $\Omega$ is non-contractible. Later on,
it was proved in \cite{BaC} that critical problem \eqref{eq:1.14} has a positive solution if $H_d(\Omega, \mathbb{Z}_2)\not=0$ for some $d$. In critical and near critical cases,  it is shown in \cite{AD, BC, BC1,R, W} that problem \eqref{eq:1.14} possesses at least $cat_\Omega(\Omega)$ positive solutions. The subcritical problem of \eqref{eq:1.14} was studied in  \cite{CP,CP1} in  exterior domains,  $cat_\Omega(\Omega)$ number positive high energy solutions was found by the Ljusternik-Schnirelman theory.

Multiple positive solutions for the nonlinear Choquard problem
\begin{equation}\label{eq:1.15}
\left\{\begin{aligned}
-\Delta u +\lambda u& =(|x|^{-(N-\alpha)}*|u|^q)|u|^{q-2}u\quad {\rm in}\quad \Omega, \\
&u\in H^1_0(\Omega)\\
\end{aligned}\right.
\end{equation}
are obtained in \cite{GhP} related to the Ljusternik-Schnirelman category $cat_\Omega(\Omega)$ of domain $\Omega$. It was proved that if  $\Omega$ is bounded and $q$ is closed to the critical exponent $2^*_\alpha = \frac{N+\alpha}{N-2}$, problem \eqref{eq:1.15} possesses at least $cat_\Omega(\Omega)$ positive solutions; similar results were obtained in \cite{Go}  for the critical case $q= 2^*_\alpha$. Moreover, positive high energy solutions
was also found in \cite{GS} for  the critical case in an annular domain. It seems that for the multi-critical problem \eqref{eq:1.1}, no multiple solutions linking to the topology of the domain $\Omega$ might be found in literatures. Our  result in Theorem \ref{thm:1} allows us to find such a result as follows.

\begin{theorem}\label{thm:3}Suppose $N\geq 4$ and $2<p<22^*_{min}$, then there exists $\lambda^*>0$ such that for $0<\lambda<\lambda^*$, problem \eqref{eq:1.1} possesses at least $cat_\Omega(\Omega)$ positive solutions.

\end{theorem}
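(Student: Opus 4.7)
The plan is to apply the Ljusternik-Schnirelman category theorem to the restriction of $I_{\lambda,\Omega}$ to its Nehari manifold $\mathcal{N}_{\lambda,\Omega}=\{u\in H^1_0(\Omega)\setminus\{0\}:\langle I'_{\lambda,\Omega}(u),u\rangle=0\}$, combined with a Benci-Cerami style barycenter argument that compares a low sublevel of $\mathcal{N}_{\lambda,\Omega}$ with the topology of $\Omega$. First I fix $r>0$ small enough that $\Omega_r^-=\{x\in\Omega:d(x,\partial\Omega)\geq r\}$ and $\Omega_r^+=\{x\in\mathbb{R}^N:d(x,\Omega)\leq r\}$ are both homotopy equivalent to $\Omega$, so that $\mathrm{cat}_{\Omega_r^+}(\Omega_r^-)=\mathrm{cat}_\Omega(\Omega)$. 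Because $2<p<2\cdot 2^*_{\min}$, the fibering map $t\mapsto I_{\lambda,\Omega}(tu)$ has a unique positive maximum for each nonzero $u\geq 0$, so $\mathcal{N}_{\lambda,\Omega}$ is a natural $C^1$ constraint on which constrained critical points are free critical points. Together with Theorem \ref{thm:2} and the identification of the bubble profile in Theorem \ref{thm:1}, the standard concentration-compactness argument already developed to prove Theorem \ref{thm:2} yields the $(PS)_c$ condition for $I_{\lambda,\Omega}|_{\mathcal{N}_{\lambda,\Omega}}$ for every $c\in(0,m(\mathbb{R}^N))$.

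Next I build the two comparison maps. For $y\in\Omega_r^-$ and small $\varepsilon>0$, set
\[
U_{\varepsilon,y}(x)=\phi_r(x-y)\left(\frac{\varepsilon}{\varepsilon^2+|x-y|^2}\right)^{(N-2)/2},
\]
where $\phi_r$ is a smooth radial cutoff supported in $B_r(0)$ and equal to $1$ on $B_{r/2}(0)$, and define $\Phi_{\lambda,\varepsilon}(y)=t_{\lambda,\varepsilon}(y)\,U_{\varepsilon,y}$ with $t_{\lambda,\varepsilon}(y)>0$ the unique Nehari projection. A standard expansion, evaluating each nonlocal term on the explicit bubble of Theorem \ref{thm:1}, yields
\[
\max_{y\in\Omega_r^-}I_{\lambda,\Omega}(\Phi_{\lambda,\varepsilon}(y))=m(\mathbb{R}^N)+O(\varepsilon^{N-2})-\lambda\,C(p)\,\varepsilon^{N-(N-2)p/2}+o(\lambda),
\]
and since $N\geq 4$ and $p>2$, the negative subcritical contribution dominates the cutoff error once $\varepsilon$ is coupled to $\lambda$ appropriately (e.g.\ $\varepsilon=\lambda^\theta$ for a suitable $\theta>0$). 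Hence there exist $\lambda^*>0$, a choice $\varepsilon=\varepsilon(\lambda)$, and a level $c_\lambda<m(\mathbb{R}^N)$ so that $\Phi_{\lambda,\varepsilon}(\Omega_r^-)\subset\mathcal{N}_{\lambda,\Omega}^{c_\lambda}:=\mathcal{N}_{\lambda,\Omega}\cap\{I_{\lambda,\Omega}\leq c_\lambda\}$ for all $\lambda\in(0,\lambda^*)$.

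On the other side, define the barycenter
\[
\beta(u)=\frac{\int_\Omega x\,|\nabla u|^2\,dx}{\int_\Omega|\nabla u|^2\,dx}.
\]
The key step — and the one I expect to be the main technical obstacle — is proving that $\beta(\mathcal{N}_{\lambda,\Omega}^{c_\lambda})\subset\Omega_r^+$ provided $c_\lambda$ is strictly below $m(\mathbb{R}^N)$ and $\lambda$ is sufficiently small. This will require a quantitative concentration lemma: any sequence $u_n\in\mathcal{N}_{\lambda,\Omega}$ with $I_{\lambda,\Omega}(u_n)\leq c_\lambda$ must, via a Brezis-Lieb decomposition for each of the $k$ nonlocal terms, either converge strongly or split off a single rescaled bubble of the form $C(\varepsilon_n/(\varepsilon_n^2+|\cdot-y_n|^2))^{(N-2)/2}$ with $y_n\to\bar y\in\overline\Omega$; the decay estimate $|\nabla U(x)|\leq C|x|^{1-N}$ recorded in \eqref{eq:1.8} then forces the first moment of $|\nabla u_n|^2$ to concentrate near $\bar y$, placing $\beta(u_n)$ in $\Omega_r^+$ for large $n$. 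The delicate point is that $k$ critical Choquard nonlinearities coexist at the same scaling, but because Theorem \ref{thm:1} shows all of them achieve their common best constant on the same Aubin-Talenti family, the one-bubble dichotomy survives and only a single concentration point can appear.

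Finally, the symmetry of $U_{\varepsilon,y}$ about $y$ gives $\beta(\Phi_{\lambda,\varepsilon}(y))=y+o_\varepsilon(1)$ uniformly for $y\in\Omega_r^-$, so the straight-line homotopy $H(s,y)=(1-s)\beta(\Phi_{\lambda,\varepsilon}(y))+sy$ stays inside $\Omega_r^+$ for $\varepsilon$ small. Consequently $\beta\circ\Phi_{\lambda,\varepsilon}$ is homotopic in $\Omega_r^+$ to the inclusion $\Omega_r^-\hookrightarrow\Omega_r^+$, so $\mathrm{cat}(\mathcal{N}_{\lambda,\Omega}^{c_\lambda})\geq\mathrm{cat}_{\Omega_r^+}(\Omega_r^-)=\mathrm{cat}_\Omega(\Omega)$. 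Since $I_{\lambda,\Omega}|_{\mathcal{N}_{\lambda,\Omega}}$ is bounded below and satisfies $(PS)_c$ for every $c\leq c_\lambda<m(\mathbb{R}^N)$, the Ljusternik-Schnirelman theorem produces at least $\mathrm{cat}_\Omega(\Omega)$ critical points of $I_{\lambda,\Omega}$ in $\mathcal{N}_{\lambda,\Omega}^{c_\lambda}$, each of which is a positive solution of \eqref{eq:1.1} via the standard $u_+$ truncation argument.
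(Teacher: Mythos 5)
Your overall scheme coincides with the paper's: work on the Nehari manifold $\mathcal{N}_{\lambda,\Omega}$, use the $(PS)_c$ condition below $m(\mathbb{R}^N)$ (Lemma \ref{lem:3.2}), and compare a low sublevel set with $\Omega$ through a barycenter map and an inclusion map, concluding by Ljusternik--Schnirelman theory. Two of your implementation choices differ from the paper and are acceptable in principle: you use truncated Aubin--Talenti bubbles projected onto $\mathcal{N}_{\lambda,\Omega}$ with an energy expansion and a coupling $\varepsilon=\lambda^{\theta}$, while the paper simply translates the radial minimizer of $m(\lambda,r)=m_{\lambda,B_r(0)}$ (which exists by Theorem \ref{thm:2} on the ball, and lies below $m(\mathbb{R}^N)$ by Lemma \ref{lem:3.3}), so that the inclusion map lands exactly at level $m(\lambda,r)$ and $\beta\circ\gamma=id$ with no $o_\varepsilon(1)$ error and no $\varepsilon$--$\lambda$ coupling; and you take a Dirichlet-energy barycenter instead of the nonlocal barycenter \eqref{eq:4.2}, which is a harmless variation.

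The genuine gap is in the step you yourself flag as the main obstacle: showing $\beta(u)\in\Omega_r^+$ for every $u$ in the sublevel set. Your justification is a ``one-bubble dichotomy'' obtained ``via a Brezis--Lieb decomposition,'' with the single-bubble structure defended only by the remark that all $k$ critical Hardy--Littlewood--Sobolev terms are optimized by the same Aubin--Talenti family. That is an assertion, not an argument: nothing in the sketch rules out vanishing, dichotomy, or several coexisting profiles, and this multi-critical profile decomposition is precisely what the paper states is hard to carry out and deliberately replaces. The paper's substitute is Proposition \ref{prop:3.1}: for a sequence on the limit Nehari manifold $\mathcal{M}_{\Omega}$ with $J_\Omega(u_n)\to m(\mathbb{R}^N)$, a lower bound on one nonlocal term plus the improved Sobolev inequality \eqref{eq:4.0d} in the Morrey space $\mathcal{L}^{2,N-2}$ produces the scales $\gamma_n$ and centers $x_n\in\Omega$ directly; strong convergence of the rescaled sequence to $U$ then follows from Lemma \ref{lem:3.1} and the uniqueness in Theorem \ref{thm:1}, and $\gamma_n\to 0$, $x_n\to x_0\in\bar\Omega$ are deduced afterwards. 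You would need an argument of this strength. A second omission: for fixed $\lambda$, an arbitrary sequence in $\mathcal{N}_{\lambda,\Omega}$ with $I_{\lambda,\Omega}\le c_\lambda$ is neither a minimizing nor a $(PS)$ sequence, so even a correct concentration lemma for near-minimizers does not apply to it as stated. The paper resolves this in Lemma \ref{lem:4.2} by arguing by contradiction along $\lambda_n\to 0$: from $m(\lambda_n,\Omega)\le I_{\lambda_n,\Omega}(u_n)\le m(\lambda_n,r)<m(\mathbb{R}^N)$ and the vanishing of the subcritical term, the projection $s_nu_n$ onto $\mathcal{M}_{\Omega}$ becomes a genuine minimizing sequence for $m(\mathbb{R}^N)$, to which Proposition \ref{prop:3.1} applies. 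Your write-up never uses the lower bound over $\mathcal{N}_{\lambda,\Omega}$ nor the projection onto the limit Nehari manifold, and without them the claimed inclusion $\beta(\mathcal{N}_{\lambda,\Omega}^{c_\lambda})\subset\Omega_r^+$ does not follow. (Minor point: the decay \eqref{eq:1.8} you cite pertains to problem \eqref{eq:1.6}, not to \eqref{eq:1.10}; for the present problem the relevant information is the explicit form of $U$ in Theorem \ref{thm:1}.)
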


We remark that one might easily imagine similar multiple results hold for problem \eqref{eq:1.6}, but it will encounter essential difficulties in doing so.

In the proof of Theorem \ref{thm:3}, a key ingredient is to show that the barycenter of functions in certain level set belonging to a neighborhood of $\Omega$. In general, this is done by the concentration-compactness principle. Such an argument seems hard to apply to multi-critical problems. We use improved Sobolev inequalities related to the Morrey space to replace the concentration-compactness principle, we prove essentially that any minimizing sequence of $m(\mathbb{R}^N)$ has a subsequence strongly converging in $D^{1,2}(\mathbb{R}^N)$ up to translations and dilations.

\bigskip

This paper is organized as follows. We first study the limit problem in Sections 2 and 3, then the existence of a positive solution is proved in Section 4. Multiple results are established in Section 5.

\bigskip

 \section{The Limit problem: existence}

\bigskip

In this section, we will show that the limit problem \eqref{eq:1.10} has a ground state solution. By a ground state of  equation \eqref{eq:1.10} we mean a solution that
minimizes the associated  functional $I(u)$ defined in \eqref{eq:1.12} among all nontrivial solutions. Furthermore, we prove the uniqueness of positive solutions for
equation \eqref{eq:1.10} up to translations and dilations, and work out explicitly form of the solution.

\bigskip

\begin{lemma}\label{lem:2.1}  The function
\begin{equation} \label{eq:2.6}
f(t) = \frac 12 t-\sum_{i=1}^k\frac 1{22^*_i}\bigg(\frac t{S_i}\bigg)^{2^*_i}
\end{equation}
has a unique maximum point, where $S_i =S_{\alpha_i}$ defined in \eqref{eq:1.3}.
\end{lemma}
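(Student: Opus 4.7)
The plan is to establish strict concavity of $f$ on $(0,\infty)$ and then combine this with the boundary behavior at $0$ and $+\infty$ to locate a unique global maximum. This is essentially a one-variable calculus exercise, so the main thing to check is that the standing hypotheses on $\alpha_i$ and $N$ provide the exponents needed to make concavity work.

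First, I would use the assumption $N\ge 4$ and $\alpha_i>N-4$ to observe that
\[
2^*_i=\frac{N+\alpha_i}{N-2}>\frac{2N-4}{N-2}=2,\qquad i=1,\dots,k,
\]
so in particular $2^*_i-1>1$ and the power function $t\mapsto t^{2^*_i}$ is smooth and strictly convex on $[0,\infty)$. Differentiating $f$ termwise yields
\[
f'(t)=\frac12-\frac12\sum_{i=1}^{k}\frac{t^{2^*_i-1}}{S_i^{2^*_i}},\qquad
f''(t)=-\frac12\sum_{i=1}^{k}\frac{(2^*_i-1)\,t^{2^*_i-2}}{S_i^{2^*_i}}.
\]
Since each $S_i>0$ and each $2^*_i-1>0$, every summand in $f''(t)$ is positive for $t>0$, hence $f''(t)<0$ on $(0,\infty)$. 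Thus $f$ is strictly concave on $(0,\infty)$, and can have at most one critical point there.

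Second, I would produce the critical point. One has $f'(0^+)=\tfrac12>0$, while each of the $k$ terms $t^{2^*_i-1}$ with $2^*_i-1>0$ tends to $+\infty$ as $t\to+\infty$, so $f'(t)\to-\infty$. By continuity and the intermediate value theorem there is $t^*\in(0,\infty)$ with $f'(t^*)=0$; uniqueness of $t^*$ follows from the strict concavity established above. The sign information $f'>0$ on $(0,t^*)$ and $f'<0$ on $(t^*,\infty)$ shows that $f$ is strictly increasing then strictly decreasing, so $t^*$ is the unique global maximiser on $[0,\infty)$ (and in particular $f(t^*)>f(0)=0$).

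There is really no substantive obstacle; the only point to check carefully is the uniform lower bound $2^*_i>1$ (so the nonlinear sum is genuinely superlinear near $+\infty$ and its derivative vanishes at $0$), which is guaranteed by the hypothesis $\alpha_i>N-4$ imposed throughout the paper.
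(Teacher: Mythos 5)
Your proof is correct and follows essentially the same elementary calculus route as the paper: the paper obtains existence of a maximizer from the boundary behavior of $f$ and deduces uniqueness from the first-order condition $1=\sum_{i=1}^k S_i^{-2^*_i}t^{2^*_i-1}$, whose strict monotonicity in $t$ is exactly the strict decrease of $f'$ that you make explicit via $f''<0$. Your version simply spells out (via strict concavity, the sign of $f'$, and $2^*_i>2$ from $\alpha_i>N-4$) the monotonicity step the paper leaves implicit, so there is nothing to correct.
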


\begin{proof} Since $f(t)>0$ if $t>0$ small and $f(t)\to-\infty$ if $t\to+\infty$, there exists $t_0\in (0, +\infty)$ such that
\[
f(t_0) =\max_{t\geq 0}f(t)
\]
and
\begin{equation} \label{eq:2.7}
1 = \sum_{i=1}^k S_i^{-2^*_i}t_0^{2^*_i-1}.
\end{equation}

Now we show the maximum point is unique. Suppose on the contrary, there would exist another maximum point $t_1\in (0, +\infty)$ of $f(t)$ satisfying
\begin{equation} \label{eq:2.8}
1 = \sum_{i=1}^k S_i^{-2^*_i}t_1^{2^*_i-1}.
\end{equation}
If $t_1\not = t_0$, we find a contradiction from \eqref{eq:2.7} and \eqref{eq:2.8}. The proof is complete.

\end{proof}

\bigskip
We consider the minimization problem $m(\mathbb{R}^N)$ defined in \eqref{eq:1.11}. We may prove as Lemma \ref{lem:2.1} that for any $u\in D^{1,2}(\mathbb{R}^N)$, there is a unique $t_u>0$ such that $t_u u\in \mathcal{M}_{\mathbb{R}^N}$. This enables us to show as \cite{W}
that
\begin{equation} \label{eq:2.5}
m(\mathbb{R}^N)= \inf_{u\not = 0}\max_{t>0}J(tu)=\inf_{\gamma\in\Gamma}\max_{t\in[0,1]}J(\gamma(t)),
\end{equation}
where
\[
\Gamma:=\{\gamma\in C([0,1], D^{1,2}(\mathbb{R}^N)): \gamma(0) =0, J(\gamma(1)) \leq 0, \gamma(1)\not = 0\}.
\]
Moreover, we may verify that there exists $c>0$ such that for every $u\in \mathcal{M}_{\mathbb{R}^N}$,  there holds $c\leq\|u\|_{ D^{1,2}(\mathbb{R}^N)}$, and  $m(\mathbb{R}^N)>0$.

It is known from \cite{GY} that $S_{\alpha}$ defined in \eqref{eq:1.3} is achieved if and only if
\begin{equation} \label{eq:2.9}
u_{\lambda,\xi}(x) = C\bigg(\frac \lambda {\lambda^2+|x-\xi|^2}\bigg)^{\frac {N-2}2},
\end{equation}
where $\lambda\in(0,+\infty)$ and $\xi\in \mathbb{R}^N$.   Based on Lemma \ref{lem:2.1} and \eqref{eq:2.9}, we have the following existence result.
\begin{proposition}\label{prop:2.1} There is a positive constant $C_0>0$ such that the minimization problem $m(\mathbb{R}^N)$ defined in \eqref{eq:1.11} is achieved by the function
\begin{equation} \label{eq:2.10}
V_{\varepsilon,\xi}(x) = C_0\bigg(\frac \varepsilon {\varepsilon^2+|x-\xi|^2}\bigg)^{\frac {N-2}2}
\end{equation}
for $\varepsilon>0$. Alternatively, $V_{\varepsilon,\xi}$ is a ground state solution  of problem \eqref{eq:1.10}.
\end{proposition}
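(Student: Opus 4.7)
The plan is to realize $m(\mathbb{R}^N)$ via the minimax characterization \eqref{eq:2.5}, and then to pin down the extremizer using the fact that every Aubin--Talenti profile appearing in \eqref{eq:2.9} is a simultaneous extremizer of all the HLS--Sobolev constants $S_{\alpha_1},\dots,S_{\alpha_k}$. Together with Lemma \ref{lem:2.1}, this reduces the variational problem to an algebraic identity.

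First I would establish a universal lower bound. Fix $u\in D^{1,2}(\mathbb{R}^N)\setminus\{0\}$. The definition of $S_i=S_{\alpha_i}$ in \eqref{eq:1.3} gives
\[
\int_{\mathbb{R}^N}\int_{\mathbb{R}^N}\frac{|u(x)|^{2^*_i}|u(y)|^{2^*_i}}{|x-y|^{N-\alpha_i}}\,dx\,dy\leq S_i^{-2^*_i}\|u\|_{D^{1,2}}^{2\cdot 2^*_i}
\]
for each $i$. Setting $\tau=t^{2}\|u\|_{D^{1,2}}^{2}$ and substituting into $J(tu)$ yields $J(tu)\geq f(\tau)$, with $f$ the function from Lemma \ref{lem:2.1}. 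Because $t\mapsto\tau$ is a bijection of $(0,\infty)$ onto itself, $\sup_{t>0}J(tu)\geq\sup_{\tau>0}f(\tau)=f(t_0)$; combined with \eqref{eq:2.5} this gives $m(\mathbb{R}^N)\geq f(t_0)$.

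Next I would saturate this bound on the Aubin--Talenti family. The characterization \eqref{eq:2.9} of extremizers of $S_\alpha$ is valid for every admissible $\alpha$, and the extremal profile $(\lambda/(\lambda^{2}+|x-\xi|^{2}))^{(N-2)/2}$ does not depend on $\alpha$, so a single such function realizes equality in every one of the HLS inequalities above. Taking $u=u_{\varepsilon,\xi}$ therefore turns $J(tu)\geq f(\tau)$ into $J(tu)=f(\tau)$, whence $\sup_{t>0}J(tu)=f(t_0)$ and $m(\mathbb{R}^N)=f(t_0)$. Now pick $C_0>0$ so that $V_{\varepsilon,\xi}=C_0(\varepsilon/(\varepsilon^{2}+|x-\xi|^{2}))^{(N-2)/2}$ satisfies $\|V_{\varepsilon,\xi}\|_{D^{1,2}}^{2}=t_0$; since the $D^{1,2}$-norm of $(\varepsilon/(\varepsilon^{2}+|x-\xi|^{2}))^{(N-2)/2}$ is independent of $(\varepsilon,\xi)$ by scaling, $C_0$ is a universal constant. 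The identity \eqref{eq:2.7} for $t_0$, multiplied through by $t_0$, is precisely the Nehari condition $\langle J'(V_{\varepsilon,\xi}),V_{\varepsilon,\xi}\rangle=0$ expressed in terms of $\|V_{\varepsilon,\xi}\|_{D^{1,2}}^{2}=t_0$, so $V_{\varepsilon,\xi}\in\mathcal{M}_{\mathbb{R}^N}$ and $J(V_{\varepsilon,\xi})=f(t_0)=m(\mathbb{R}^N)$.

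Finally I would verify that $V_{\varepsilon,\xi}$ actually solves \eqref{eq:1.10}. Since it minimizes $J$ on the Nehari manifold, the Lagrange multiplier rule gives $\mu\in\mathbb{R}$ with $J'(V_{\varepsilon,\xi})=\mu\,G'(V_{\varepsilon,\xi})$, where $G(u)=\langle J'(u),u\rangle$. Testing against $V_{\varepsilon,\xi}$ and using that
\[
\langle G'(v),v\rangle=2\sum_{i=1}^{k}(1-2^*_i)\int_{\mathbb{R}^N}\int_{\mathbb{R}^N}\frac{v^{2^*_i}(x)v^{2^*_i}(y)}{|x-y|^{N-\alpha_i}}\,dx\,dy<0
\]
on $\mathcal{M}_{\mathbb{R}^N}$ (since every $2^*_i>2$) forces $\mu=0$, so $J'(V_{\varepsilon,\xi})=0$ and $V_{\varepsilon,\xi}$ is a ground state solution of \eqref{eq:1.10}. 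The only nonroutine point in the whole argument is the simultaneous saturation of all $k$ HLS--Sobolev inequalities by one Aubin--Talenti profile in the middle paragraph; this is inherited directly from \eqref{eq:2.9} together with the scale invariance of the quotient defining $S_\alpha$.
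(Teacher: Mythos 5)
Your proposal is correct and follows essentially the same route as the paper: a lower bound $m(\mathbb{R}^N)\geq f(t_0)$ from the HLS--Sobolev constants via Lemma \ref{lem:2.1}, saturation of all $k$ inequalities simultaneously by the Aubin--Talenti profile of \eqref{eq:2.9} to get equality, and Nehari membership of the suitably normalized profile from the identity \eqref{eq:2.7} (equivalently $f'(t_0)=0$). The only difference is that you make explicit the standard Lagrange-multiplier step showing the Nehari minimizer is a critical point of $J$, which the paper leaves implicit; this is a harmless addition, not a different argument.
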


\begin{proof} Denote $U(x) = u_{1,0}$, where $u_{1,0}$ is defined in \eqref{eq:2.9}. Since $I(u)$ is invariant under translations and dilations, we  need only to find a minimizer of $m(\mathbb{R}^N)$ in the form of $U(x)$. Observing that
\[
m(\mathbb{R}^N) = \inf_{u\not = 0}\max_{t>0}J(tu) = \inf_{\|u\|_{D^{1,2}(\mathbb{R}^N)} = 1}\max_{t>0}J(\sqrt{t}u)
\]
for $u\in D^{1,2}(\mathbb{R}^N)$, we deduce from the Hardy-Littlewood-Sobolev inequality that
\[
J(\sqrt{t}u)\geq \frac 12 t\int_{\mathbb{R}^N}|\nabla u|^2\,dx - \sum_{i=1}^k\frac {t^{2^*_i}}{22^*_i}\bigg(\frac 1 {S_i}\int_{\mathbb{R}^N}|\nabla u|^2\,dx\bigg)^{2^*_i}=f(t).
\]
Thus,
\[
\max_{t>0}J(\sqrt{t}u)\geq \max_{t>0}f(t)=f(t_0)
\]
implying
\[
m(\mathbb{R}^N)\geq f(t_0).
\]

On the other hand, we may choose $\tilde C>0$ such that $V=\tilde CU$ satisfying $\|V\|_{D^{1,2}(\mathbb{R}^N)} = 1$.
Noting that $S_i=S_{\alpha_i}$ is achieved by $U$, we have that
\begin{equation} \label{eq:2.11}
\begin{split}
J(\sqrt{t}V)&= \frac 12 t\int_{\mathbb{R}^N}|\nabla V|^2\,dx - \sum_{i=1}^k\frac {t^{2^*_i}\tilde C^{22^*_i}}{22^*_i}\bigg(\frac 1 {S_i}\int_{\mathbb{R}^N}|\nabla U|^2\,dx\bigg)^{2^*_i}\\
&= \frac 12 t\int_{\mathbb{R}^N}|\nabla V|^2\,dx - \sum_{i=1}^k\frac {t^{2^*_i}}{22^*_i}\bigg(\frac 1 {S_i}\int_{\mathbb{R}^N}|\nabla V|^2\,dx\bigg)^{2^*_i}\\
&=f(t).\\
\end{split}
\end{equation}
Hence,
\[
\max_{t>0}J(\sqrt{t}V) =f(t_0),
\]
which yields
\[
m(\mathbb{R}^N) = \inf_{\|u\|_{D^{1,2}(\mathbb{R}^N)} = 1}\max_{t>0}J(\sqrt{t}u)\leq f(t_0).
\]
Consequently,
\[
m(\mathbb{R}^N) =f(t_0) = J(\sqrt{t_0}V).
\]
Let $V_0(x)=\sqrt{t_0}V(x)=\sqrt{t_0}\tilde C U(x)$. By the fact
\[
0=f'(t_0)=\frac 12 -\sum_{i=1}^k\frac 1{2} {S_i}^{-2^*_i}t_0^{2^*_i-1},
\]
and $\|V\|_{D^{1,2}(\mathbb{R}^N)} = 1$, we obtain as \eqref{eq:2.11} that
\[\begin{split}
0 &=\int_{\mathbb{R}^N}t_0|\nabla V|^2\,dx -\sum_{i=1}^k  t_0^{2^*_i}\bigg({\frac 1{S_i}\int_{\mathbb{R}^N}|\nabla V|^2\,dx}\bigg)^{2^*_i}\\
&=\int_{\mathbb{R}^N}|\nabla V_0|^2\,dx -\sum_{i=1}^k  t_0^{2^*_i}\tilde C^{22^*_i}\bigg({\frac 1{S_i}\int_{\mathbb{R}^N}|\nabla U|^2\,dx}\bigg)^{2^*_i}\\
&=\int_{\mathbb{R}^N}|\nabla V_0|^2\,dx -\sum_{i=1}^k \int_{\mathbb{R}^N}\int_{\mathbb{R}^N}\frac{V_0^{2^*_i}(x)V_0^{2^*_i}(y)}{|x-y|^{N-\alpha_i}}\,dxdy,\\
\end{split}
\]
that is, $V_0\in \mathcal{M}_{\mathbb{R}^N}$. This means that $V_0$ is a minimizer of $m(\mathbb{R}^N)$, the assertion follows.
\end{proof}

\bigskip

\section {The limit problem: uniqueness}

\bigskip

In this section, we prove the uniqueness of positive solution for the limit problem \eqref{eq:1.10} up to translations and dilations, the argument used is based on  the moving spheres method. Denote
$$
{w ^i}(x) = \int_{{\mathbb R^N}} {\frac{{u{{(y)}^{\frac{{N + \alpha_i }}
{{N - 2}}}}}}
{{|x - y{|^{N - {\alpha _i}}}}}dy}, \quad i= 1,\cdot\cdot\cdot, k,
$$
then \eqref{eq:1.10} can be written as
\begin{equation}\label{1.2}
 - \Delta u(x) = \sum\limits_{i = 1}^k {{w^i}(x)u{{(x)}^{\frac{{2 + {\alpha _i}}}
{{N - 2}}}}} \quad {\rm in }\quad \mathbb R^N.
\end{equation}
The moving spheres method can not be directly applied to $u$ since no decay law of $u$ at infinity is known.  So we turn to use the moving spheres method on the Kelvin transformation of $u$. The Kelvin transformations of $u$ and $w ^i$ are defined as
\begin{equation*}
v(x) = \frac{1}
{{|x{|^{N - 2}}}}u\big(\frac{x}
{{|x{|^2}}}\big),\ {z^i}(x) = \frac{1}
{{|x{|^{N - {\alpha _i}}}}}{w^i}\big(\frac{x}
{{|x{|^2}}}\big),\quad i= 1,\cdot\cdot\cdot, k.
\end{equation*}
We remark that the solution $u$ of \eqref{eq:1.10} belongs to $C^2(\mathbb R^N)$, this can be done by the same argument in \cite{YZZ}. Therefore,  $v(x)$ and $z^i(x)$ decay at infinity as $|x|^{2-N}$  and  $|x|^{\alpha _i-N}$ respectively. It can be verified that $v(x)$ and $z^i(x)$ satisfy
\begin{equation}\label{1.3}
\left\{ \begin{gathered}
 {z^i}(x) = \int_{{\mathbb R^N}} {\frac{{v{{(y)}^{\frac{{N + {\alpha _i}}}{{N - 2}}}}}}{{|x - y{|^{N - {\alpha _i}}}}}dy} \qquad \qquad\qquad  {\rm in} \ {\mathbb R^N}\backslash \{ 0\},  \\
  - \Delta v(x) = \sum\limits_{i = 1}^k {{z^i}(x)v{{(x)}^{\frac{{2 + {\alpha _i}}}{{N - 2}}}}}\qquad\quad\quad {\rm in}\ {\mathbb R^N}\backslash \{ 0\}.  \\
 \end{gathered} \right.
\end{equation}
We can also verify that the reflection functions
$$
 {v_\lambda }(x) = {(\frac{\lambda }{{|x|}})^{N - 2}}v(\frac{{{\lambda ^2}x}}{{|x{|^2}}}) = \frac{1}{{{\lambda ^{N - 2}}}}u(\frac{x}{{{\lambda ^2}}})
 $$
 and
 $$
 z_\lambda ^i(x) = {(\frac{\lambda }{{|x|}})^{N - {\alpha _i}}}z(\frac{{{\lambda ^2}x}}{{|x{|^2}}}) = \frac{1}{{{\lambda ^{N - {\alpha _i}}}}}{w^i}(\frac{x}{{{\lambda ^2}}}),
$$
of $v(x)$ and $z^i(x)$ with respect to $\partial B_\lambda(0)$  satisfy
\begin{equation}\label{11.4}
\left\{ \begin{gathered}
 z_\lambda ^i(x) = \int_{{\mathbb R^N}} {\frac{{{v_\lambda }{{(y)}^{\frac{{N + {\alpha _i}}}{{N - 2}}}}}}{{|x - y{|^{N - {\alpha _i}}}}}} dy\qquad\quad\quad\quad\qquad {\rm in}\ {\mathbb R^N}, \\
  - \Delta {v_\lambda }(x) = \sum\limits_{i = 1}^k {z_\lambda ^i(x)} {v_\lambda }{(x)^{\frac{{2 + {\alpha _i}}}{{N - 2}}}} \qquad\quad\quad {\rm in}\ {\mathbb R^N}.\\
 \end{gathered} \right.
\end{equation}

The spirit of the moving spheres method is to compare the value of functions with their reflections with respect to the sphere. For this purpose, we have the following result.
\begin{lemma}\label{Lemma 1.1}
Suppose that $z^i(x)$ and $z^i_\lambda(x)$ satisfy equation \eqref{1.3} and equation \eqref{11.4} respectively, then we have
\begin{equation}\label{11.5}
z_\lambda ^i(x) - {z^i}(x) = \int_{{B_\lambda }} {[\frac{1}{{|x - y{|^{N - {\alpha _i}}}}} - \frac{1}{{|x - \frac{{{\lambda ^2}y}}{{|y{|^2}}}{|^{N - {\alpha _i}}}}}{\big(\frac{\lambda }{|y|}\big)^{N - {\alpha _i}}}][{v_\lambda }{{(y)}^{\frac{{N + {\alpha _i}}}{{N - 2}}}} - v{{(y)}^{\frac{{N + {\alpha _i}}}{{N - 2}}}}]} dy.
\end{equation}
\end{lemma}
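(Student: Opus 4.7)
The plan is to express $z_\lambda^i(x)-z^i(x)$ as a single integral over $\mathbb{R}^N$ (using the two representations in \eqref{1.3} and \eqref{11.4}) and to fold the exterior region $\mathbb{R}^N\setminus B_\lambda$ back onto $B_\lambda$ by means of the Kelvin substitution $y=y'^{*}:=\lambda^2 y'/|y'|^2$. Setting $q:=\frac{N+\alpha_i}{N-2}$, I would start from
\[
z_\lambda^i(x)-z^i(x)=\int_{\mathbb{R}^N}\frac{v_\lambda(y)^{q}-v(y)^{q}}{|x-y|^{N-\alpha_i}}\,dy
\]
and split it as $\int_{B_\lambda}+\int_{\mathbb{R}^N\setminus B_\lambda}$; the second piece will be rewritten as an integral over $B_\lambda$ via the substitution $y=y'^{*}$, whose Jacobian is $dy=(\lambda/|y'|)^{2N}\,dy'$.

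Three identities drive the calculation. First, the defining relation for $v_\lambda$ is Kelvin-involutive, so
\[
v_\lambda(y'^{*})=(|y'|/\lambda)^{N-2}v(y'),\qquad v(y'^{*})=(|y'|/\lambda)^{N-2}v_\lambda(y'),
\]
and combining with $q(N-2)=N+\alpha_i$ yields
\[
v_\lambda(y)^{q}-v(y)^{q}=(|y'|/\lambda)^{N+\alpha_i}\bigl[v(y')^{q}-v_\lambda(y')^{q}\bigr].
\]
Second, the conformal identity $|a^{*}-b^{*}|=\lambda^2|a-b|/(|a||b|)$ applied to $a=x^{*}$, $b=y'$ gives
\[
|x-y'^{*}|=\frac{|x|}{|y'|}\,|x^{*}-y'|,\qquad x^{*}:=\lambda^2 x/|x|^2,
\]
and hence $|x-y|^{-(N-\alpha_i)}=(|y'|/|x|)^{N-\alpha_i}|x^{*}-y'|^{-(N-\alpha_i)}$. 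Third, rearranging the same identity produces
\[
\frac{(\lambda/|y|)^{N-\alpha_i}}{|x-y^{*}|^{N-\alpha_i}}=\frac{\lambda^{N-\alpha_i}}{|x|^{N-\alpha_i}|x^{*}-y|^{N-\alpha_i}},
\]
which will re-express the output of the change of variables in the form appearing in \eqref{11.5}.

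Plugging these three ingredients into the exterior integral, the powers of $|y'|$ collect as $(N+\alpha_i)+(N-\alpha_i)-2N=0$ and cancel completely, while the surviving $\lambda$- and $|x|$-factors assemble into $\lambda^{N-\alpha_i}|x|^{-(N-\alpha_i)}|x^{*}-y'|^{-(N-\alpha_i)}$. Using the third identity to convert this back into $(\lambda/|y'|)^{N-\alpha_i}|x-y'^{*}|^{-(N-\alpha_i)}$, absorbing the sign from $v(y')^{q}-v_\lambda(y')^{q}=-(v_\lambda(y')^{q}-v(y')^{q})$, and relabeling $y'$ as $y$, I arrive at
\[
\int_{\mathbb{R}^N\setminus B_\lambda}\frac{v_\lambda(y)^{q}-v(y)^{q}}{|x-y|^{N-\alpha_i}}\,dy=-\int_{B_\lambda}\frac{(\lambda/|y|)^{N-\alpha_i}\bigl[v_\lambda(y)^{q}-v(y)^{q}\bigr]}{|x-\lambda^2 y/|y|^2|^{N-\alpha_i}}\,dy.
\]
Adding this to the $B_\lambda$-piece of $z_\lambda^i-z^i$ produces exactly the bracketed kernel of \eqref{11.5}. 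The identity is ultimately pure algebra driven by the conformal symmetry of the Riesz kernel under Kelvin reflection across $\partial B_\lambda(0)$, so the main obstacle is notational rather than conceptual: one must keep a clean track of which of $y$, $y'$, $y^{*}$ appears in each factor and verify that the powers of $|y'|$ coming from numerator, denominator, and Jacobian exactly balance so that no lingering $|y'|$-dependence survives.
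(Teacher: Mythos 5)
Your proposal is correct and follows essentially the same strategy as the paper's proof: split the defining integral of $z_\lambda^i - z^i$ into $B_\lambda$ and its complement, fold the exterior region onto $B_\lambda$ by the Kelvin substitution $y\mapsto \lambda^2 y/|y|^2$ (Jacobian $(\lambda/|y|)^{2N}$), and use the involutive relations $v_\lambda(y^*)=(|y|/\lambda)^{N-2}v(y)$ and $v(y^*)=(|y|/\lambda)^{N-2}v_\lambda(y)$ to swap $v_\lambda^q$ and $v^q$, producing the minus sign and the factor $(\lambda/|y|)^{N-\alpha_i}$. The one cosmetic difference is that you take a small detour through $x^*=\lambda^2 x/|x|^2$: you first rewrite $|x-y^*|^{-(N-\alpha_i)}$ via the conformal distortion formula as $(|y|/|x|)^{N-\alpha_i}|x^*-y|^{-(N-\alpha_i)}$ and then convert back, whereas the paper simply carries the kernel $|x-\lambda^2 y/|y|^2|^{-(N-\alpha_i)}$ unchanged through the substitution and lets the Jacobian together with the power $(|y|/\lambda)^{N+\alpha_i}$ coming from the numerator collapse to $(\lambda/|y|)^{N-\alpha_i}$. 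Your $x^*$ round-trip is harmless (the two identities you invoke are equivalent forms of the same conformal relation), but it is unnecessary; the power bookkeeping $2N-(N+\alpha_i)=N-\alpha_i$ already produces the factor in the statement without ever mentioning $x^*$.
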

\begin{proof}
The result is a direct consequence of  \eqref{1.3} and  \eqref{11.4}. Indeed, we deduce from  \eqref{1.3} and  \eqref{11.4} that
\begin{equation*}
\begin{split}
  &z_\lambda ^i(x) - {z^i}(x)  \\
   &= \int_{{\mathbb{R}^N}} {\frac{{{v_\lambda }{{(y)}^{\frac{{N + {\alpha _i}}}
{{N - 2}}}}}}
{{|x - y{|^{N - {\alpha _i}}}}}} dy - \int_{{\mathbb{R}^N}} {\frac{{v{{(y)}^{\frac{{N + {\alpha _i}}}
{{N - 2}}}}}}
{{|x - y{|^{N - {\alpha _i}}}}}} dy  \\
   &= \int_{{B_\lambda }} {\frac{{{v_\lambda }{{(y)}^{\frac{{N + {\alpha _i}}}
{{N - 2}}}}}}
{{|x - y{|^{N - {\alpha _i}}}}}} dy + \int_{B_\lambda^C} {\frac{{{v_\lambda }{{(y)}^{\frac{{N + {\alpha _i}}}
{{N - 2}}}}}}
{{|x - y{|^{N - {\alpha _i}}}}}} dy  - \int_{{B_\lambda }} {\frac{{v{{(y)}^{\frac{{N + {\alpha _i}}}
{{N - 2}}}}}}
{{|x - y{|^{N - {\alpha _i}}}}}} dy - \int_{B_\lambda^C} {\frac{{v{{(y)}^{\frac{{N + {\alpha _i}}}
{{N - 2}}}}}}
{{|x - y{|^{N - {\alpha _i}}}}}} dy  \\
   &= \int_{{B_\lambda }} {\frac{{{v_\lambda }{{(y)}^{\frac{{N + {\alpha _i}}}
{{N - 2}}}}}}
{{|x - y{|^{N - {\alpha _i}}}}}} dy + \int_{{B_\lambda }} {\frac{{{v_\lambda }{{(\frac{{{\lambda ^2}y}}
{{|y{|^2}}})}^{\frac{{N + {\alpha _i}}}
{{N - 2}}}}}}
{{|x - \frac{{{\lambda ^2}y}}
{{|y{|^2}}}{|^{N - {\alpha _i}}}}}} \frac{{{\lambda ^{2N}}}}
{{|y{|^{2N}}}}dy  \\
   &- \int_{{B_\lambda }} {\frac{{v{{(y)}^{\frac{{N + {\alpha _i}}}
{{N - 2}}}}}}
{{|x - y{|^{N - {\alpha _i}}}}}} dy - \int_{{B_\lambda }} {\frac{{v{{(\frac{{{\lambda ^2}y}}
{{|y{|^2}}})}^{\frac{{N + {\alpha _i}}}
{{N - 2}}}}}}
{{|x - \frac{{{\lambda ^2}y}}
{{|y{|^2}}}{|^{N - {\alpha _i}}}}}\frac{{{\lambda ^{2N}}}}
{{|y{|^{2N}}}}} dy \hfill \\
   &= \int_{{B_\lambda }} {\frac{{{v_\lambda }{{(y)}^{\frac{{N + {\alpha _i}}}
{{N - 2}}}}}}
{{|x - y{|^{N - {\alpha _i}}}}}} dy + \int_{{B_\lambda }} {\frac{{v{{(y)}^{\frac{{N + {\alpha _i}}}
{{N - 2}}}}}}
{{|x - \frac{{{\lambda ^2}y}}
{{|y{|^2}}}{|^{N - {\alpha _i}}}}}} \frac{{{\lambda ^{N - {\alpha _i}}}}}
{{|y{|^{N - {\alpha _i}}}}}dy \\
   &- \int_{{B_\lambda }} {\frac{{v{{(y)}^{\frac{{N + {\alpha _i}}}
{{N - 2}}}}}}
{{|x - y{|^{N - {\alpha _i}}}}}} dy - \int_{{B_\lambda }} {\frac{{{v_\lambda }{{(y)}^{\frac{{N + {\alpha _i}}}
{{N - 2}}}}}}
{{|x - \frac{{{\lambda ^2}y}}
{{|y{|^2}}}{|^{N - {\alpha _i}}}}}{(\frac{\lambda }
{{|y|}})^{N - {\alpha _i}}}} dy  \\
  & = \int_{{B_\lambda }} {[\frac{1}
{{|x - y{|^{N - {\alpha _i}}}}}}  - \frac{1}
{{|x - \frac{{{\lambda ^2}y}}
{{|y{|^2}}}{|^{N - {\alpha _i}}}}}{(\frac{\lambda }
{{|y|}})^{N - {\alpha _i}}}][{v_\lambda }{(y)^{\frac{{N + {\alpha _i}}}
{{N - 2}}}} - {v_\lambda }{(y)^{\frac{{N + {\alpha _i}}}
{{N - 2}}}}]dy.
\end{split}
\end{equation*}
This completes the proof of Lemma \ref{Lemma 1.1}.
\end{proof}

\bigskip

To compare the values of $v(x)$ and $v_\lambda(x)$ in $B_\lambda(0)$, we need the following result.
\begin{lemma}\label{Lemma 1.2}
Suppose that $v(x)$ and $z^i(x)$ satisfy  \eqref{1.3} and \eqref{11.4} respectively, then we have the following inequality
\begin{equation}\label{11.9}
\begin{split}
  {\int_{{B_\lambda }} {|\nabla {{({v_\lambda }(x) - v(x))}^ + }|} ^2}dx &\le C[\sum\limits_{i = 1}^k {||} {v_\lambda }(x)||_{{L^{\frac{{2N}}
{{N - 2}}}}({B_\lambda })}^{\frac{{4 + 2{\alpha _i}}}
{{N - 2}}} + \sum\limits_{i = 1}^k {||} z_\lambda ^i|{|_{{L^{\frac{{2N}}
{{N - {\alpha _i}}}}}({B_\lambda })}} ||{v_\lambda }(x)||_{{L^{\frac{{2N}}
{{N - 2}}}}({B_\lambda })}^{\frac{{4 + {\alpha _i} - N}}
{{N - 2}}}]  \\
 &\times {\int_{{B_\lambda }} {|\nabla {{({v_\lambda }(x) - v(x))}^ + }|} ^2}dx.
\end{split}
\end{equation}
\end{lemma}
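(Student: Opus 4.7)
The plan is to test the difference of the equations satisfied by $v_\lambda$ and $v$ against the natural competitor $w := (v_\lambda - v)^+$. Write $q_i := (2+\alpha_i)/(N-2)$ and $q_i^* := (N+\alpha_i)/(N-2)$; the hypothesis $N-4 < \alpha_i < N$ yields $q_i, q_i^* > 1$, which will be used repeatedly. Since $v_\lambda = v$ on $\partial B_\lambda(0)$, and since $v_\lambda$ is bounded near $0$ whereas $v(x) \sim |x|^{2-N}$ there, one has $w \in H^1_0(B_\lambda)$ with $w$ vanishing in a neighborhood of $0$; integration by parts on the subtracted equations from \eqref{1.3} and \eqref{11.4} is therefore legitimate and gives
$$\int_{B_\lambda}|\nabla w|^2\,dx = \sum_{i=1}^k \int_{B_\lambda}\bigl[z_\lambda^i v_\lambda^{q_i} - z^i v^{q_i}\bigr]w\,dx.$$

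I would decompose each summand as $z_\lambda^i v_\lambda^{q_i} - z^i v^{q_i} = z_\lambda^i(v_\lambda^{q_i} - v^{q_i}) + (z_\lambda^i - z^i)v^{q_i} =: A_i + B_i$, and estimate the two pieces separately. For $A_i$, the mean value theorem gives $v_\lambda^{q_i} - v^{q_i} \le q_i v_\lambda^{q_i-1}w$ on $\{w>0\}$, so $\int A_i w \le C\int z_\lambda^i v_\lambda^{q_i-1}w^2$; the three-factor H\"older inequality with exponents $\tfrac{2N}{N-\alpha_i}, \tfrac{2N}{\alpha_i-N+4}, \tfrac{N}{N-2}$ (whose reciprocals sum to one) combined with Sobolev then produces exactly the second term of \eqref{11.9}. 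For $B_i$, since $v^{q_i}w \ge 0$, I first pass to $(z_\lambda^i - z^i)^+$ and invoke Lemma \ref{Lemma 1.1}, observing that the integrand's kernel is nonnegative on $B_\lambda \times B_\lambda$ (a consequence of the identity $|x - \lambda^2 y/|y|^2|^2 - (\lambda/|y|)^2|x-y|^2 = (\lambda^2-|x|^2)(\lambda^2/|y|^2-1) \ge 0$ for $|x|,|y|<\lambda$) and dominated by $|x-y|^{-(N-\alpha_i)}$. Applying MVT to the outer nonlinearity (using $q_i^*>1$), together with the coincidence $q_i^*-1 = q_i$ and the pointwise bound $v(x)^{q_i} \le v_\lambda(x)^{q_i}$ on $\operatorname{supp} w$, yields
$$\int B_i w \le C\iint_{B_\lambda\times B_\lambda}\frac{v_\lambda^{q_i}(y)w(y)\,v_\lambda^{q_i}(x)w(x)}{|x-y|^{N-\alpha_i}}\,dx\,dy.$$
A single application of the Hardy--Littlewood--Sobolev inequality with exponent $2N/(N+\alpha_i)$ followed by H\"older gives the first term of \eqref{11.9}, and Sobolev converts every appearance of $\|w\|_{L^{2N/(N-2)}}^2$ into $S^{-1}\int_{B_\lambda}|\nabla w|^2$.

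The principal obstacle is the $B_i$ piece, where one must marry the nonlocal representation from Lemma \ref{Lemma 1.1} with the pointwise factor $v^{q_i}$ and reorganize the resulting quadruple integral into an HLS/H\"older pairing; the accounting works only because the range $N-4<\alpha_i<N$ simultaneously renders $q_i>1$ (so MVT applies to the inner nonlinearity), the H\"older exponent $2N/(\alpha_i-N+4)$ finite and positive, and $q_i^*-1 = q_i$ the exact exponent needed to close the HLS estimate. Once these ingredients are in place, summing over $i$ and collecting constants produces \eqref{11.9}.
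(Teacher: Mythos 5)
Your core estimates reproduce the paper's argument: testing the subtracted equations against $(v_\lambda-v)^+$, splitting the nonlinearity into the two pieces (your $A_i+B_i$ is the same splitting as the paper's case analysis, up to which factor is majorized by its $\lambda$-reflection), controlling the $A_i$-piece by the three-factor H\"older with exponents $\tfrac{2N}{N-\alpha_i},\tfrac{2N}{\alpha_i-N+4},\tfrac{N}{N-2}$, and controlling the $B_i$-piece via Lemma \ref{Lemma 1.1}, the pointwise bound $0\le$ kernel difference $\le |x-y|^{-(N-\alpha_i)}$, the mean value theorem with $\tfrac{N+\alpha_i}{N-2}-1=\tfrac{2+\alpha_i}{N-2}$, and Hardy--Littlewood--Sobolev, followed by Sobolev. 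All of that is correct and is exactly how the paper closes the estimate.

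The gap is in your justification of the testing step at the origin. You claim $v(x)\sim|x|^{2-N}$ near $0$ and conclude that $w=(v_\lambda-v)^+$ vanishes in a neighborhood of $0$. But at this stage no decay law of $u$ at infinity is known (that is precisely why the Kelvin transform is introduced), and the transform only gives $v(x)=|y|^{N-2}u(y)$ with $|y|=1/|x|$: this is bounded above by $C|x|^{2-N}$ and, by positivity and superharmonicity of $u$, bounded below by a positive constant, but it need not blow up. Indeed for the standard bubble $u(x)=(1+|x|^2)^{-(N-2)/2}$ one has $v\equiv u$, which is bounded at $0$, and for small $\lambda$ one gets $v_\lambda(0)=\lambda^{2-N}u(0)>v(0)$, so $w>0$ near the origin; since the lemma is stated (and later used) for a range of $\lambda$, the assertion that $w$ vanishes near $0$ fails in general, and with it your claim that the integration by parts is ``therefore legitimate'' — one must still rule out a contribution at the origin (a possible Dirac mass in $-\Delta v$, or lack of integrability of $\nabla v$ there). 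The paper handles this by the cut-off $\eta_\varepsilon$, testing with $\varphi_\varepsilon=\eta_\varepsilon^2(v_\lambda-v)^+$ and showing the error term $I_\varepsilon\to0$; alternatively a B\^ocher-type dichotomy (either the singularity of $v$ at $0$ is removable, in which case direct testing is fine, or $v\to\infty$ at $0$, in which case $w$ does vanish near $0$) would also close the gap, but your proposal supplies neither, so this step needs to be repaired before the rest of your (otherwise correct) computation applies.
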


\begin{proof}To deal with the possible singularity of $v$ at zero, we first define a cut-off function
\begin{equation*}
{\eta _\varepsilon }(x) = {\eta _\varepsilon }(|x|) = \left\{ \begin{gathered}
  0 \qquad \qquad {\rm for}\  x \in {B_\varepsilon }, \hfill \\
  1\qquad \qquad {\rm for}\ x \in B_{2\varepsilon }^c \hfill \\
\end{gathered}  \right.
\end{equation*}
with $|\nabla \eta | \le \frac{2}
{\varepsilon }$ in ${B_{2\varepsilon }}\backslash {B_\varepsilon }$.
Let
\begin{equation*}
  \varphi _\varepsilon (x) = \eta _\varepsilon ^2{({v_\lambda }(x) - v(x))^+,  \quad \psi  _\varepsilon }(x) = \eta _\varepsilon ({v_\lambda }(x) - v(x))^ +,
\end{equation*}
then we have
$$
  |\nabla {\psi _\varepsilon }(x)|^2 = \nabla {({v_\lambda }(x) - v(x)) }\nabla {\varphi _\varepsilon }{\text{ + [}}{({v_\lambda }(x) - v(x))^ + }{]^2}|\nabla {\eta _\varepsilon }{|^2}.
  $$
 We estimate
  \begin{equation}\label{11.6}
\begin{split}  &\int_{{B_\lambda }\setminus {B_{2\varepsilon }}} {|\nabla {{({v_\lambda }(x) - v(x))}^ + }} {|^2}dx  \\
   &\le \int_{{B_\lambda }} {|\nabla {\psi _\varepsilon }(x){|^2}} dx \hfill \\
   &= \int_{{B_\lambda }} {\nabla {{({v_\lambda }(x) - v(x))} }} \nabla {\varphi _\varepsilon }(x)dx + \int_{{B_\lambda }} {{[{({v_\lambda }(x) - v(x))}^ +]^2 }} |\nabla {\eta _\varepsilon }{|^2}dx \\
  & =  - \int_{{B_\lambda }} {\Delta ({v_\lambda }(x) - v(x))} {\varphi _\varepsilon }(x)dx + {I_\varepsilon } \\
   &\le \int_{{B_\lambda }} {\sum\limits_{i = 1}^k {[z_\lambda ^i(x){v_\lambda }{{(x)}^{\frac{{2 + {\alpha _i}}}
{{N - 2}}}} - {z^i}(x)v{{(x)}^{\frac{{2 + {\alpha _i}}}
{{N - 2}}}}]} {{({v_\lambda }(x) - v(x))}^ + }} dx + {I_\varepsilon },
\end{split}
\end{equation}
where $I_\varepsilon=\int_{{B_\lambda }} {{[{({v_\lambda }(x) - v(x))}^ +]^2 }} |\nabla {\eta _\varepsilon }{|^2}dx.$
We claim that $I_\varepsilon\to 0$ as $\varepsilon \to 0$. In fact,
  \begin{equation*}
\begin{split}  {I_\varepsilon } &\le {\Big(\int_{{B_{2\varepsilon }}} {[{{({v_\lambda }(y) - v(x))}^ + }} ]^{\frac{{2N}}
{{N - 2}}}}dx{\Big)^{\frac{{N - 2}}
{N}}}  {\Big(\int_{{B_{2\varepsilon }}} {|\nabla \eta {|^N}dx} \Big)^{\frac{2}
{N}}}\\
&\leq \Big(\int_{B_{\frac{2\varepsilon}{\lambda^2}}}u(x)^{\frac{2N}{N-2}}\,dx\Big)
^{\frac{{N - 2}}
{N}}  {\Big(\int_{{B_{2\varepsilon }}} {|\nabla \eta {|^N}dx} \Big)^{\frac{2}
{N}}}\to 0\\
\end{split}
\end{equation*}
as $\varepsilon  \to 0$.

Next, we estimate $z_\lambda ^i(x){v_\lambda }{(x)^{\frac{{2 + {\alpha _i}}}
{{N - 2}}}} - {z^i}(x)v{(x)^{\frac{{2 + {\alpha _i}}}
{{N - 2}}}} $
in $B_\lambda$. If $z_\lambda ^i(x) \le {z^i}(x)$, apparently we have
\begin{equation}\label{11.7}
z_\lambda ^i(x){v_\lambda }{(x)^{\frac{{2 + {\alpha _i}}}
{{N - 2}}}} - {z^i}(x)v{(x)^{\frac{{2 + {\alpha _i}}}
{{N - 2}}}} \le z_\lambda ^i(x)[{v_\lambda }{(x)^{\frac{{2 + {\alpha _i}}}
{{N - 2}}}} - v{(x)^{\frac{{2 + {\alpha _i}}}
{{N - 2}}}}].
\end{equation}
While if $z_\lambda ^i(x) > {z^i}(x)$, there holds
\begin{equation}\label{11.8}
\begin{split}
  &z_\lambda ^i(x){v_\lambda }{(x)^{\frac{{2 + {\alpha _i}}}
{{N - 2}}}} - {z^i}(x)v{(x)^{\frac{{2 + {\alpha _i}}}
{{N - 2}}}}  \\
&=[z_\lambda ^i(x) - {z^i}(x)]{v_\lambda }{(x)^{\frac{{2 + {\alpha _i}}}
{{N - 2}}}} + z ^i(x)[{v_\lambda }{(x)^{\frac{{2 + {\alpha _i}}}
{{N - 2}}}} - v{(x)^{\frac{{2 + {\alpha _i}}}
{{N - 2}}}}]\\
&\leq [z_\lambda ^i(x) - {z^i}(x)]^+{v_\lambda }{(x)^{\frac{{2 + {\alpha _i}}}
{{N - 2}}}} + z ^i(x)[{v_\lambda }{(x)^{\frac{{2 + {\alpha _i}}}
{{N - 2}}}} - v{(x)^{\frac{{2 + {\alpha _i}}}
{{N - 2}}}}]^+\\
   &\le [z_\lambda ^i(x) - {z^i}(x)]^+{v_\lambda }{(x)^{\frac{{2 + {\alpha _i}}}
{{N - 2}}}} + z_\lambda ^i(x)[{v_\lambda }{(x)^{\frac{{2 + {\alpha _i}}}
{{N - 2}}}} - v{(x)^{\frac{{2 + {\alpha _i}}}
{{N - 2}}}}]^+ .
\end{split}
\end{equation}

Substituting  \eqref{11.7} and  \eqref{11.8} into  \eqref{11.6},  we get
\begin{equation*}
\begin{gathered}
 {\int_{{B_\lambda }\backslash {B_{2\varepsilon }}} {|\nabla {{({v_\lambda }(x) - v(x))}^ + }|} ^2}dx \hfill \\
   \le \sum\limits_{i = 1}^k {\int_{{B_\lambda }} {{{[z_\lambda ^i(x) - {z^i}(x)]}^ + }{v_\lambda }{{(x)}^{\frac{{2 + {\alpha _i}}}
{{N - 2}}}}{{({v_\lambda }(x) - v(x))}^ + }} dx}  \hfill \\
   + \sum\limits_{i = 1}^k {\int_{{B_\lambda }} {z_\lambda ^i(x){v_\lambda }{{(x)}^{\frac{{4 + {\alpha _i} - N}}
{{N - 2}}}}{{[{{({v_\lambda }(x) - v(x))}^ + }]}^2}} dx}  + {I_\varepsilon } \hfill \\
   \le \sum\limits_{i = 1}^k {\int_{{B_\lambda }}\int_{{B_\lambda }} {\frac{{{v_\lambda }{{(y)}^{\frac{{2 + {\alpha _i}}}
{{N - 2}}}}}}
{{|x - y{|^{N - {\alpha _i}}}}}{{[{v_\lambda }(y) - v(y)]}^ + }  {v_\lambda }{{(x)}^{\frac{{2 + {\alpha _i}}}
{{N - 2}}}}{{[{v_\lambda }(x) - v(x)]}^ + }} dx} dy \hfill \\
   + \sum\limits_{i = 1}^k {\int_{{B_\lambda }} {z_\lambda ^i(x){v_\lambda }{{(x)}^{\frac{{4 + {\alpha _i} - N}}
{{N - 2}}}}{{[{{({v_\lambda }(x) - v(x))}^ + }]}^2}dx} }  + {I_\varepsilon } \hfill \\
   \le \sum\limits_{i = 1}^k {||{v_\lambda }{{(y)}^{\frac{{2 + {\alpha _i}}}
{{N - 2}}}}{{({v_\lambda }(y) - v(y))}^ + }||} _{{L^{\frac{{2N}}
{{N + {\alpha _i}}}}}({B_\lambda })}^2 \hfill \\
   + {\sum\limits_{i = 1}^k {||z_\lambda ^i(x)||} _{{L^{\frac{{2N}}
{{N - \alpha_i}}}}({B_\lambda })}}  ||{v_\lambda }(x)||_{{L^{\frac{{2N}}
{{N - 2}}}}({B_\lambda })}^{\frac{{4 + {\alpha _i} - N}}
{{N - 2}}}||{({v_\lambda }(y) - v(y))^ + }||_{{L^{\frac{{2N}}
{{N - 2}}}}({B_\lambda })}^2 + {I_\varepsilon } \hfill \\
   \le \sum\limits_{i = 1}^k {||{v_\lambda }(x)||_{{L^{\frac{{2N}}
{{N - 2}}}}({B_\lambda })}^{\frac{{4 + 2{\alpha _i} }}
{{N - 2}}}||{{({v_\lambda }(x) - v(x))}^ + }||_{{L^{\frac{{2N}}
{{N - 2}}}}({B_\lambda })}^2}  \hfill \\
   + {\sum\limits_{i = 1}^k {||z_\lambda ^i(x)||} _{{L^{\frac{{2N}}
{{N - \alpha_i}}}}({B_\lambda })}}  ||{v_\lambda }(x)||_{{L^{\frac{{2N}}
{{N - 2}}}}({B_\lambda })}^{\frac{{4 + {\alpha _i} - N}}
{{N - 2}}}||{({v_\lambda }(x) - v(x))^ + }||_{{L^{\frac{{2N}}
{{N - 2}}}}({B_\lambda })}^2 + {I_\varepsilon }. \hfill \\
\end{gathered}
\end{equation*}
Using Sobolev inequality and letting $\varepsilon  \to 0$, we obtain
\begin{equation*}
\begin{split}
  {\int_{{B_\lambda }} {|\nabla {{({v_\lambda }(x) - v(x))}^ + }|} ^2}dx &\le C[\sum\limits_{i = 1}^k {||} {v_\lambda }(x)||_{{L^{\frac{{2N}}
{{N - 2}}}}({B_\lambda })}^{\frac{{4 + 2{\alpha _i}}}
{{N - 2}}} + \sum\limits_{i = 1}^k {||} z_\lambda ^i|{|_{{L^{\frac{{2N}}
{{N - {\alpha _i}}}}}({B_\lambda })}} ||{v_\lambda }(x)||_{{L^{\frac{{2N}}
{{N - 2}}}}({B_\lambda })}^{\frac{{4 + {\alpha _i} - N}}
{{N - 2}}}]  \\
 &\times {\int_{{B_\lambda }} {|\nabla {{({v_\lambda }(x) - v(x))}^ + }|} ^2}dx,
\end{split}
\end{equation*}
this completes the proof of Lemma \ref{Lemma 1.2}.
\end{proof}

\bigskip

Lemmas \ref{Lemma 1.1} and \ref{Lemma 1.2} are the maximum principles in an integral form, they can be used to replace the usual maximum principles in a differential form. In fact, if
$$
C[\sum\limits_{i = 1}^k {||} {v_\lambda }(x)||_{{L^{\frac{{2N}}
{{N - 2}}}}({B_\lambda })}^{\frac{{4 + 2{\alpha _i}}}
{{N - 2}}} + \sum\limits_{i = 1}^k {||} z_\lambda ^i|{|_{{L^{\frac{{2N}}
{{N - {\alpha _i}}}}}({B_\lambda })}} ||{v_\lambda }(x)||_{{L^{\frac{{2N}}
{{N - 2}}}}({B_\lambda })}^{\frac{{4 + {\alpha _i} - N}}
{{N - 2}}}]<1,
$$
we may infer from equation \eqref{11.9} that $v_\lambda(x)\leq v(x)$ in $B_\lambda\setminus \{0\}$.  By Lemma \ref{Lemma 1.1}, we have $z^i_\lambda(x)\leq z^i(x)$ in $B_\lambda\setminus \{0\}$. The conclusion is the same as that of the maximum principle.

Now, we  show that the spheres can be moved from infinity.
\begin{lemma}\label{Lemma 1.3}
There exists $\lambda>0$ large enough, such that
$$
  {v_\lambda }(x) \le v(x)\qquad {\rm in}\ {B_\lambda\setminus\{0\} }
  $$
  and
  $$
  z_\lambda ^i(x) \le {z^i}(x)\qquad {\rm in}\ {B_\lambda\setminus\{0\} } \ ,i=1,2,...,k.
$$
\end{lemma}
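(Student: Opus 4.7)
The strategy is to verify the smallness hypothesis under which the integral maximum principle of Lemma \ref{Lemma 1.2} forces $(v_\lambda-v)^+\equiv 0$ on $B_\lambda\setminus\{0\}$, and then translate this back to $z^i_\lambda\le z^i$ via the representation in Lemma \ref{Lemma 1.1}. Concretely, it suffices to prove that
\[
\Lambda(\lambda):=C\Big[\sum_{i=1}^k\|v_\lambda\|_{L^{\frac{2N}{N-2}}(B_\lambda)}^{\frac{4+2\alpha_i}{N-2}}+\sum_{i=1}^k\|z^i_\lambda\|_{L^{\frac{2N}{N-\alpha_i}}(B_\lambda)}\|v_\lambda\|_{L^{\frac{2N}{N-2}}(B_\lambda)}^{\frac{4+\alpha_i-N}{N-2}}\Big]\longrightarrow 0\quad\text{as }\lambda\to\infty.
\]
Once $\Lambda(\lambda)<1$, Lemma \ref{Lemma 1.2} yields $\|\nabla(v_\lambda-v)^+\|_{L^2(B_\lambda)}=0$, hence $v_\lambda\le v$ on $B_\lambda\setminus\{0\}$; feeding this into the representation formula \eqref{11.5} of Lemma \ref{Lemma 1.1} then gives $z^i_\lambda\le z^i$ on $B_\lambda\setminus\{0\}$ for each $i$, since the kernel in brackets there is nonnegative for $y\in B_\lambda$.

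The two norms are computed by a direct change of variables. Using $v_\lambda(x)=\lambda^{2-N}u(x/\lambda^2)$ and the substitution $y=x/\lambda^2$ (so that $B_\lambda$ in the $x$-variable corresponds to $B_{1/\lambda}$ in the $y$-variable), one finds
\[
\int_{B_\lambda}|v_\lambda(x)|^{\frac{2N}{N-2}}\,dx=\int_{B_{1/\lambda}}u(y)^{\frac{2N}{N-2}}\,dy,
\]
and analogously, using $z^i_\lambda(x)=\lambda^{\alpha_i-N}w^i(x/\lambda^2)$,
\[
\int_{B_\lambda}|z^i_\lambda(x)|^{\frac{2N}{N-\alpha_i}}\,dx=\int_{B_{1/\lambda}}w^i(y)^{\frac{2N}{N-\alpha_i}}\,dy.
\]
Both right-hand sides tend to zero as $\lambda\to\infty$, provided we know $u\in L^{2N/(N-2)}(\mathbb{R}^N)$ and $w^i\in L^{2N/(N-\alpha_i)}(\mathbb{R}^N)$. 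The first inclusion follows from the Sobolev embedding $D^{1,2}(\mathbb{R}^N)\hookrightarrow L^{2N/(N-2)}(\mathbb{R}^N)$ together with the fact that the ground state $u$ belongs to $D^{1,2}(\mathbb{R}^N)$; the second is a consequence of the Hardy--Littlewood--Sobolev inequality applied to $w^i=|x|^{-(N-\alpha_i)}\ast u^{(N+\alpha_i)/(N-2)}$, noting that $u^{(N+\alpha_i)/(N-2)}\in L^{2N/(N+\alpha_i)}(\mathbb{R}^N)$ precisely because $u\in L^{2N/(N-2)}(\mathbb{R}^N)$.

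Combining these decay estimates with the exponents $(4+2\alpha_i)/(N-2)>0$ and $(4+\alpha_i-N)/(N-2)>0$ (the latter being strictly positive thanks to the assumption $N-4<\alpha_i$), we see that every term in the definition of $\Lambda(\lambda)$ vanishes in the limit $\lambda\to\infty$. In particular, there exists $\lambda_0>0$ large such that $\Lambda(\lambda)<1$ for all $\lambda\ge\lambda_0$, and the conclusion of the lemma follows.

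The main technical point, and the only place any care is needed, is the verification that $(4+\alpha_i-N)/(N-2)>0$ so that the factor $\|v_\lambda\|_{L^{2N/(N-2)}(B_\lambda)}^{(4+\alpha_i-N)/(N-2)}$ actually decays rather than blows up; this is exactly where the hypothesis $\alpha_i>N-4$ in the statement of the problem is used. Apart from this, the argument is a standard application of the integral moving spheres scheme: Lemmas \ref{Lemma 1.1} and \ref{Lemma 1.2} encode the analogue of the maximum principle, and the change-of-variables computation furnishes the initial estimate needed to start the moving spheres procedure from $\lambda=+\infty$.
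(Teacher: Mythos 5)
Your proposal is correct and follows essentially the same route as the paper: the change of variables $y=x/\lambda^2$ turns the $L^{2N/(N-2)}(B_\lambda)$ norm of $v_\lambda$ and the $L^{2N/(N-\alpha_i)}(B_\lambda)$ norm of $z^i_\lambda$ into integrals of $u$ and $w^i$ over $B_{1/\lambda}$, which vanish as $\lambda\to\infty$, so the smallness condition of Lemma \ref{Lemma 1.2} holds and gives $v_\lambda\le v$, after which Lemma \ref{Lemma 1.1} yields $z^i_\lambda\le z^i$. Your extra remarks (integrability of $u$ and $w^i$, positivity of the exponents via $\alpha_i>N-4$, nonnegativity of the kernel in \eqref{11.5}) only make explicit details the paper leaves implicit.
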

\begin{proof}
By the definition of $v_\lambda$, we have
$$
  \int_{{B_\lambda }} {{v_\lambda }{{(x)}^{\frac{{2N}}
{{N - 2}}}}} dx = \int_{{B_{\frac{1}
{\lambda }}}} {u{{(y)}^{\frac{{2N}}
{{N - 2}}}}} dy \to 0\ \rm{as}\ \lambda  \to  + \infty
$$
and
$$
  \int_{{B_\lambda }} {z_\lambda ^i{{(x)}^{\frac{{2N}}
{{N - {\alpha _i}}}}}} dx = \int_{{B_{\frac{1}
{\lambda }}}} {{w^i}{{(y)}^{\frac{{2N}}
{{N - {\alpha _i}}}}}} dy \to 0\ \rm{as}\ \lambda  \to  + \infty
$$ for $i=1,2,...,k$.

Hence, we can choose $\lambda$ large enough, such that
\begin{equation*}
\begin{gathered}
 C[\sum\limits_{i = 1}^k {||{v_\lambda }(x)||_{{L^{\frac{{2N}}
{{N - 2}}}}({B_\lambda })}^{\frac{{4 + 2{\alpha _i}}}
{{N - 2}}} + } \sum\limits_{i = 1}^k {||z_\lambda ^i(x)|{|_{{L^{\frac{{2N}}
{{N - 2}}}}({B_\lambda })}}} ||{v_\lambda }(x)||_{{L^{\frac{{2N}}
{{N - 2}}}}({B_\lambda })}^{\frac{{4 + {\alpha _i} - N}}
{{N - 2}}}] < \frac{1}{2},\hfill \\
\end{gathered}
\end{equation*}
and Lemma \ref{Lemma 1.2} implies that $v_\lambda(x) \le v(x)$ in ${B_\lambda }\backslash \{ 0\} $. Moreover, we infer from Lemma \ref{Lemma 1.1} that $z_\lambda ^i(x) \le {z^i}(x)$ in ${B_\lambda }\backslash \{ 0\} $ for $i=1,2,...,k$.
\end{proof}
\bigskip
We note that the above process also works for
$${u^b}(x) = u(x + b)$$
with $b \in {\mathbb{R}^N}$. In particular, for any $b
\in {\mathbb{R}^N}$ and $\lambda$ large enough, we have
$$
  {v_\lambda^b }(x) \le v^b(x)\quad\qquad {\rm in}\ {B_\lambda }\backslash \{ 0\}
$$
and
$$
  z_\lambda ^{b,i}(x) \le {z^{b,i}}(x)\qquad \ {\rm in}\ {B_\lambda }\backslash \{ 0\}\  {\rm for} \ i=1,2,...,k,
$$
where $v^b(x)$ is the Kelvin transformation of $u^b(x)$, and $z_\lambda ^{b,i}(x)$ is defined in a similar way.

Let
\[{ \lambda_b} = \inf \{ \lambda |v_\mu ^b(x) \le {v^b}(x),z_\mu ^{b,i}(x) \le {z^{b,i}}(x)(i=1,2,...,k)\ {\rm in}\ {B_\mu }\setminus \{ 0\} \ {\rm with}\ \lambda  < \mu  <  + \infty \}. \]
\begin{lemma}\label{Lemma 1.4}
There exists $\bar b \in \mathbb{R}^N$, such that  $ \lambda_{\bar b}>0 $.
\end{lemma}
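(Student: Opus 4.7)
The strategy is to pick $\bar b$ to be any point of $\mathbb{R}^N$ (positivity of $u$ makes every choice work) and to show, for all sufficiently small $\mu > 0$, that the Kelvin--reflection inequality $v_\mu^{\bar b}(x) \le v^{\bar b}(x)$ fails at an explicit point $x \in B_\mu \setminus \{0\}$. By the very definition of $\lambda_{\bar b}$, such a failure at some small $\mu$ rules that $\mu$ out as an admissible value, which forces $\lambda_{\bar b} \ge \mu > 0$.

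The concrete probe is $x = \mu^2 \xi$ with a fixed unit vector $\xi$ and $\mu \in (0,1)$, so that $|x| = \mu^2 < \mu$ and $x$ indeed lies in $B_\mu \setminus \{0\}$. Direct substitution into the formulas $v_\mu^{\bar b}(x) = \mu^{2-N}\,u(x/\mu^2 + \bar b)$ and $v^{\bar b}(x) = |x|^{2-N}\,u(x/|x|^2 + \bar b)$ yields
\[
v_\mu^{\bar b}(\mu^2 \xi) = \mu^{2-N}\, u(\bar b + \xi), \qquad v^{\bar b}(\mu^2 \xi) = \mu^{4-2N}\, u(\bar b + \xi/\mu^2).
\]
As $\mu \to 0^+$, the first side blows up like $\mu^{2-N}$ because $u(\bar b + \xi)$ is a fixed positive number, whereas the second side stays bounded: invoking the decay $u(y) = O(|y|^{2-N})$ at infinity, one gets $u(\bar b + \xi/\mu^2) = O(\mu^{2(N-2)})$ and hence $v^{\bar b}(\mu^2 \xi) = O(1)$. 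The scaling mismatch therefore forces $v_\mu^{\bar b}(\mu^2 \xi) > v^{\bar b}(\mu^2 \xi)$ for all $\mu$ small enough, completing the argument.

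The main (and essentially only nontrivial) ingredient is the pointwise upper bound $u(y) = O(|y|^{2-N})$ at infinity; without it the two sides could scale the same way and the mismatch would evaporate. This bound is the standard decay for positive entire solutions of the multi-critical integral equation and follows from a Moser-type iteration applied to \eqref{1.2}, which is precisely the regularity/decay step already invoked in the paragraph preceding Lemma \ref{Lemma 1.1} to place $u$ in $C^2(\mathbb{R}^N)$. Once this decay is in hand the rest is bookkeeping, and the $z^i$-inequalities play no role in the obstruction since failure of the $v$-inequality alone is enough to exclude $\mu$ from the admissible set in the definition of $\lambda_{\bar b}$.
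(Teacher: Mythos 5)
Your proposal has a genuine gap: it leans on the pointwise decay $u(y) = O(|y|^{2-N})$ at infinity, but the paper explicitly states, in the paragraph introducing the Kelvin transform before Lemma \ref{Lemma 1.1}, that ``no decay law of $u$ at infinity is known.'' That remark is not incidental — it is the entire motivation for carrying out the moving--spheres argument on the Kelvin transform $v$ instead of on $u$ itself. What the paper does know at this stage is $u\in C^2(\mathbb{R}^N)$, i.e.\ local boundedness (in particular near the origin), which is what yields $v(x)=O(|x|^{2-N})$ as $|x|\to\infty$. Decay of $u$ at infinity is equivalent to boundedness of $v$ near the origin, and the paper is careful to treat $0$ as a possible singularity of $v$ (this is why the cut-off $\eta_\varepsilon$ appears in Lemma \ref{Lemma 1.2}). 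So your claim that the bound follows from ``a Moser-type iteration applied to \eqref{1.2}, which is precisely the regularity/decay step already invoked'' is not supported: Moser iteration gives interior $L^\infty$ estimates, not decay at infinity; the $C^2(\mathbb{R}^N)$ statement in the paper does not include a rate at infinity; and if the decay $u(y)=O(|y|^{2-N})$ were already available, one could run moving spheres directly on $u$ and much of Section~3 would be unnecessary.

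Without that decay your scaling computation collapses. Both sides at $x=\mu^2\xi$ are exact: $v_\mu^{\bar b}(\mu^2\xi)=\mu^{2-N}u(\bar b+\xi)$ and $v^{\bar b}(\mu^2\xi)=\mu^{4-2N}u(\bar b+\xi/\mu^2)$. If $u$ does not decay faster than constant, the second expression diverges like $\mu^{4-2N}$, which dominates $\mu^{2-N}$, and the inequality $v_\mu^{\bar b}\le v^{\bar b}$ could hold for every small $\mu$ — exactly the degenerate scenario Lemma \ref{t 3.5} is designed to detect. The paper's own proof sidesteps the decay entirely: it argues by contradiction, using Lemma \ref{t 3.5} to conclude that if $\lambda_b=0$ for every $b$ then $u$ is constant, which is incompatible with $u$ being a nontrivial (positive, finite-energy) solution. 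You would need to either establish the decay independently (in which case the argument is valid but no longer elementary, and arguably circular relative to the goal of the section), or adopt the paper's route through Lemma \ref{t 3.5}.
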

Before proving Lemma \ref{Lemma 1.4}, we establish a technical lemma.
\begin{lemma}\label{t 3.5}
Suppose $u\in C^1(\mathbb{R}^N)$, if for all $b\in \mathbb{R}^N$ and $\lambda>0$, the following inequality holds
 \begin{equation}\label{3.31}
\frac{1}{|x|^{N-2}}u_b\big(\frac{x}{|x|^2}\big)-\frac{1}{\lambda^{N-2}}u_b\big(\frac{x}{\lambda^2}\big)\geq0,\ \forall x\in B_\lambda\setminus\{0\},
\end{equation}
then we have $u(x)\equiv C$, where $u_b(x)=u(x+b)$.

\end{lemma}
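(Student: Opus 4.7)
The plan is to reinterpret hypothesis \eqref{3.31} as a one-parameter monotonicity statement for $u$ along rays issuing from an arbitrary basepoint $b$, and then to slide $b$ to infinity along the line through any two prescribed points to compare their $u$-values.

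First I would substitute $\xi=x/|x|^2$ and $\eta=x/\lambda^2$ into \eqref{3.31}. Setting $P=b+\xi$ and $Q=b+\eta$, the relations $|\xi|=1/|x|$ and $|\eta|=|x|/\lambda^2$ yield $|P-b|\,|Q-b|=1/\lambda^2$, while $P-b$ and $Q-b$ point in the same direction and $|Q-b|<1/\lambda<|P-b|$. Computing $|x|^{-(N-2)}=|P-b|^{N-2}$ and $\lambda^{-(N-2)}=(|P-b|\,|Q-b|)^{(N-2)/2}$, the inequality \eqref{3.31} becomes the clean rescaling statement
\[
|P-b|^{(N-2)/2}\,u(P)\;\ge\;|Q-b|^{(N-2)/2}\,u(Q),
\]
valid whenever $P,Q$ lie on a common ray from $b$ with $0<|Q-b|<|P-b|$. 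Equivalently, for every $b\in\mathbb{R}^N$ and every unit vector $\omega$, the function $r\mapsto r^{(N-2)/2}u(b+r\omega)$ is non-decreasing on $(0,\infty)$.

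Next I would fix two distinct points $P,Q\in\mathbb{R}^N$ and aim to show $u(P)=u(Q)$. To this end, slide the basepoint to infinity along the line $PQ$ by setting $b_t=Q+t(Q-P)$ for $t>0$. A short calculation gives $|P-b_t|=(1+t)|P-Q|$ and $|Q-b_t|=t|P-Q|$, with $P-b_t$ and $Q-b_t$ parallel. The rewritten inequality then produces $(1+t)^{(N-2)/2}u(P)\ge t^{(N-2)/2}u(Q)$, hence
\[
u(P)\;\ge\;\Bigl(\frac{t}{1+t}\Bigr)^{(N-2)/2}u(Q)\;\longrightarrow\;u(Q)\quad\text{as }t\to\infty.
\]
Symmetrically, taking $b_t'=P+t(P-Q)$ and repeating the computation with the roles of $P$ and $Q$ interchanged yields $u(Q)\ge u(P)$ in the limit, so $u(P)=u(Q)$. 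Since $P,Q$ were arbitrary, $u$ is constant.

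There is no real obstacle beyond bookkeeping: one only needs to keep track of which of the two points is nearer the moving basepoint when invoking the rewritten inequality, a role that flips when switching from $b_t$ to $b_t'$. I note that the argument actually uses only continuity of $u$; alternatively, the $C^1$ hypothesis could be exploited by differentiating the ray-monotonicity to obtain the pointwise inequality $\tfrac{N-2}{2}u(x)+\nabla u(x)\cdot(x-b)\ge 0$ and then letting $|b|\to\infty$ in complementary directions to conclude $\nabla u\equiv 0$.
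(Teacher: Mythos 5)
Your proof is correct, and it takes a genuinely different route from the paper's. The paper's argument works at the infinitesimal level: it fixes $\lambda=|x|$, considers $r\mapsto g_{b,|x|}(rx)$, and uses that this function is nonnegative on $(0,1)$ and vanishes at $r=1$ to deduce $\frac{d}{dr}g_{b,|x|}(rx)\big|_{r=1}\le 0$. Unwinding this (after the Kelvin substitution) yields the pointwise differential inequality $(N-2)u(x)+2(x-b)\cdot\nabla u(x)\ge 0$, and dividing by $|b|$ and sending $|b|\to\infty$ in all directions forces $\nabla u\equiv 0$. You instead recast \eqref{3.31} globally as the statement that $r\mapsto r^{(N-2)/2}u(b+r\omega)$ is non-decreasing on $(0,\infty)$ for every base point $b$ and direction $\omega$, and then compare two prescribed points $P,Q$ directly by sliding $b$ to infinity along the line $PQ$; the two one-sided limits give $u(P)\ge u(Q)$ and $u(Q)\ge u(P)$. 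What your route buys is that it is differentiation-free — in fact at the sliding step you do not even use continuity of $u$, only the pointwise hypothesis, so the $C^1$ assumption is irrelevant to the conclusion — and it is conceptually cleaner in that it never produces the intermediate first-order inequality. Your closing remark correctly identifies the paper's differential route as the alternative obtained by differentiating your ray-monotonicity at $r$ and letting $|b|\to\infty$; the two are thus two ends of the same underlying observation, with yours being the finite-difference version.
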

\begin{proof}
Let $g_{b,\lambda}(x)=\frac{1}{|x|^{N-2}}u_b(\frac{x}{|x|^2})-\frac{1}{\lambda^{N-2}}u_b(\frac{x}{\lambda^2})$. Then,
\begin{equation}
\begin{split}
g_{b,|x|}(x)=\frac{1}{|x|^{N-2}}u_b(\frac{x}{|x|^2})-\frac{1}{|x|^{N-2}}u_b(\frac{x}{|x|^2})=0
\end{split}
\end{equation}
and for any $0<r<1$, we infer from equation \eqref{3.31} that
\begin{equation}
\begin{split}
g_{b,|x|}(rx)
=\frac{1}{|rx|^{N-2}}u_b(\frac{rx}{|rx|^2})-\frac{1}{|x|^{N-2}}u_b(\frac{rx}{|x|^2})\geq0.
\end{split}
\end{equation}
This implies
\begin{equation}
\begin{split}
\frac{dg_{b,|x|}(rx)}{dr}|_{r=1}\leq0,
\end{split}
\end{equation}
that is,
\begin{equation}
\begin{split}
(N-2)\frac{1}{|x|^{N-2}}u_b(\frac{x}{|x|^2})+\frac{2}{|x|^N}\nabla u_b(\frac{x}{|x|^2}) x\geq0.
\end{split}
\end{equation}
In other word,
\begin{equation}
\begin{split}
(N-2)u(y+b)+2y\cdot \nabla u(y+b)\geq0,
\end{split}
\end{equation}
namely,
\begin{equation}\label{3.5a}
\begin{split}
(N-2)u(x)+2(x-b)\nabla u(x)\geq0.
\end{split}
\end{equation}
Dividing  both sides of \eqref{3.5a} by $|b|$ and letting $|b|\to\infty$, we find $\nabla u(x)\frac{b}{|b|}\leq0$. Since $b$ is arbitrary, we have $\nabla u(x)=0$ or $u\equiv C$.
The assertion follows.
\end{proof}

\bigskip

\begin{proof}[Proof of Lemma \ref{Lemma 1.4}]
Suppose on the contrary that ${\lambda _{b}} \equiv 0$ for all $b \in \mathbb{R}^N$, then we infer from Lemma \ref{t 3.5} that $v(x) \equiv 0$, which yields
 $u \equiv 0$. This is a contradiction because $u$ is a positive solution.
\end{proof}

\bigskip

\begin{lemma}\label{Lemma 1.5}
If ${\lambda _{ b}} > 0$, then $v_{{{ \lambda  }_{ b }}}^b(x) \equiv {v^b}(x),z_{{{  \lambda  }_{ b }}}^{ b ,i}(x) \equiv {z^{b,i}}(x)(i=1,2,...,k)$ in ${B_{{{ \lambda  }_{ b }}}}\setminus \{ 0\}$.
\end{lemma}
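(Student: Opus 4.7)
My plan is to argue by contradiction. Letting $\mu \searrow \lambda_b$ in the inequalities available for $\mu > \lambda_b$ gives $v_{\lambda_b}^b \le v^b$ and $z_{\lambda_b}^{b,i} \le z^{b,i}$ on $B_{\lambda_b}\setminus\{0\}$ for each $i$. I then suppose the asserted identities fail. If $v_{\lambda_b}^b \equiv v^b$ in $B_{\lambda_b}\setminus\{0\}$, the integrand on the right of \eqref{11.5} vanishes and $z_{\lambda_b}^{b,i} \equiv z^{b,i}$ is automatic, which contradicts the assumption; so I may assume $w := v^b - v_{\lambda_b}^b \ge 0$ is not identically zero on $B_{\lambda_b}\setminus\{0\}$.

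Next I would upgrade both inequalities to strict ones. Subtracting the PDE for $v_{\lambda_b}^b$ in \eqref{11.4} from the one for $v^b$ in \eqref{1.3} and using $z_{\lambda_b}^{b,i} \le z^{b,i}$ together with the mean value theorem applied to $t \mapsto t^{(2+\alpha_i)/(N-2)}$ produces a linear differential inequality of the form $-\Delta w + c(x)\, w \ge 0$ on $B_{\lambda_b}\setminus\{0\}$, with $c$ locally bounded (built from the continuous functions $z^{b,i}$, $v^b$, $v_{\lambda_b}^b$ away from the origin). The classical strong maximum principle applied on the connected domain $B_{\lambda_b}\setminus\{0\}$ then promotes $w > 0$ throughout, while Hopf's lemma at the smooth boundary sphere $\partial B_{\lambda_b}$, where $w \equiv 0$ by construction of the Kelvin reflection, yields $\partial w/\partial\nu < 0$. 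Substituting $v_{\lambda_b}^b < v^b$ back into \eqref{11.5} and using strict positivity of the kernel upgrades $z_{\lambda_b}^{b,i} < z^{b,i}$ strictly in $B_{\lambda_b}\setminus\{0\}$ as well.

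I then move the sphere slightly past $\lambda_b$ to contradict the infimum. On any compact subset $K \subset B_{\lambda_b}\setminus\{0\}$, the strict gaps obtained above persist for $\lambda$ slightly below $\lambda_b$ by continuous dependence of the Kelvin reflections $v_\lambda^b, z_\lambda^{b,i}$ on the parameter. In a thin annulus adjacent to the reflection sphere $\partial B_\lambda$, the built-in identity $v_\lambda^b = v^b$ on $\partial B_\lambda$ combined with the Hopf-type derivative information propagates the inequality a definite distance inward, uniformly for $\lambda$ near $\lambda_b$. Near the origin I fall back on the integral maximum principle of Lemma \ref{Lemma 1.2}: both $\|v_\lambda^b\|_{L^{2N/(N-2)}(B_\delta)}$ and $\|z_\lambda^{b,i}\|_{L^{2N/(N-\alpha_i)}(B_\delta)}$ are small for $\delta$ small, uniformly in $\lambda$ close to $\lambda_b$, so the prefactor in \eqref{11.9} stays strictly below one and the inequality $v_\lambda^b \le v^b$ is preserved on the punctured neighborhood of $0$. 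Gluing these three regimes produces some $\lambda < \lambda_b$ for which $v_\lambda^b \le v^b$ and $z_\lambda^{b,i} \le z^{b,i}$ hold on $B_\lambda\setminus\{0\}$, contradicting the definition of $\lambda_b$ as an infimum.

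The main obstacle I anticipate is exactly this tripartite gluing at the critical parameter. Lemma \ref{Lemma 1.3} circumvents it for large $\lambda$ because the integral norms of $v_\lambda^b$ and $z_\lambda^{b,i}$ on the whole of $B_\lambda$ are small; at $\lambda = \lambda_b$ only the local smallness near $0$ remains available, so Hopf's lemma and the strong maximum principle must carry the argument elsewhere, and keeping the estimates uniform as $\lambda \to \lambda_b^-$ is the delicate point.
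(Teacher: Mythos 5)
Your proposal is correct in outline and shares the two pillars of the paper's argument---the strong maximum principle upgrading $v^b\ge v^b_{\lambda_b}$ to a strict inequality on $B_{\lambda_b}\setminus\{0\}$, and Hopf's lemma on $\partial B_{\lambda_b}$---but it closes the argument by a genuinely different route. The paper never re-establishes the inequality for $\lambda$ slightly below $\lambda_b$. Instead it (i) handles the puncture with a harmonic barrier, comparing $v-v_{\lambda_b}$ with $\gamma\big(1-\frac{r^{N-2}}{|x|^{N-2}}\big)$ on annuli $B_{\lambda_b/2}\setminus B_r$ and letting $r\to0$, which gives the uniform gap $v-v_{\lambda_b}\ge\gamma$ on all of $B_{\lambda_b/2}\setminus\{0\}$; and then (ii) takes $\lambda_n\nearrow\lambda_b$ for which the comparison fails, locates interior negative minima $x_n$ (forced into $\{\lambda_b/2<|x|<\lambda_n\}$ by the barrier bound and the uniform convergence of $v_{\lambda_n}$, which unlike $v$ is regular at $0$), and passes to the limit to obtain a point of $\partial B_{\lambda_b}$ where $v-v_{\lambda_b}$ and its gradient both vanish, contradicting Hopf. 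Your continuation scheme (compacta by continuity in $\lambda$, a boundary annulus by the uniform Hopf derivative, the origin by an integral estimate) is the standard quantitative variant and does work; what the paper's barrier buys is precisely the removal of the ``delicate gluing near $0$'' you flag, since the gap $\gamma$ is uniform right up to the puncture and only elementary continuity in $\lambda$ is then needed there.

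One step in your region near the origin must be made precise: Lemma \ref{Lemma 1.2} as stated carries the norms $\|v_\lambda\|_{L^{2N/(N-2)}(B_\lambda)}$ and $\|z^i_\lambda\|_{L^{2N/(N-\alpha_i)}(B_\lambda)}$ over the \emph{whole} ball, and these are not small for $\lambda$ near a fixed finite $\lambda_b$ (this is exactly why Lemma \ref{Lemma 1.3} only works for large $\lambda$). To obtain a prefactor in \eqref{11.9} with norms over $B_\delta$ you must first use your regions (a)--(b) to confine the set $\{v^b_\lambda>v^b\}$ to $B_\delta$, and then rerun the proof of Lemma \ref{Lemma 1.2} noting that every integral there carries a factor $(v^b_\lambda-v^b)^+$ and hence only sees that set; the correct conclusion is then $(v^b_\lambda-v^b)^+\equiv0$ on all of $B_\lambda\setminus\{0\}$, after which \eqref{11.5} yields the $z$-inequalities, and uniformity in $\lambda\in(\lambda_b-\epsilon,\lambda_b)$ contradicts the definition of $\lambda_b$ as an infimum. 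With that localization spelled out (together with the observation you already make, that $v^b_{\lambda_b}\equiv v^b$ forces $z^{b,i}_{\lambda_b}\equiv z^{b,i}$ via \eqref{11.5}), your argument is complete.
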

\begin{proof}
Without loss of generality we assume $b=0$  and  denote
 $$v_{\lambda_0}(x)=v_{{{\lambda  }_0}}^0(x),{z^i_{\lambda_0}}(x) = z_{{{\lambda  }_{ 0 }}}^{ 0 ,i}(x),\,i=1,2,...,k.$$

Assume on the contrary, we claim that there exists a constant $\gamma>0$ such that $v(x)-v_{\lambda_0}(x)\geq \gamma$ in $B_{\frac{\lambda_0}2}\setminus \{0\}$. Indeed, by the strong maximum principle, we have  ${v_{\lambda_0  }}(x) < v(x)$ in ${B_{\lambda_0}}(0)\setminus \{ 0\}$. In particular, $\gamma  = \mathop {\inf }\limits_{\partial {B_{\frac{{\lambda_0  }}
{2}}}} [v(x) - {v_{\lambda_0  }}(x)] > 0$.

Set $\varphi (x) = \gamma (1 - \frac{{{r^{N - 2}}}}{{|x{|^{N - 2}}}})$ with $0<r<\frac{\lambda_0}{2}$ and $\psi(x)=[v(x)-{v_{\lambda_0  }}(x)]-\varphi (x) $, then we have
$$
\left\{
  \begin{array}{ll}
  \displaystyle
  \psi(x) > 0 &{\rm on}\  \partial B_{\frac{\lambda_0}2},  \\
  \displaystyle
 \\ \psi(x){|_{\partial {B_r}}} \ge 0  &{\rm on}\  \partial B_{r},\\
    \displaystyle
 \\  - \Delta \psi(x) \ge 0 &{\rm in}\  \partial B_{\frac{\lambda_0}{2}}\setminus B_{r}.
 \end{array}
\right.$$
We infer from the maximum principle that
$$
v(x)-{v_{\lambda_0  }}(x)\ge  \gamma (1 - \frac{{{r^{N - 2}}}}{{|x{|^{N - 2}}}}).
$$
Let $r\to 0$, then we get $v(x)-{v_{\lambda_0  }}(x)\ge  \gamma$ in ${B_{\frac{{\lambda_0  }}
{2}}}\setminus \{ 0\}$. This proves the claim.

If the conclusion of this lemma does not hold, then there exists a sequence ${\lambda_n}$ with $\lambda_n<\lambda_0,\lambda_n \to \lambda_0$ and $\mathop {\inf }\limits_{{B_{{\lambda _n}}}\setminus \{ 0\} } [v(x) - {v_{{\lambda _n}}}(x)] < 0$. We deduce from the above claim that
 $v(x) - {v_{{\lambda _n}}}(x) \ge \frac{\gamma }
{2} > 0$
for $n$ large enough. Hence,
$\mathop {\inf }\limits_{{B_{{\lambda _n}}}\setminus \{ 0\} } [v(x) - {v_{{\lambda _n}}}(x)]$ is attained at some $x_n$ with $\frac{\lambda_0}{2}<|x_n|<\lambda_n$. In particular, we have $v({x_n}) - {v_{{\lambda _n}}}({x_n}) < 0$ and $\nabla (v({x_n}) - {v_{{\lambda _n}}}({x_n})) = 0.$

We can assume that, up to a subsequence, $x_n\to x_0$, then we conclude that $x_0\in \partial {B_{\lambda_0  }}
$ and $\nabla (v({x_0}) - {v_{\lambda_0}}({x_0})) = 0$. However, this contradicts to the Hopf Lemma.
\end{proof}

\bigskip

\begin{lemma}\label{Lemma 1.6}
For all $b\in \mathbb R^N$, we have $\lambda_b>0$.
\end{lemma}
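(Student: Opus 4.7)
My plan is to argue by contradiction, exploiting the sharp asymptotic decay of $u$ at infinity that is forced by combining Lemma \ref{Lemma 1.4} and Lemma \ref{Lemma 1.5}. First I would pick $\bar b\in\mathbb{R}^N$ with $\lambda_{\bar b}>0$ via Lemma \ref{Lemma 1.4}, and then invoke Lemma \ref{Lemma 1.5} to get the equality $v_{\lambda_{\bar b}}^{\bar b}(x)\equiv v^{\bar b}(x)$ on $B_{\lambda_{\bar b}}\setminus\{0\}$. Setting $\mu_0=1/\lambda_{\bar b}$ and unwinding the definitions of $v^{\bar b}$ and $v_{\lambda_{\bar b}}^{\bar b}$, this is precisely the exact Kelvin symmetry of $u$ about the sphere of radius $\mu_0$ centered at $\bar b$:
\[
u(z)=\Big(\frac{\mu_0}{|z-\bar b|}\Big)^{N-2} u\Big(\bar b+\frac{\mu_0^2 (z-\bar b)}{|z-\bar b|^2}\Big),\qquad z\neq\bar b,
\]
valid first for $|z-\bar b|>\mu_0$ and then, by the involutive nature of the Kelvin reflection, for all $z\ne\bar b$.

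From this identity I would read off a sharp uniform decay rate at infinity. As $|z|\to\infty$, the reflected point $\bar b+\mu_0^2(z-\bar b)/|z-\bar b|^2$ tends to $\bar b$, so continuity of $u\in C^2(\mathbb{R}^N)$ at $\bar b$ together with $u(\bar b)>0$ yields
\[
|z|^{N-2} u(z)\longrightarrow C_0:=\mu_0^{N-2} u(\bar b)>0 \qquad \text{as } |z|\to\infty.
\]
This sharp decay implies that for every fixed $b\in\mathbb{R}^N$ the Kelvin transform $v^b(x)=|x|^{-(N-2)} u(x/|x|^2+b)$ extends continuously to $0$ with $v^b(0)=C_0$: writing $y:=x/|x|^2+b$, one has $|y|\to\infty$ and $|x|\,|y|\to 1$ as $x\to 0$, hence $v^b(x)=u(y)/|x|^{N-2}=C_0/(|x|\,|y|)^{N-2}(1+o(1))\to C_0$. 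On the other hand $v_\mu^b(x)=\mu^{-(N-2)} u(x/\mu^2+b)$ is trivially continuous at the origin with $v_\mu^b(0)=u(b)/\mu^{N-2}$.

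To conclude, I would assume for contradiction that $\lambda_b=0$ for some $b\in\mathbb{R}^N$. Then $v_\mu^b(x)\le v^b(x)$ on $B_\mu\setminus\{0\}$ for every $\mu>0$, and letting $x\to 0$ in this inequality with $\mu$ fixed yields $u(b)/\mu^{N-2}\le C_0$ for all $\mu>0$, which is absurd as $\mu\to 0^+$ since $u(b)>0$. The main obstacle, and the reason the argument must route through Lemma \ref{Lemma 1.5} rather than try to work directly from Lemma \ref{Lemma 1.4}, is extracting the \emph{sharp} asymptotic $|z|^{N-2}u(z)\to C_0$ with a strictly positive limit rather than a mere big-$O$ bound: this step depends crucially on the \emph{equality} in the Kelvin identity at the critical radius $\lambda_{\bar b}$, and would not follow from any one-sided comparison alone.
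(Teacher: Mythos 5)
Your proof is correct and follows essentially the same route as the paper: take $\bar b$ from Lemma~\ref{Lemma 1.4}, unwind the exact Kelvin identity from Lemma~\ref{Lemma 1.5} to extract the sharp asymptotics $|z|^{N-2}u(z)\to C_0=\lambda_{\bar b}^{-(N-2)}u(\bar b)>0$, and then, assuming $\lambda_b=0$, let $x\to 0$ in the one-sided comparison $v_\mu^b\le v^b$ to get $u(b)/\mu^{N-2}\le C_0$ for every $\mu>0$, which fails as $\mu\to 0^+$. This is precisely the paper's argument (with $\mu_0=1/\lambda_{\bar b}$; the paper keeps the variables in the original Kelvin form and writes the contradiction as $\lambda^{N-2}\lambda_{\bar b}^{-(N-2)}u_{\bar b}(0)\ge u_b(0)$ for all $\lambda>0$), so no further comment is needed.
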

\begin{proof}
Since we have proved $\lambda_{\bar b}>0$ for some $\bar b\in \mathbb R^N$, it follows from Lemma \ref{Lemma 1.5} that
\[{v_{\bar b }}(x) = \frac{{{{\lambda_{\bar b } }^{N - 2}}}}
{{|x{|^{N - 2}}}}{v_{\bar b }}\Big(\frac{{{{\lambda_{\bar b }^2 }}x}}
{{|x{|^2}}}\Big)\] for $x\neq 0$.
That is
$$
\frac{\lambda_{\bar b}^{N-2}}{|x|^{N-2}}u_{\bar b}\Big(\frac x{|x|^2}\Big)=u_{\bar b}\Big(\frac x{\lambda_{\bar b}^2}\Big).
$$
for $x\neq 0$.
Let $y=\frac x{|x|^2}$ and $|y|\to \infty$,  we get
\begin{equation}\label{11.11}
\lim_{|y|\to \infty}|y|^{N-2} u_{\bar b}(y)=\frac{1}{\lambda_{\bar b}^{N-2}}u_{\bar b}(0).
\end{equation}

On the other hand, if the conclusion of the lemma does not hold, then there exists $b\in \mathbb R^N$, such that
$${v_b}(x) \ge \frac{{{\lambda ^{N - 2}}}}
{{|x{|^{N - 2}}}}{v_b}\Big(\frac{{{\lambda ^2}x}}
{{|x{|^2}}}\Big)$$ for all $\lambda >0$ and $x\in B_\lambda\setminus \{ 0\}
$. That is
$$
\frac{1}{|x|^{N-2}}u_b(\frac x{|x|^2})\geq \frac {1}{\lambda^{N-2}}u_b\Big(\frac x{\lambda^2}\Big)
$$for all $\lambda >0$ and $x\in B_\lambda\setminus \{ 0\}
$. Letting $x\to 0$ and using \eqref{11.11}, we obtain
$$
\frac{\lambda^{N-2}}{\lambda_{\bar b}^{N-2}}u_{\bar b}(0)\geq u_b(0)
$$
for all $\lambda>0$, which is impossible for $\lambda$ small.

\end{proof}

\begin{lemma}\label{Lemma 1.7}
The positive solution $u$ of problem \eqref{eq:1.10} must have the following form
$$
u(x) = C{\Big(\frac{\varepsilon }
{{{\varepsilon ^2} + |x - {x_0}{|^2}}}\Big)^{\frac{{N - 2}}
{2}}}
$$
for $\varepsilon>0$.
\end{lemma}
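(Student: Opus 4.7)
The plan is to combine the preceding moving spheres machinery into a global symmetry identity for $u$ and then invoke a classical Li--Zhu-type calculus lemma to deduce the explicit form. Specifically, from Lemmas \ref{Lemma 1.5} and \ref{Lemma 1.6} one obtains, for every $b\in\mathbb{R}^N$, a parameter $\lambda_b>0$ such that $v^b_{\lambda_b}(x)\equiv v^b(x)$ on $B_{\lambda_b}\setminus\{0\}$. Because the inversion $x\mapsto \lambda_b^2 x/|x|^2$ is an involution interchanging $B_{\lambda_b}\setminus\{0\}$ with $\mathbb{R}^N\setminus\overline{B_{\lambda_b}}$, this identity propagates automatically to all of $\mathbb{R}^N\setminus\{0\}$.

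The next step is to translate the identity back to $u$. Using the explicit formulas
\[
v^b(x)=|x|^{2-N}u\Bigl(\tfrac{x}{|x|^2}+b\Bigr),\qquad v^b_{\lambda_b}(x)=\lambda_b^{2-N}u\Bigl(\tfrac{x}{\lambda_b^2}+b\Bigr),
\]
and substituting $y=b+x/|x|^2$ (so that $|x|=1/|y-b|$), I would rewrite $v^b_{\lambda_b}\equiv v^b$, with $\mu_b:=1/\lambda_b$, as
\[
u(y)=\Bigl(\tfrac{\mu_b}{|y-b|}\Bigr)^{N-2}u\Bigl(b+\tfrac{\mu_b^2(y-b)}{|y-b|^2}\Bigr)\qquad\text{for all }y\neq b.
\]
Equivalently, $u$ coincides with its own Kelvin reflection about the sphere $\partial B_{\mu_b}(b)$, for \emph{every} center $b\in\mathbb{R}^N$.

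Finally, I would appeal to the classical calculus lemma of Li--Zhu (see also Chen--Li--Ou): any positive $u\in C^1(\mathbb{R}^N)$ which, for every $b\in\mathbb{R}^N$, is invariant under some Kelvin reflection about a sphere centered at $b$ must have the form
\[
u(x)=C\Bigl(\tfrac{\varepsilon}{\varepsilon^2+|x-x_0|^2}\Bigr)^{(N-2)/2}
\]
for some constants $C,\varepsilon>0$ and $x_0\in\mathbb{R}^N$. The regularity $u\in C^2(\mathbb{R}^N)$ needed to apply this lemma has already been recorded at the start of Section 3.

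The main obstacle I expect is the bookkeeping in the middle step: carefully tracking the factor $|x|^{2-N}$ coming from the Kelvin transform together with the factor $\lambda_b^{2-N}$ from the reflection, and confirming that their combination reproduces precisely the Kelvin reflection of $u$ itself about $\partial B_{\mu_b}(b)$ with $\mu_b=1/\lambda_b$. Once this translation is in place, the invocation of the Li--Zhu calculus lemma is routine and immediately yields the claimed explicit form.
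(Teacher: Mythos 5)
Your proposal is correct, and it follows the paper's first step verbatim: Lemmas \ref{Lemma 1.5} and \ref{Lemma 1.6} give $\lambda_b>0$ and the identity $v^b_{\lambda_b}\equiv v^b$ on $B_{\lambda_b}\setminus\{0\}$, which (since the inversion is an involution) extends to all of $\mathbb{R}^N\setminus\{0\}$ and translates into $u$ being invariant under Kelvin reflection about a sphere centered at each $b\in\mathbb{R}^N$. Your bookkeeping of the radius is right: unwinding $v^b_{\lambda_b}(x)=\lambda_b^{2-N}u(x/\lambda_b^2+b)=|x|^{2-N}u(x/|x|^2+b)=v^b(x)$ with $y=x/\lambda_b^2+b$ gives $u(y)=\bigl(\tfrac{\mu_b}{|y-b|}\bigr)^{N-2}u\bigl(b+\tfrac{\mu_b^2(y-b)}{|y-b|^2}\bigr)$ with $\mu_b=1/\lambda_b$ (the paper writes this with the symbol $\lambda_b$ for what you call $\mu_b$, a harmless reuse of notation). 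Where you and the paper genuinely diverge is the final step: you invoke the classical Li--Zhu calculus lemma to pass from universal sphere-reflection invariance to the explicit bubble form, while the paper re-derives that lemma from scratch -- it expands $u$ at infinity to second order from two centers ($0$ and $b$), matches the coefficients of $|x|^{2-N}$ and $x|x|^{-N}$ to obtain $u^{-N/(N-2)}(b)\nabla u(b)=u^{-N/(N-2)}(0)\nabla u(0)-(N-2)b$, and integrates this to get $u^{-2/(N-2)}(x)=|x-x_0|^2+d$. Both routes are valid: yours is shorter and modular (at the cost of requiring the reader to chase a reference), the paper's is self-contained. One small thing to be explicit about if you go the citation route: the Li--Zhu lemma's dichotomy (constant or bubble) must be resolved, which is immediate here since $u\in D^{1,2}(\mathbb{R}^N)$ is positive and nontrivial, so the constant alternative is impossible.
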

\begin{proof}
 By Lemmas \ref{Lemma 1.5} and  \ref{Lemma 1.6}, for all $b \in {\mathbb{R}^N}$, there holds
\[u(x) = \frac{{{{\lambda  }_b}^{N - 2}}}
{{|x - b{|^{N - 2}}}}u\Big(\frac{{{\lambda  _b^2}(x - b)}}
{{|x - b{|^2}}} + b\Big),\]
which implies
\[\mathop {\lim }\limits_{|x| \to \infty } |x{|^{N-2}}u(x) = \lambda  _b^{N-2}u(b) = \lambda  _0^{N-2}u(0) = B > 0.\]

If $B=1$, we have
\begin{equation}\label{11.12}
u(x) = \frac{\lambda  _0^{N - 2}}
{{|x|^{N - 2}}}[u(0) + \nabla u(0)\frac{{\lambda _0^2x}}
{{|x{|^2}}} + o(\frac{1}
{{|x|}})]
\end{equation}and
\begin{equation}\label{11.13}
u(x) = \frac{{\lambda  _b^{N - 2}}}
{{|x - b{|^{N - 2}}}}[u(b) + \nabla u(b)\frac{{\lambda _b^2(x - b)}}
{{|x - b{|^2}}} + o(\frac{1}
{{|x - b|}})]
\end{equation}
 as $x \to  + \infty$.
Replacing
 $$
 \frac 1{|x-b|^{N-2}}=\frac 1{|x|^{N-2}}+(N-2)\frac {x\cdot b}{|x|^N}+O(\frac 1{|x|^N})
 $$
 and
 $$
 \frac 1{|x-b|^{2}}=\frac 1{|x|^2}+2\frac {x\cdot b}{|x|^4}+O(\frac 1{|x|^4}),
 $$
into \eqref{11.13} we find
\begin{equation}\label{11.14}
u(x)=\lambda_b^{N-2}[\frac{u(b)}{|x|^{N-2}}+(N-2)\frac{x\cdot b u(b)}{|x|^N}+\frac{\nabla u(b)\cdot x\lambda_b^2}{|x|^N}+O(\frac{1}{|x|^N})].
\end{equation}
Comparing the coefficients of $\frac{1}{|x|^{N-2}}$ and $\frac{x}{|x|^N}$ in  \eqref{11.12} and  \eqref{11.14}, we obtain
$$
\lambda_0^{N-2}u(0)=\lambda_b^{N-2}u(b)
$$
and
\begin{equation}\label{11.15}
\nabla u(0)\lambda_0^N=\nabla u(b)\lambda_b^N+(N-2)bu(b)\lambda_b^{N-2}.
\end{equation}
It follows from $B=1$ that $\lambda_b=u(b)^{-\frac 1{N-2}}$ and $\lambda_0=u(0)^{-\frac 1{N-2}}$, hence \eqref{11.15} can be written as
\[{u^{ - \frac{N}
{{N - 2}}}}(b)\frac{{\partial u}}
{{\partial {x_i}}}(b) = {u^{ - \frac{N}
{{N - 2}}}}(0)\frac{{\partial u}}
{{\partial {x_i}}}(0) - (N - 2)b_i.\]
Therefore, we have
\[\begin{gathered}
 {u^{ - \frac{2}
{{N - 2}}}}(x) = |x - {x_0}{|^2} + d \hfill \\
\end{gathered} \]
or
\[\begin{gathered}
u(x) = {(\frac{1}
{{d + |x - {x_0}{|^2}}})^{\frac{{N - 2}}
{2}}}. \hfill \\
\end{gathered} \]

If $B\not=1$, the same argument shows that
$$
u(x) = C  {(\frac{\varepsilon }
{{\varepsilon^2  + |x - {x_0}{|^2}}})^{\frac{{N - 2}}
{2}}}
$$
for $\varepsilon>0$.

\end{proof}

\bigskip

\bigskip

 \section{A positive solution}

\bigskip

In this section, we show that there is a positive solution of \eqref{eq:1.1}  at the mountain pass value
\begin{equation}\label{eq:3.1}
c_{\lambda,\Omega}= \inf_{\gamma\in\Gamma}\max_{t\in[0,1]}I_{\lambda,\Omega}(\gamma(t)),
\end{equation}
where
\[
\Gamma:=\{\gamma\in C([0,1], H^1_0(\Omega)): \gamma(0) =0, I_{\lambda,\Omega}(\gamma(1)) \leq 0, \gamma(1)\not = 0\}.
\]
Let
\begin{equation} \label{eq:3.2}
\mathcal{N}_{\lambda,\Omega}=\{u\in H^1_0(\Omega)\setminus \{0\}:\langle I'_{\lambda,\Omega}(u), u\rangle = 0\}.
\end{equation}
We may verify that $\mathcal{N}_{\lambda,\Omega}$ is a manifold, and for each $u\in H^1_0(\Omega)$, there is a unique $t_u>0$ such that  $t_u u\in\mathcal{N}_{\lambda,\Omega}$.
Define
\begin{equation} \label{eq:3.3}
m_{\lambda,\Omega}=\inf\{I_{\lambda,\Omega}(u): u\in \mathcal{N}_{\lambda,\Omega}\}
\end{equation}
and
\begin{equation} \label{eq:3.4}
c^s_{\lambda,\Omega}=\inf_{u\in H^1_0(\Omega)\setminus \{0\}}\sup_{t>0}I_{\lambda,\Omega}(tu).
\end{equation}
Arguing as the proof of Theorem 4.2 in \cite{W}, we see that
\begin{equation} \label{eq:3.5}
m_{\lambda,\Omega} = c_{\lambda,\Omega}=c^s_{\lambda,\Omega}.
\end{equation}

It was proved in \cite{GY} the following Br\'{e}zis-Lieb type lemma.
\begin{lemma}\label{lem:3.1} Let $N\geq 3$ and $0<\alpha<N$. If $\{u_n\}$ is a bounded sequence in $L^{\frac{2N}{N-2}}(\mathbb{R}^N)$ and
$u_n\to u$ almost everywhere in $\mathbb{R}^N$ as $n\to\infty$. Then we have
\[
\begin{split}
&\lim_{n\to\infty}\int_{\mathbb{R}^N}(|x|^{-(N-\alpha)}*|u_n|^{2^*_\alpha})|u_n|^{2^*_\alpha}\,dx-\int_{\mathbb{R}^N}(|x|^{-(N-\alpha)}*|u_n-u|^{2^*_\alpha})|u_n-u|^{2^*_\alpha}\,dx\\ &=\int_{\mathbb{R}^N}(|x|^{-(N-\alpha)}*|u|^{2^*_\alpha})|u|^{2^*_\alpha}\,dx.\\
\end{split}
\]
\end{lemma}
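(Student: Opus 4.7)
The plan is to combine the classical Br\'ezis--Lieb pointwise lemma with the Hardy--Littlewood--Sobolev (HLS) inequality to handle the nonlocal bilinear form. Set $v_n := u_n - u$, so that $v_n \to 0$ a.e. and $\{v_n\}$ is bounded in $L^{2N/(N-2)}(\mathbb{R}^N)$. Because $2^*_\alpha \cdot \tfrac{2N}{N+\alpha} = \tfrac{2N}{N-2}$, the sequences $\{|u_n|^{2^*_\alpha}\}$, $\{|v_n|^{2^*_\alpha}\}$ and $|u|^{2^*_\alpha}$ all lie in $L^{2N/(N+\alpha)}(\mathbb{R}^N)$, and the classical Br\'ezis--Lieb lemma applied to $t \mapsto |t|^{2^*_\alpha}$ yields
\[
r_n := |u_n|^{2^*_\alpha} - |v_n|^{2^*_\alpha} - |u|^{2^*_\alpha} \longrightarrow 0 \quad \text{in } L^{2N/(N+\alpha)}(\mathbb{R}^N).
\]

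Next, I would substitute $|u_n|^{2^*_\alpha} = |v_n|^{2^*_\alpha} + |u|^{2^*_\alpha} + r_n$ into both factors of the double integral, producing nine bilinear pieces. Two of them, namely those coupling $|v_n|^{2^*_\alpha}$ with itself and $|u|^{2^*_\alpha}$ with itself, are exactly the terms appearing in the desired identity. For the mixed term
\[
\iint \frac{|v_n(x)|^{2^*_\alpha} |u(y)|^{2^*_\alpha}}{|x-y|^{N-\alpha}} \, dx\, dy = \int \bigl(|x|^{-(N-\alpha)} \ast |u|^{2^*_\alpha}\bigr)(x) \, |v_n(x)|^{2^*_\alpha}\, dx,
\]
HLS places $|x|^{-(N-\alpha)} \ast |u|^{2^*_\alpha}$ in $L^{2N/(N-\alpha)}(\mathbb{R}^N)$, the dual exponent to $L^{2N/(N+\alpha)}$. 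The bounded sequence $\{|v_n|^{2^*_\alpha}\}$ converges to $0$ a.e., and a standard truncation argument (split $\mathbb{R}^N$ into a large ball and its complement, use a.e.\ convergence together with uniform integrability on the former and smallness of the $L^{2N/(N-\alpha)}$ tail of the convolution on the latter) gives the weak convergence $|v_n|^{2^*_\alpha} \rightharpoonup 0$ against $L^{2N/(N-\alpha)}$, so this mixed term vanishes in the limit; the symmetric partner vanishes by the same reasoning.

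Every remaining term contains $r_n$ in at least one factor, and HLS controls it: for instance,
\[
\biggl|\iint \frac{r_n(x) \, |u_n(y)|^{2^*_\alpha}}{|x-y|^{N-\alpha}} \, dx\, dy\biggr| \leq C \, \|r_n\|_{L^{2N/(N+\alpha)}} \bigl\||u_n|^{2^*_\alpha}\bigr\|_{L^{2N/(N+\alpha)}} \longrightarrow 0,
\]
and similarly for the other $r_n$-terms. Summing these contributions yields the asserted identity. The principal obstacle is the bookkeeping of the nine-term expansion and the justification of the weak convergence $|v_n|^{2^*_\alpha} \rightharpoonup 0$ against the HLS-dual exponent; once these are in place, everything else reduces to routine applications of HLS and the dominated convergence theorem.
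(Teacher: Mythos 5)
Your proof is correct and is essentially the standard argument behind the cited reference [GY] (the paper itself only cites this lemma without reproducing a proof): apply the classical Brézis--Lieb lemma pointwise to $t\mapsto|t|^{2^*_\alpha}$ to produce a remainder $r_n\to 0$ in $L^{2N/(N+\alpha)}$, expand the symmetric bilinear HLS form, kill the $r_n$-terms by the HLS inequality, and kill the mixed $|v_n|^{2^*_\alpha}$--$|u|^{2^*_\alpha}$ term by weak convergence of the bounded, a.e.-null sequence $|v_n|^{2^*_\alpha}$ in $L^{2N/(N+\alpha)}$ against the HLS-dual convolution in $L^{2N/(N-\alpha)}$. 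The weak-convergence step can be stated more cleanly as the general fact that a bounded a.e.-convergent sequence in a reflexive $L^p$, $1<p<\infty$, converges weakly, but your truncation argument is a correct proof of the same thing.
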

\bigskip

We derive from Lemma \ref{lem:3.1} the following result.
\begin{lemma}\label{lem:3.2}
The functional $I_{\lambda,\Omega}$ satisfies $(PS)_c$ condition if $c<m(\mathbb{R}^N).$
\end{lemma}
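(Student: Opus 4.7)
The plan is to follow the standard concentration-splitting scheme for critical problems, using Lemma \ref{lem:3.1} (the Br\'ezis--Lieb identity for the Choquard terms) as the key decomposition tool. Let $\{u_n\}\subset H^1_0(\Omega)$ be a $(PS)_c$ sequence, so $I_{\lambda,\Omega}(u_n)\to c$ and $I'_{\lambda,\Omega}(u_n)\to 0$ in the dual norm. First I would establish boundedness by forming the combination
\[
I_{\lambda,\Omega}(u_n)-\tfrac 1p\langle I'_{\lambda,\Omega}(u_n),u_n\rangle =\Big(\tfrac 12-\tfrac 1p\Big)\|u_n\|_{H^1_0}^2+\sum_{i=1}^k\Big(\tfrac 1p-\tfrac 1{22^*_i}\Big)\int_\Omega\!\!\int_\Omega\tfrac{u_{n,+}^{2^*_i}u_{n,+}^{2^*_i}}{|x-y|^{N-\alpha_i}}\,dxdy,
\]
where the $\lambda$-term cancels out. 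Since $2<p<22^*_{\min}$, all three coefficients on the right are non-negative, and the left side is $c+o(1)+o(1)\|u_n\|_{H^1_0}$; this gives $\|u_n\|_{H^1_0}\leq C$.

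Passing to a subsequence, $u_n\rightharpoonup u$ in $H^1_0(\Omega)$, $u_n\to u$ in $L^p(\Omega)$ by Rellich compactness, and $u_n\to u$ a.e. Testing $I'_{\lambda,\Omega}(u_n)\to 0$ against a fixed $\varphi\in C_c^\infty(\Omega)$ and passing to the limit (using a.e. convergence plus the HLS inequality to handle the convolution terms) shows $I'_{\lambda,\Omega}(u)=0$. Setting $v_n=u_n-u$ so that $v_n\rightharpoonup 0$, I would then apply three Br\'ezis--Lieb-type splittings: the classical one for $\|u_n\|_{H^1_0}^2=\|u\|_{H^1_0}^2+\|v_n\|_{H^1_0}^2+o(1)$; strong $L^p$-convergence giving $\int u_{n,+}^p=\int u_+^p+o(1)$; and Lemma \ref{lem:3.1} for each Choquard term (after extending by zero to $\mathbb{R}^N$). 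Combining them yields
\[
I_{\lambda,\Omega}(u_n)=I_{\lambda,\Omega}(u)+J(v_n)+o(1),\qquad \langle I'_{\lambda,\Omega}(u_n),u_n\rangle=\langle J'(v_n),v_n\rangle+o(1),
\]
since $\langle I'_{\lambda,\Omega}(u),u\rangle=0$.

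From the second identity, $\|v_n\|_{H^1_0}^2=\sum_{i=1}^k\int\!\!\int\tfrac{v_n^{2^*_i}v_n^{2^*_i}}{|x-y|^{N-\alpha_i}}+o(1)$. Suppose for contradiction $\|v_n\|_{H^1_0}\not\to 0$; then along a subsequence $\|v_n\|_{H^1_0}^2\to\ell>0$ (the HLS and Sobolev inequalities give a lower bound of dichotomy type, ruling out $\ell$ arbitrarily small). Scaling $v_n\mapsto tv_n$ and looking at the unique maximum of $t\mapsto J(tv_n)$, I would find $t_n>0$ with $t_nv_n\in\mathcal{M}_{\mathbb{R}^N}$ determined by $t_n^2\|v_n\|_{H^1_0}^2=\sum_i t_n^{22^*_i}\int\!\!\int\tfrac{v_n^{2^*_i}v_n^{2^*_i}}{|x-y|^{N-\alpha_i}}$; together with $\ell>0$ this forces $t_n\to 1$. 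Hence $J(v_n)=J(t_nv_n)+o(1)\geq m(\mathbb{R}^N)+o(1)$.

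Finally, because $u$ is a critical point of $I_{\lambda,\Omega}$, the same ``$-\tfrac 1p$'' combination used for boundedness gives $I_{\lambda,\Omega}(u)\geq 0$. Therefore
\[
c=I_{\lambda,\Omega}(u)+J(v_n)+o(1)\geq m(\mathbb{R}^N),
\]
contradicting $c<m(\mathbb{R}^N)$. Hence $\|v_n\|_{H^1_0}\to 0$, i.e.\ $u_n\to u$ strongly in $H^1_0(\Omega)$. The main obstacle I expect is the bookkeeping needed to rule out $\ell$ small without invoking concentration--compactness: the Sobolev/HLS lower bound on $\ell$ and the consequent control $t_n\to 1$ must be extracted purely from the asymptotic Nehari identity for $v_n$, since no profile decomposition is available at this stage.
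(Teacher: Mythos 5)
Your proposal is correct and follows essentially the same route as the paper: boundedness via a linear combination of $I_{\lambda,\Omega}(u_n)$ and $\langle I'_{\lambda,\Omega}(u_n),u_n\rangle$, passage to a weak limit $u$ which is a nonnegative-energy critical point, the Br\'ezis--Lieb splitting of Lemma \ref{lem:3.1} applied to $v_n=u_n-u$ extended by zero, and projection of $v_n$ onto $\mathcal{M}_{\mathbb{R}^N}$ with $t_n\to 1$ to force $c\geq m(\mathbb{R}^N)$, a contradiction. The only cosmetic difference is your use of the $\tfrac1p$-combination for boundedness (the paper uses the $\tfrac12$-combination), and your explicit handling of the lower bound on $\|v_n\|$ via the Sobolev/HLS inequalities, which the paper leaves implicit.
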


\begin{proof} Let $\{u_n\}\subset H^1_0(\Omega)$ be a  $(PS)_c$ sequence of $I_{\lambda,\Omega}$ with $c<m(\mathbb{R}^N)$, that is,
\[
I_{\lambda,\Omega}(u_n)\to c,\quad I'_{\lambda,\Omega}(u_n)\to 0
\]
as $n\to\infty$. This yields
\[
c+o(1)= (\frac 12-\frac 1p)\lambda\int_\Omega u_n^p\,dx +\sum_{i=1}^k(\frac 12 -\frac 1{22^*_i})\int_\Omega\int_\Omega\frac{u_n^{2_i^*}(x)u_n^{2_i^*}(y)}{|x-y|^{N-\alpha_i}}\,dxdy
\]
implying that there is a positive constant $C$ such that for $i=1,2,\cdot\cdot\cdot,k$,
\[
\int_\Omega u_n^p\,dx\leq C, \quad \int_\Omega\int_\Omega\frac{u_n^{2_i^*}(x)u_n^{2_i^*}(y)}{|x-y|^{N-\alpha_i}}\,dxdy\leq C,
\]
and then $\{u_n\}$ is bounded in $H^1_0(\Omega)$. So we may assume that
\begin{equation} \label{eq:3.6}
u_n\rightharpoonup u_0\quad{\rm in}\quad H^1_0(\Omega);\quad u_n\to u_0\quad a.e. \quad{\rm in}\quad \Omega;\quad u_n\to u_0\quad{\rm in}\quad L^p(\Omega).
\end{equation}
The weak convergence implies that $I'_{\lambda,\Omega}(u_0) =0$, namely, $u_0$ is a weak solution of \eqref{eq:1.1}. Moreover, $I_{\lambda,\Omega}(u_0)\geq 0$. Let $v_n=u_n-u_0$. By Lemma \ref{eq:3.1} and \eqref{eq:3.6}, we deduce that
\begin{equation} \label{eq:3.7}
o(1) = \langle I'_{\lambda,\Omega}(v_n),v_n \rangle +\langle I'_{\lambda,\Omega}(u_0), u_0\rangle = \langle I'_{\lambda,\Omega}(v_n),v_n\rangle
\end{equation}
and
\begin{equation} \label{eq:3.8}
c+o(1) = I_{\lambda,\Omega}(v_n) + I_{\lambda,\Omega}(u_0)\geq I_{\lambda,\Omega}(v_n).
\end{equation}

Suppose now on the contrary that $u_n$ does not converge strongly to $u_0$ in $H^1_0(\Omega)$, then $v_n=u_n-u_0\not\to 0$ in $H^1_0(\Omega)$ as $n\to\infty$. Extending $v_n$ to $\mathbb{R}^N$ by setting $v_n=0$ outside $\Omega$. Then, there is $t_n>0, \, t_n\to 1$ as $n\to\infty$  such that $t_nv_n\in \mathcal{M}_{\mathbb{R}^N}$. We derive from \eqref{eq:3.7} and \eqref{eq:3.8} that $c\geq m(\mathbb{R}^N)$ a contradiction. The assertion follows.
\end{proof}

\bigskip

Now, we verify the condition in Lemma \ref{lem:3.2}. Without loss of generality, we assume $B_{2r}(0)\subset \Omega$ for $r>0$ small.  Let $\varphi\in C^1_0(B_{2r}(0))$ be
a cut-off function satisfying $\varphi\equiv 1$ on $B_r(0)$. Define
\begin{equation} \label{eq:3.9}
U_\varepsilon(x) =\varepsilon^{-\frac{N-2}2}U\big(\frac x\varepsilon\big),\quad u_\varepsilon(x) =\varphi(x)U_\varepsilon(x),
\end{equation}
where  $U(x) = \frac C{(1+|x|^2)^{\frac{N-2}2}}$ is the minimizer of $m(\mathbb{R}^N)$.

\begin{lemma}\label{lem:3.3}
There holds $c_{\lambda,\Omega}<m(\mathbb{R}^N).$
\end{lemma}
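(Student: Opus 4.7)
The plan is to use the concentrating family $\{u_\varepsilon\}$ as a test path in the mountain pass characterisation \eqref{eq:3.5}, so that it suffices to exhibit $\varepsilon>0$ with
\[
\sup_{t>0} I_{\lambda,\Omega}(tu_\varepsilon) < m(\mathbb{R}^N).
\]
The strategy is a Br\'ezis--Nirenberg type comparison: the pure critical part of $I_{\lambda,\Omega}(tu_\varepsilon)$ stays close to $m(\mathbb{R}^N)$ with a cutoff error of order $\varepsilon^{N-2}$, while the subcritical term $-\frac{\lambda}{p}t^{p}\int_\Omega u_\varepsilon^{p}$ provides a strictly negative contribution of order $\varepsilon^{N-(N-2)p/2}$. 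Since $p>2$ and $N\ge 4$ force $N-\frac{(N-2)p}{2}<N-2$, the latter order will dominate as $\varepsilon\to 0^+$.

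Concretely, I would first record the standard tail estimates based on the bounds $U_\varepsilon(x)\le C\varepsilon^{(N-2)/2}|x|^{-(N-2)}$ and $|\nabla U_\varepsilon(x)|\le C\varepsilon^{(N-2)/2}|x|^{-(N-1)}$ for $|x|\ge r$, together with the localisation of $\nabla\varphi$ in an annulus, to obtain
\[
\int_\Omega|\nabla u_\varepsilon|^2\,dx=\int_{\mathbb{R}^N}|\nabla U|^2\,dx+O(\varepsilon^{N-2}).
\]
A Hardy--Littlewood--Sobolev argument, using that the tail $(1-\varphi^{2^*_i})U_\varepsilon^{2^*_i}$ has $L^{2N/(N+\alpha_i)}(\Omega)$-norm of order $\varepsilon^{(N+\alpha_i)/2}=o(\varepsilon^{N-2})$ (recall $\alpha_i>N-4$), gives analogously
\[
\int_\Omega\int_\Omega\frac{u_\varepsilon^{2^*_i}(x)u_\varepsilon^{2^*_i}(y)}{|x-y|^{N-\alpha_i}}\,dx\,dy=\int_{\mathbb{R}^N}\int_{\mathbb{R}^N}\frac{U^{2^*_i}(x)U^{2^*_i}(y)}{|x-y|^{N-\alpha_i}}\,dx\,dy+o(\varepsilon^{N-2}).
\]
Combined with the explicit identity $m(\mathbb{R}^N)=\max_{t>0}[\tfrac{t^2}{2}\|U\|^2-\sum_i\tfrac{t^{2\cdot 2^*_i}}{2\cdot 2^*_i}B_i(U)]$ furnished by Proposition~\ref{prop:2.1} and continuity of the maximum in the coefficients, these yield $\max_{t>0}J(tu_\varepsilon)=m(\mathbb{R}^N)+O(\varepsilon^{N-2})$. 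For the subcritical term, the change of variables $x=\varepsilon y$ gives
\[
\int_\Omega u_\varepsilon^p\,dx=\varepsilon^{N-\frac{(N-2)p}{2}}\int_{\mathbb{R}^N}\varphi(\varepsilon y)^{p}U(y)^{p}\,dy\ge C_0\,\varepsilon^{N-\frac{(N-2)p}{2}},
\]
where $\int_{\mathbb{R}^N}U^{p}\,dx<\infty$ because $p>2$ and $N\ge 4$.

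Let $t_\varepsilon>0$ denote the unique maximiser of $t\mapsto I_{\lambda,\Omega}(tu_\varepsilon)$. The Nehari-type equation defining $t_\varepsilon$, together with $\int_\Omega u_\varepsilon^{p}\to 0$ and $\|u_\varepsilon\|^2\to\|U\|^2$, shows that $t_\varepsilon\to t_U>0$, so $t_\varepsilon$ remains in a compact subset of $(0,\infty)$. Writing $I_{\lambda,\Omega}(tu_\varepsilon)=J(tu_\varepsilon)-\tfrac{\lambda}{p}t^{p}\int_\Omega u_\varepsilon^{p}\,dx$ and invoking the estimates above,
\[
\sup_{t>0}I_{\lambda,\Omega}(tu_\varepsilon)\le \max_{t>0}J(tu_\varepsilon)-\frac{\lambda t_\varepsilon^{p}}{p}\int_\Omega u_\varepsilon^{p}\,dx\le m(\mathbb{R}^N)+O(\varepsilon^{N-2})-C\lambda\,\varepsilon^{N-\frac{(N-2)p}{2}}.
\]
The assumption $p>2$ and $N\ge 4$ makes the exponent $N-\frac{(N-2)p}{2}$ strictly smaller than $N-2$, so the last negative term beats the cutoff error for $\varepsilon$ small, and the strict inequality $c_{\lambda,\Omega}<m(\mathbb{R}^N)$ follows.

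The main technical obstacle is the nonlocal cutoff estimate for the Choquard-type double integrals: unlike the pointwise critical term in the classical Br\'ezis--Nirenberg setting, here one must compare $\int\int(\varphi U_\varepsilon)^{2^*_i}(x)(\varphi U_\varepsilon)^{2^*_i}(y)/|x-y|^{N-\alpha_i}\,dx\,dy$ with its untruncated counterpart through a convolution kernel, and a crude pointwise splitting is not sufficient. I would expand the product, isolate tail factors of the form $(1-\varphi^{2^*_i})U_\varepsilon^{2^*_i}$, and use the Hardy--Littlewood--Sobolev inequality to bound each cross and pure-tail integral by $L^{2N/(N+\alpha_i)}$-norms of those tails; the decay assumption $\alpha_i>N-4$ then guarantees that these corrections are genuinely $o(\varepsilon^{N-2})$ and so are absorbed by the gain coming from the subcritical term.
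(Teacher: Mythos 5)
Your proof is correct and takes essentially the same Br\'ezis--Nirenberg route as the paper's: same truncated instanton $u_\varepsilon=\varphi U_\varepsilon$, same leading-order estimates for the Dirichlet energy, the Choquard integrals and the subcritical $L^p$ term, and the same observation that $N-\frac{(N-2)p}{2}<N-2$ makes the subcritical gain dominate. The only minor divergence is that you conclude via the clean bound $\sup_{t>0}I_{\lambda,\Omega}(tu_\varepsilon)\le\max_{t>0}J(tu_\varepsilon)-\frac{\lambda t_\varepsilon^{p}}{p}\int_\Omega u_\varepsilon^{p}\,dx$ (with $t_\varepsilon$ bounded away from zero), which sidesteps the paper's explicit computation of $t'_\varepsilon$ and its Taylor expansions of powers of $t_\varepsilon$, and your Choquard-tail estimate $o(\varepsilon^{N-2})$, though coarser than the paper's $O(\varepsilon^{N+\alpha_i})$, is still sufficient because $\alpha_i>N-4$ gives $(N+\alpha_i)/2>N-2$.
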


\begin{proof} We may verify as \cite{GY} that
\begin{equation} \label{eq:3.10}
\int_\Omega|\nabla u_\varepsilon(x)|^2\,dx = \int_{\mathbb{R}^N}|\nabla U(x)|^2\,dx +O(\varepsilon ^{N-2}),
\end{equation}
and for each $i$,
\begin{equation} \label{eq:3.11}
\int_\Omega\int_\Omega\frac{u_\varepsilon^{2_i^*}(x)u_\varepsilon^{2_i^*}(y)}{|x-y|^{N-\alpha_i}}\,dxdy = \int_{\mathbb{R}^N}\int_{\mathbb{R}^N}\frac{U^{2_i^*}(x)U^{2_i^*}(y)}{|x-y|^{N-\alpha_i}}\,dxdy + O(\varepsilon^{N+\alpha_i}).
\end{equation}
Denote
\[
d_p = \int_{\mathbb{R}^N}U^p(x)\,dx.
\]
Since $2<p<2^*_{min}$, we have $N-(N-2)p<0$ and $N-\frac{(N-2)p}{2}>0$ if $N\geq 4$. Hence,
\[
\begin{split}
\int_{\Omega}u_\varepsilon^p(x)\,dx&\geq \int_{B_r(0)}U_\varepsilon^p(x)\,dx = \bigg(\int_{\mathbb{R}^N}-\int_{B_r^c(0)}\bigg)U_\varepsilon^p(x)\,dx\\
&= \varepsilon^{N-\frac{(N-2)p}2}d_p +O(\varepsilon^{\frac{(N-2)p}2}).\\
\end{split}
\]
and
\[
\begin{split}
\int_{\Omega}u_\varepsilon^p(x)\,dx&\leq \int_{B_{2r}(0)}U_\varepsilon^p(x)\,dx = \bigg(\int_{\mathbb{R}^N}-\int_{B_{2r}^c(0)}\bigg)U_\varepsilon^p(x)\,dx\\
&= \varepsilon^{N-\frac{(N-2)p}2}d_p +O(\varepsilon^{\frac{(N-2)p}2}).\\
\end{split}
\]
Thus,
\begin{equation} \label{eq:3.12}
\int_{\Omega}u_\varepsilon^p(x)\,dx=\varepsilon^{N-\frac{(N-2)p}2}d_p +O(\varepsilon^{\frac{(N-2)p}2}).
\end{equation}

We know that there exists $t_\varepsilon>0$ such that $t_\varepsilon u_\varepsilon\in \mathcal{N}_{\lambda,\Omega}$, that is,
\begin{equation} \label{eq:3.13}
t_\varepsilon^2\int_{\Omega}|\nabla u_\varepsilon|^2\,dx=\lambda t_\varepsilon^{p}\int_{\Omega} u_\varepsilon^p\,dx + \sum_{i=1}^kt_\varepsilon^{22^*_i}\int_\Omega\int_\Omega\frac{u_\varepsilon^{2_i^*}(x)u_\varepsilon^{2_i^*}(y)}{|x-y|^{N-\alpha_i}}\,dxdy.
\end{equation}
By \eqref{eq:3.10} - \eqref{eq:3.12},
\begin{equation} \label{eq:3.14}
\begin{split}
\int_{\mathbb{R}^N}|\nabla U|^2\,dx+O(\varepsilon^{N-2})&=\lambda t_\varepsilon^{p-2}(\varepsilon^{N-\frac{(N-2)p}2}d_p +O(\varepsilon^{\frac{(N-2)p}2})) \\
&+ \sum_{i=1}^kt_\varepsilon^{22^*_i-2}\bigg(\int_{\mathbb{R}^N}\int_{\mathbb{R}^N}\frac{U^{2_i^*}(x)U^{2_i^*}(y)}{|x-y|^{N-\alpha_i}}\,dxdy+ O(\varepsilon^{\frac{N+\alpha_i}2})\bigg).\\
\end{split}
\end{equation}
The fact $U\in \mathcal{M}_{\mathbb{R}^N}$ implies $t_\varepsilon\to 1$ as $\varepsilon \to 0$. Therefore, $t_\varepsilon = 1+o(1)$ and the derivative $t'_\varepsilon$ of $t_\varepsilon$
is given by
\[
t'_\varepsilon=\frac{-\lambda (N-\frac{(N-2)p}2d_p )\varepsilon^{N-\frac{(N-2)p}2-1}(1+o(1))+O(\varepsilon^{\frac{(N-2)p}2-1})}{\sum_{i=1}^k(22^*_i-2)
\bigg(\int_{\mathbb{R}^N}\int_{\mathbb{R}^N}\frac{U^{2_i^*}(x)U^{2_i^*}(y)}{|x-y|^{N-\alpha_i}}\,dxdy+ O(\varepsilon^{\frac{N+\alpha_i}2})\bigg)(1+o(1))+O(\varepsilon^{N-\frac{(N-2)p}2})}.
\]
Hence, there exists $C_p>0$ such that
\[
t'_\varepsilon= -C_p \varepsilon^{N-\frac{(N-2)p}2-1}(1+o(1)).
\]
As a result,
\[
\begin{split}
c_{\lambda,\Omega}&\leq I_{\lambda,\Omega}(t_\varepsilon u_\varepsilon)= \frac{1-2C_p\varepsilon^{N-\frac{(N-2)p}2}(1+o(1))}2\big(\int_{\mathbb{R}^N}|\nabla U(x)|^2\,dx +O(\varepsilon ^{N-2})\big)\\
&-\frac{1- pC_p \varepsilon^{N-\frac{(N-2)p}2}(1+o(1))}{p}\big(\lambda\varepsilon^{N-\frac{(N-2)p}2}d_p +O(\varepsilon^{\frac{(N-2)p}2})\big)\\
&-\sum_{i=1}^k\frac{1- 2_i^*C_p \varepsilon^{N-\frac{(N-2)p}2}(1+o(1))}{2_i^*}\bigg(\int_{\mathbb{R}^N}\int_{\mathbb{R}^N}\frac{U^{2_i^*}(x)U^{2_i^*}(y)}{|x-y|^{N-\alpha_i}}\,dxdy+ O(\varepsilon^{\frac{N+\alpha_i}2})\bigg)\\
&= m(\mathbb{R}^N)- \frac 1p \lambda\varepsilon^{N-\frac{(N-2)p}2}d_p(1+o(1))<m(\mathbb{R}^N)\\
\end{split}
\]
if $\varepsilon>0$ small enough.

\end{proof}

\bigskip

{\bf Proof of Theorem \ref{thm:2}.} Since $m_{\lambda,\Omega} = c_{\lambda,\Omega}$, let $\{u_n\}\subset \mathcal{N}_{\lambda,\Omega}$ be a minimizing sequence of $I_{\lambda,\Omega}$.
Lemmas \ref{lem:3.2} and \ref{lem:3.3} imply $\{u_n\}$ contains a convergent subsequence, the result readily follows.    $\Box$

\bigskip

\bigskip

 \section{Multiple solutions}

\bigskip

In this section, we prove that problem \eqref{eq:1.1} possesses at least $cat_\Omega(\Omega)$ positive solutions. Denote
\[
\Omega_r^+=\{x\in \mathbb{R}^N: d(x,\Omega)\leq r\}\quad{\rm and }\quad\Omega_r^-=\{x\in \Omega: d(x,\partial \Omega)\geq r\}.
\]
Choose $r>0$ so that $\Omega_r^+,\,\Omega^-_r$ and $\Omega$ are homotopically equivalent.

Let
\[
J_\Omega(u) = \frac {1}2 \int_{\Omega}|\nabla u|^2\,dx -\sum_{i=1}^k\frac {1}{22^*_i}\int_{\Omega}\int_{\Omega}\frac{u(x)^{2^*_i}u(y)^{2^*_i}}{|x-y|^{N-\alpha_i}}\,dxdy
\]
and define
\[
\mathcal{M}_{\Omega}=\{u\in H_0^1(\Omega)\setminus\{0\}:\langle J_\Omega'(u), u\rangle =0\}.
\]
 We define as \eqref{eq:3.1} and \eqref{eq:3.4} that
\[
b_{m}= \inf_{\gamma\in\Gamma}\max_{t\in[0,1]}J(\gamma(t)),
\]
where
\[
\Gamma:=\{\gamma\in C([0,1], D^{1,2}(\mathbb{R}^N)): \gamma(0) =0, J(\gamma(1)) \leq 0, \gamma(1)\not = 0\}.
\]
and
\[
b_s=\inf_{u\in D^{1,2}(\mathbb{R}^N)\setminus \{0\}}\sup_{t>0}J(tu).
\]
We also have
\[
m_{\mathbb{R}^N} = b_m=b_s.
\]

We recall that a measurable function $u: \mathbb{R}^N \to \mathbb{R}$ belongs to
the Morrey space $\mathcal{L}^{p,\mu}(\mathbb{R}^N)$ with $p\in [1,\infty)$ and $\mu\in (0,N]$, if and only if
\begin{equation} \label{eq:4.00}
\|u\|^p_{\mathcal{L}^{p,\mu}(\mathbb{R}^N)} = \sup_{R>0,x\in \mathbb{R}^N}R^{\mu-N} \int_{B_R(x)}|u(x)|^p\,dy<\infty.
\end{equation}

\begin{proposition}\label{prop:3.1} Let $\{u_n\}\subset \mathcal{M}_{\Omega}$ be a sequence such that $J_{\Omega}(u_n)\to m(\mathbb{R}^N)$ as $n\to\infty$. Then there exist $(\gamma_n, x_n)\in \mathbb{R}_+\times \Omega$ such that
\[
v_n(x)=\gamma_n^{\frac{N-2}2}u_n(\gamma_n x +x_n)
\]
contains a convergent subsequence in $D^{1,2}(\mathbb{R}^N)$. Moreover, $\gamma_n\to 0$, $x_n\to x\in\bar\Omega$ and $v_n\to U$ in $D^{1,2}(\mathbb{R}^N)$.
\end{proposition}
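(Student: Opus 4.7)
The plan is to extend each $u_n$ by zero to all of $\mathbb{R}^N$ and view the extended sequence as a minimizing sequence for $m(\mathbb{R}^N)$, then recover compactness modulo the group of translations and dilations via an improved Sobolev inequality in Morrey space. Extension by zero preserves $\|\nabla u_n\|_2$ and the Riesz potential integrals, so $u_n\in\mathcal{M}_{\mathbb{R}^N}$ and $J(u_n)=J_\Omega(u_n)\to m(\mathbb{R}^N)$. Combining the Nehari identity with Hardy-Littlewood-Sobolev yields
\begin{equation*}
\|\nabla u_n\|_2^2=\sum_{i=1}^k\int_{\mathbb{R}^N}\int_{\mathbb{R}^N}\frac{u_n^{2^*_i}(x)u_n^{2^*_i}(y)}{|x-y|^{N-\alpha_i}}\,dxdy\le \sum_{i=1}^k S_i^{-2^*_i}\|\nabla u_n\|_2^{2\cdot 2^*_i},
\end{equation*}
forcing a lower bound $\|\nabla u_n\|_2\ge\kappa>0$, while $J(u_n)\to m(\mathbb{R}^N)$ yields an upper bound. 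By Ekeland's principle on $\mathcal{M}_\Omega$, I may further assume $J_\Omega'(u_n)\to 0$ in $H^{-1}(\Omega)$, which after extension gives $J'(u_n)\to 0$ in $D^{-1,2}(\mathbb{R}^N)$.

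Next I would invoke the scale-invariant improved Sobolev inequality
\begin{equation*}
\|u\|_{L^{2^*}(\mathbb{R}^N)}\le C\,\|\nabla u\|_{L^2(\mathbb{R}^N)}^{\theta}\,\|u\|_{\mathcal{L}^{2,N-2}(\mathbb{R}^N)}^{1-\theta}
\end{equation*}
for some $\theta\in(0,1)$, where $\mathcal{L}^{2,N-2}$ is the Morrey space of \eqref{eq:4.00}. The two-sided bound on $\|\nabla u_n\|_2$ together with the uniform positivity of $\|u_n\|_{L^{2^*}}$ (from HLS and the Nehari identity applied to each nonlocal term) forces $\|u_n\|_{\mathcal{L}^{2,N-2}}\ge c>0$. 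By the definition of the Morrey norm there exist $\bar\gamma_n>0$ and $\bar x_n\in\mathbb{R}^N$ with $\bar\gamma_n^{-2}\int_{B_{\bar\gamma_n}(\bar x_n)}u_n^2\,dx\ge c/2$. Since $u_n$ vanishes outside $\Omega$, that ball meets $\Omega$, so after a harmless enlargement of radius I may pick $x_n\in\Omega$ and $\gamma_n$ comparable to $\bar\gamma_n$ for which the same concentration estimate persists. Setting $v_n(x)=\gamma_n^{(N-2)/2}u_n(\gamma_n x+x_n)$, the rescaling preserves the functional, the Nehari identity and the Palais-Smale condition, and the concentration estimate becomes $\int_{B_1(0)}v_n^2\,dx\ge c'>0$, so $v_n$ cannot vanish.

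Along a subsequence $v_n\rightharpoonup v$ in $D^{1,2}(\mathbb{R}^N)$ with $v\ne 0$ by Rellich compactness on $B_1$. Passing to the limit in $J'(v_n)\to 0$ gives $J'(v)=0$, so $v$ weakly solves \eqref{eq:1.10}, $v\in\mathcal{M}_{\mathbb{R}^N}$, and $J(v)\ge m(\mathbb{R}^N)$. Applying the standard Brézis-Lieb splitting for $\|\nabla\cdot\|_2^2$ and Lemma \ref{lem:3.1} to each Riesz nonlinearity with $w_n=v_n-v$ yields
\begin{equation*}
J(v_n)=J(v)+J(w_n)+o(1),\qquad \langle J'(v_n),v_n\rangle=\langle J'(v),v\rangle+\langle J'(w_n),w_n\rangle+o(1),
\end{equation*}
so $J(w_n)\to m(\mathbb{R}^N)-J(v)\le 0$ and $\langle J'(w_n),w_n\rangle\to 0$. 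If $w_n\not\to 0$ in $D^{1,2}$, then $\|\nabla w_n\|_2\ge\kappa'>0$ along a subsequence, and a unique $t_n\to 1$ satisfies $t_n w_n\in\mathcal{M}_{\mathbb{R}^N}$, giving $J(w_n)=J(t_n w_n)+o(1)\ge m(\mathbb{R}^N)+o(1)$, which contradicts $J(w_n)\le o(1)$. Hence $v_n\to v$ strongly in $D^{1,2}$ and $J(v)=m(\mathbb{R}^N)$; by Theorem \ref{thm:1}, $v$ equals $U$ after a further translation and dilation, which I absorb into $(\gamma_n,x_n)$ so that $v_n\to U$.

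Finally, since $\Omega$ is bounded and $x_n\in\Omega$, along a subsequence $x_n\to x\in\overline\Omega$. If $\gamma_n\not\to 0$ then along a subsequence $\gamma_n\to\gamma_0>0$ and each $v_n$ is supported in the bounded set $\gamma_n^{-1}(\Omega-x_n)\to\gamma_0^{-1}(\Omega-x)$; since $U>0$ everywhere in $\mathbb{R}^N$, this contradicts $v_n\to U$ in $L^{2^*}_{\mathrm{loc}}$, and hence $\gamma_n\to 0$. The \textbf{main obstacle} is the non-vanishing step: bypassing the full concentration-compactness principle requires the Morrey improved Sobolev inequality to pin down a single concentration scale, after which the Brézis-Lieb splitting together with the uniqueness of the ground state (Theorem \ref{thm:1}) drive the remaining arguments.
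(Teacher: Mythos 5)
Your overall route coincides with the paper's: Nehari and HLS bounds give two-sided control of $\|\nabla u_n\|_2$ and a lower bound on $\|u_n\|_{L^{2^*}}$, the Morrey-space improved Sobolev inequality produces a concentration ball, rescaling gives $\int_{B_1(0)}v_n^2\,dx\ge c'>0$ and a nonzero weak limit, a Br\'ezis--Lieb splitting upgrades weak to strong convergence, Theorem \ref{thm:1} identifies the limit with $U$, and a support argument forces $\gamma_n\to 0$. The one step that fails as written is the claim that Ekeland's principle on $\mathcal{M}_\Omega$ gives $J_\Omega'(u_n)\to 0$ in $H^{-1}(\Omega)$ ``which after extension gives $J'(u_n)\to 0$ in $D^{-1,2}(\mathbb{R}^N)$.'' Extension by zero does not preserve approximate criticality in the dual of the larger space: smallness of $J_\Omega'(u_n)$ is tested only against $\varphi\in H^1_0(\Omega)$, whereas $J'(u_n)$ in $D^{-1,2}(\mathbb{R}^N)$ must be tested against all $\varphi\in D^{1,2}(\mathbb{R}^N)$, which do not vanish on $\partial\Omega$. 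Indeed, if $u$ is an exact critical point of $J_\Omega$ (a Dirichlet solution), its zero extension satisfies $-\Delta u=f\chi_\Omega+\mu$ in $\mathcal{D}'(\mathbb{R}^N)$ with $\mu$ a nonzero surface measure on $\partial\Omega$ coming from the normal derivative, and such a term is not small in $D^{-1,2}(\mathbb{R}^N)$. Since you use precisely this to pass to the limit in $J'(v_n)\to 0$ and conclude $J'(v)=0$ (hence $v\in\mathcal{M}_{\mathbb{R}^N}$ and $J(v)\ge m(\mathbb{R}^N)$, which is what drives the Br\'ezis--Lieb step), this is a genuine gap.

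It is repairable within your framework: extend first, observe that the extended $u_n$ lie in $\mathcal{M}_{\mathbb{R}^N}$ with $J(u_n)=J_\Omega(u_n)\to m(\mathbb{R}^N)=\inf_{\mathcal{M}_{\mathbb{R}^N}}J$, and apply Ekeland (or the standard Nehari-manifold argument) on $\mathcal{M}_{\mathbb{R}^N}$ itself; this yields $\tilde u_n\in\mathcal{M}_{\mathbb{R}^N}$ with $\|\tilde u_n-u_n\|_{D^{1,2}}\to 0$ and $J'(\tilde u_n)\to 0$ in $D^{-1,2}(\mathbb{R}^N)$, and since translations and dilations are isometries of $D^{1,2}(\mathbb{R}^N)$ the conclusion transfers back to the given sequence -- a remark you need anyway, because ``I may further assume'' replaces $u_n$, while the proposition asserts convergence of rescalings of the original $u_n$. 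For comparison, the paper does not manufacture a Palais--Smale sequence at all: it simply asserts $\langle J'(v_0),\varphi\rangle=0$ for the weak limit and then runs the same Fatou/Br\'ezis--Lieb computation, so you are more explicit than the paper about the one point it leaves implicit, but the particular justification you chose does not work. Two smaller points: in excluding $\gamma_n\not\to 0$ your dichotomy should also cover $\gamma_n\to\infty$ (your support argument still applies, since then $\mathrm{supp}\,v_n\subset\gamma_n^{-1}(\overline\Omega-x_n)$ shrinks to a point; alternatively use the paper's H\"older estimate against \eqref{eq:4.0f}); the relocation of the Morrey concentration ball to a center in $\Omega$ with comparable radius is fine as you sketch it.
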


\begin{proof}
Since $\{u_n\}\subset \mathcal{M}_{\Omega}$, we have
\begin{equation} \label{eq:4.0}
\|u_n\|_{H_0^1(\Omega)}^2= \sum_{i=1}^k\int_{\Omega}\int_{\Omega}\frac{u_n(x)^{2^*_i}u_n(y)^{2^*_i}}{|x-y|^{N-\alpha_i}}\,dxdy.
\end{equation}
Extending $u_n$ to $\mathbb{R}^N$ by setting $u_n=0$ outside $\Omega$ if necessary,   then we have
\begin{equation} \label{eq:4.0a}
m(\mathbb{R}^N)+o(1) = J_\Omega(u_n)= \sum_{i=1}^k(\frac 12-\frac {1}{22^*_i})\int_{\Omega}\int_{\Omega}\frac{u_n(x)^{2^*_i}u_n(y)^{2^*_i}}{|x-y|^{N-\alpha_i}}\,dxdy,
\end{equation}
which implies that for all  $i=1,\cdot\cdot\cdot,k$, there exists $C>0$ such that
\[
\int_{\Omega}\int_{\Omega}\frac{u_n(x)^{2^*_i}u_n(y)^{2^*_i}}{|x-y|^{N-\alpha_i}}\,dxdy\leq C.
\]
By \eqref{eq:4.0}, $\{u_n\}$ is uniformly bounded in $H_0^1(\Omega)$. Moreover, by the definition of $S_i=S_{\alpha_i}$ in \eqref{eq:1.3} and Sobolev embedding, we have
\[
\|u_n\|^2_{H_0^1(\Omega)}=   \sum_{i=1}^k\int_{\Omega}\int_{\Omega}\frac{u_n(x)^{2^*_i}u_n(y)^{2^*_i}}{|x-y|^{N-\alpha_i}}\,dxdy
\leq \sum_{i=1}^kS_i^{\frac{N+\alpha_i}{N-2}}\|u_n\|_{H_0^1(\Omega)}^{\frac{2(N+\alpha_i)}{N-2}}.
\]
Hence, there exists $C>0$ such that $\|u_n\|_{H_0^1(\Omega)}\geq C$, and we deduce from \eqref{eq:4.0} that
\[
\sum_{i=1}^k\int_{\Omega}\int_{\Omega}\frac{u_n(x)^{2^*_i}u_n(y)^{2^*_i}}{|x-y|^{N-\alpha_i}}\,dxdy\geq C.
\]
We may assume that
\begin{equation} \label{eq:4.0b}
\lim_{n\to\infty}\int_{\Omega}\int_{\Omega}\frac{u_n(x)^{2^*_1}u_n(y)^{2^*_1}}{|x-y|^{N-\alpha_1}}\,dxdy\geq \sigma
\end{equation}
for some $\sigma>0$. Extending $u_n$ to $\mathbb{R}^N$ by setting $u_n=0$ outside $\Omega$, we infer from the Hardy-Littlewood-Sobolev inequality that
\begin{equation} \label{eq:4.0c}
\bigg(\int_{\mathbb{R}^N}\int_{\mathbb{R}^N}\frac{u_n(x)^{2^*_1}u_n(y)^{2^*_1}}{|x-y|^{N-\alpha_i}}\,dxdy\bigg)^{\frac{N-2}{2(N+\alpha)}}\leq \|u_n\|_{L^{2^*}(\mathbb{R}^N)}.
\end{equation}
The improved Sobolev inequality
\begin{equation} \label{eq:4.0d}
\|u_n\|_{L^{2^*}(\mathbb{R}^N)}\leq \|u_n\|^\theta_{D^{1,2}(\mathbb{R}^N)}\|u_n\|_{\mathcal{L}^{2,N-2}(\mathbb{R}^N)}^{1-\theta}
\end{equation}
in \cite{PP} together with \eqref{eq:4.0c}  yield
\begin{equation} \label{eq:4.0e}
\bigg(\int_{\mathbb{R}^N}\int_{\mathbb{R}^N}\frac{u_n(x)^{2^*_1}u_n(y)^{2^*_1}}{|x-y|^{N-\alpha_i}}\,dxdy\bigg)^{\frac{N-2}{2(N+\alpha)}}\leq \|u_n\|^\theta_{D^{1,2}(\mathbb{R}^N)}\|u_n\|_{\mathcal{L}^{2,N-2}(\mathbb{R}^N)}^{1-\theta},
\end{equation}
where $\frac {N-2}N\leq \theta<1$. Since $\{u_n\}$ is bounded in $D^{1,2}(\mathbb{R}^N)$, \eqref{eq:4.0b} implies
\[
\|u_n\|_{\mathcal{L}^{2,N-2}(\mathbb{R}^N)}\geq \sigma_1
\]
for some $\sigma_1>0$. Note that for each $n$, the support of $u_n$ is contained in $\Omega$, then there exist $x_n\in \Omega$ and $\gamma_n\in \mathbb{R}$ such that
\begin{equation}\label{eq:4.0f}
\gamma_n^{-2}\int_{B_{\gamma_n}(x_n)}|u_n(y)|^2\,dy\geq \|u_n\|^2_{\mathcal{L}^{2,N-2}}-\frac C{2n}\geq C_1>0.
\end{equation}
Let $v_n(x) = \gamma_n^{\frac{N-2}2}u_n(\gamma_n x +x_n)$. Then
\begin{equation}\label{eq:4.0g}
\int_{B_{1}(0)}|v_n(y)|^2\,dy\geq C_1>0.
\end{equation}
We remark that $\{v_n\}\subset \mathcal{M}_{\mathbb{R}^N}$ and $I(v_n)\to m(\mathbb{R}^N)$ as $n\to \infty$. Therefore, $\{v_n\}$ is bounded in $D^{1,2}(\mathbb{R}^N)$, and we may assume that
\[
v_n\rightharpoonup v_0 \quad {\rm in }\quad D^{1,2}(\mathbb{R}^N);\quad v_n\to v_0 \quad {\in}\quad L^p_{loc}(\mathbb{R}^N);\quad v_n\to v_0\quad a.e.\quad \mathbb{R}^N.
\]
As a result of \eqref{eq:4.0g}, $v_0\not\equiv 0$.

Now, we show that $J(v_0) = b_s$. Indeed, using the fact that $\langle J'(v_n), v_n\rangle = o(1)$, we obtain
\[
\begin{split}
b_s = \sum_{i=1}^k\frac 12\bigg(\frac 12 - \frac 1{2^*_i}\bigg)\int_{\mathbb{R}^N}\int_{\mathbb{R}^N}
\frac{|v_n(x)|^{2^*_i}|v_n(y)|^{2^*_i}}{|x-y|^{N-\alpha_i}}\,dxdy + o(1).
\end{split}
\]
By Lemma \ref{lem:3.1},
\[
\lim_{n\to\infty}\int_{\mathbb{R}^N}\int_{\mathbb{R}^N}
\frac{|v_n(x)|^{2^*_i}|v_n(y)|^{2^*_i}}{|x-y|^{N-\alpha_i}}\,dxdy\geq
\int_{\mathbb{R}^N}\int_{\mathbb{R}^N}
\frac{|v_0(x)|^{2^*_i}|v_0(y)|^{2^*_i}}{|x-y|^{N-\alpha_i}}\,dxdy
\]
and using the fact $\langle J'(v_0), \varphi\rangle = 0$ for $\varphi\in D^{1,2}(\mathbb{R}^N)$, we obtain
\[
b_s \geq \sum_{i=1}^k\frac 12\bigg(\frac 12 - \frac 1{2^*_i}\bigg)\int_{\mathbb{R}^N}\int_{\mathbb{R}^N}
\frac{|v_0(x)|^{2^*_i}|v_0(y)|^{2^*_i}}{|x-y|^{N-\alpha_i}}\,dxdy = J(v_0)\geq m(\mathbb{R}^N)= b_m= b_s.
\]

Next, we show that actually
$v_n$  converges strongly to $v$ in $D^{1,2}(\mathbb{R}^N)$. In fact,  we deduce from $\lim_{n\to\infty}J(v_n) = b_s = J(v_0)$ and $\langle J'(v_n),v_n\rangle = \langle J'(v_0),v_0\rangle + o(1)$ as well as  Lemma \ref{lem:3.1} that
\[
J(v_n-v_0) = o(1)\quad{\rm and}\quad \langle J'(v_n-v_0),v_n-v_0\rangle = o(1)
\]
leading to the result. By Theorem \ref{thm:1}, $v_0=U$.

Finally, we show that $\gamma_n\to 0$ as $n\to\infty$. Suppose on the contrary that $\gamma_n\not\to 0$ as $n\to\infty$, then we have,  up to a subsequence, either $\gamma_n\to\gamma_0>0$ or $\gamma_n\to\infty$.

If $\gamma_n\to\gamma_0$, we see that $supp\, v_n$ is contained in a bounded domain, so does $supp\, v_0$. However, $v_0=U$, which is a contradiction.

If $\gamma_n\to\infty$, the boundedness of $\{u_n\}$ in $H_0^1(\Omega)$ yields
\[
\gamma_n^{-2}\int_{B_{\gamma_n}(x_n)}|u_n(y)|^2\,dy\leq \gamma_n^{-2}\bigg(\int_{\Omega}|u_n(y)|^{2^*}\,dy\bigg)^{\frac{N-2}N}\bigg(\int_\Omega1\,dy\bigg)^{\frac 2N}\leq \frac C{\gamma_n}\to 0
\]
as $n\to\infty$, which contradicts to \eqref{eq:4.0f}.

\end{proof}

\bigskip
We may assume that $B_r(0)\subset\Omega$ and define
\begin{equation} \label{eq:4.1}
m(\lambda, r) = m_{\lambda, B_r(0)}.
\end{equation}
By Theorem \ref{thm:2},  $m(\lambda, r)$ is achieved by a positive function, which is radially symmetric about the origin.
We remark  that $m(\lambda, r)$ does not depend on the choice of the center of the ball, but only on the radius. So for every $x\in \mathbb{R}^N$, we have $m(\lambda, r) = m(\lambda, B_r(x))$. Obviously, $m(\lambda, \Omega)<m(\lambda, r)$.

Denote
\[
\|u\|^{22^*_i}_{HL}=\int_{\mathbb{R}^N}\int_{\mathbb{R}^N}\frac{u^{2_i^*}(x)u^{2_i^*}(y)}{|x-y|^{N-\alpha_i}}\,dxdy.
\]
Let $\eta\in C_c^\infty(\mathbb{R}^N, \mathbb{R}^N)$ be such that $\eta(x)=x$ for all $x\in\bar\Omega$.
We introduce the barycenter of a function $u\in H^1(\mathbb{R}^N)$ as
\begin{equation} \label{eq:4.2}
\beta(u) = \sum_{i=1}^k\frac{1}{\|u\|^{22^*_i}_{HL}}\int_{\mathbb{R}^N}\int_{\mathbb{R}^N}\frac{\eta(x)u^{2_i^*}(x)u^{2_i^*}(y)}{|x-y|^{N-\alpha_i}}\,dxdy.
\end{equation}

\begin{lemma}\label{lem:4.2} There exists $\lambda^*>0$ such that if $\lambda\in(0,\lambda^*)$ and $u\in \mathcal{N}_{\lambda,\Omega}$ satisfying $I_{\lambda,\Omega}(u)<m(\lambda,r)$, then
$\beta(u)\in \Omega_r^+$.
\end{lemma}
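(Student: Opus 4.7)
The plan is to argue by contradiction. Assume there exist sequences $\lambda_n\to 0^+$ and $u_n\in \mathcal{N}_{\lambda_n,\Omega}$ with $I_{\lambda_n,\Omega}(u_n)<m(\lambda_n,r)$ but $\beta(u_n)\notin \Omega_r^+$. The strategy is to show that after the rescaling provided by Proposition \ref{prop:3.1}, the mass of $u_n$ concentrates at a single point of $\bar\Omega$, forcing $\beta(u_n)$ into $\Omega_r^+$ for large $n$.

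First I would verify the auxiliary fact $m(\lambda,r)\to m(\mathbb{R}^N)$ as $\lambda\to 0^+$: the upper bound comes from a test function $u_\varepsilon$ supported in $B_r(0)$ exactly as in Lemma \ref{lem:3.3}, and the lower bound from the Nehari characterization combined with $m_{B_r(0)}\geq m(\mathbb{R}^N)$. From the Nehari identity for $u_n$ and the energy bound $I_{\lambda_n,\Omega}(u_n)<m(\lambda_n,r)$ one then deduces that $\{u_n\}$ is bounded in $H^1_0(\Omega)$ and that $\lambda_n\int_\Omega u_n^p\,dx\to 0$.

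For each $n$ pick the unique $t_n>0$ with $w_n:=t_n u_n\in\mathcal{M}_\Omega$; the vanishing of the $\lambda$-term forces $t_n\to 1$. Since $u_n$ is the maximum of $t\mapsto I_{\lambda_n,\Omega}(tu_n)$ on $(0,\infty)$, one obtains
\[
J_\Omega(w_n)=I_{\lambda_n,\Omega}(w_n)+\tfrac{\lambda_n t_n^p}{p}\int_\Omega u_n^p\,dx\leq I_{\lambda_n,\Omega}(u_n)+o(1)<m(\lambda_n,r)+o(1),
\]
and extension by zero embeds $\mathcal{M}_\Omega$ into $\mathcal{M}_{\mathbb{R}^N}$, giving $J_\Omega(w_n)\geq m(\mathbb{R}^N)$. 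Hence $\{w_n\}$ is a minimizing sequence for $m(\mathbb{R}^N)$ along $\mathcal{M}_\Omega$, and Proposition \ref{prop:3.1} produces $\gamma_n\to 0^+$, $x_n\to x_0\in\bar\Omega$ with $v_n(y):=\gamma_n^{(N-2)/2}w_n(\gamma_n y+x_n)\to U$ strongly in $D^{1,2}(\mathbb{R}^N)$. Because $\beta(cu)=\beta(u)$ for $c>0$, $\beta(u_n)=\beta(w_n)$, and the change of variables $x=\gamma_n y+x_n$, $y'=\gamma_n y''+x_n$ in each of the $k$ double integrals defining $\beta(w_n)$ causes the Jacobians $\gamma_n^{2N}$, the pointwise factor $\gamma_n^{-(N+\alpha_i)}$ from the two copies of $w_n^{2^*_i}$, and the kernel factor $\gamma_n^{-(N-\alpha_i)}$ to cancel exactly, turning every term into $\|v_n\|_{HL}^{-22^*_i}\int\int \eta(\gamma_n y+x_n)v_n^{2^*_i}(y)v_n^{2^*_i}(y'')|y-y''|^{\alpha_i-N}\,dy\,dy''$. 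Strong convergence $v_n\to U$ in $D^{1,2}(\mathbb{R}^N)$ (hence in $L^{2N/(N-2)}$) together with the Hardy--Littlewood--Sobolev inequality supplies an integrable dominant, so dominated convergence and the pointwise limit $\eta(\gamma_n y+x_n)\to \eta(x_0)=x_0$ yield that $\beta(u_n)$ converges to a point determined by $x_0\in\bar\Omega\subset\Omega_r^+$, contradicting the assumption.

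The main obstacle is this final passage to the limit in the nonlocal integrals defining $\beta$: it relies crucially on the strong $D^{1,2}$ convergence of Proposition \ref{prop:3.1}, since mere weak convergence would allow small-mass dilation tails capable of displacing the barycenter outside $\Omega_r^+$. This is precisely the role played by the Morrey-improved Sobolev inequality used in the proof of Proposition \ref{prop:3.1}; the auxiliary fact $m(\lambda,r)\to m(\mathbb{R}^N)$ is routine but essential, because it converts the energy threshold into the minimizing-sequence property that feeds Proposition \ref{prop:3.1}.
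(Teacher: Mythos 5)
Your proposal is correct and follows essentially the same route as the paper's proof: a contradiction argument with $\lambda_n\to 0$, projection of $u_n$ onto the Nehari manifold $\mathcal{M}_\Omega$ of $J_\Omega$ combined with the fiber-maximum property and $m(\lambda_n,r)<m(\mathbb{R}^N)$ to produce a minimizing sequence for $m(\mathbb{R}^N)$, then Proposition \ref{prop:3.1} and the scale invariance of $\beta$ plus dominated convergence to force $\beta(u_n)\to x_0\in\bar\Omega$. The only differences (proving $m(\lambda,r)\to m(\mathbb{R}^N)$ and $t_n\to 1$) are harmless extras; the paper only needs $m(\lambda,r)<m(\mathbb{R}^N)$ from Lemma \ref{lem:3.3} and boundedness of the projection parameters.
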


\begin{proof} We argue indirectly. Suppose by condition that there exist  $\lambda_n\to0$,  $u_n\in \mathcal{N}_{\lambda_n,\Omega}$ such that
\begin{equation} \label{eq:4.3}
m(\lambda_n,\Omega)\leq I_{\lambda_n,\Omega}(u_n)\leq m(\lambda_n,r)<m(\mathbb{R}^N),
\end{equation}
but $\beta(u_n)\not\in \Omega_r^+$. Define
\[
g_{u_n}(t) = I_{\lambda_n,\Omega}(tu_n)= \frac {t^2}2 \int_{\Omega}|\nabla u_n|^2\,dx-\frac {t^p}p\lambda_n\int_{\Omega} u_n^p\,dx -\sum_{i=1}^k\frac {t^{22^*_i}}{22^*_i}\int_{\Omega}\int_{\Omega}\frac{u_n(x)^{2^*_i}u_n(y)^{2^*_i}}{|x-y|^{N-\alpha_i}}\,dxdy.
\]
For each fixed $n$, $g_{u_n}(0)=0,\, g_{u_n}(t)\to -\infty$ if $t\to\infty.$ Thus there exists a maximum point $t_n>0$ of  $g_{u_n}(t)$ such that $g_{u_n}'(t_n)=0$.
We claim that the maximum point $t_n>0$ is unique. Indeed,
\[
g_{u_n}'(t)=t \int_{\Omega}|\nabla u_n|^2\,dx- t^{p-1}\lambda_n\int_{\Omega} u_n^p\,dx -\sum_{i=1}^kt^{22^*_i-1}\int_{\Omega}\int_{\Omega}\frac{u_n(x)^{2^*_i}u_n(y)^{2^*_i}}{|x-y|^{N-\alpha_i}}\,dxdy=th_{u_n}(t),
\]
where
\[
h_{u_n}(t)=\int_{\Omega}|\nabla u_n|^2\,dx- t^{p-2}\lambda_n\int_{\Omega} u_n^p\,dx -\sum_{i=1}^kt^{22^*_i-2}\int_{\Omega}\int_{\Omega}\frac{u_n(x)^{2^*_i}u_n(y)^{2^*_i}}{|x-y|^{N-\alpha_i}}\,dxdy.
\]
Apparently, there exists unique $t_n>0$ such that $h_{u_n}(t_n)=0$, which is the maximum point of $g_{u_n}(t)$. Since $u_n\in \mathcal{N}_{\lambda_n,\Omega}$, we
get $t_n=1$, that is,
\begin{equation} \label{eq:4.4}
I_{\lambda_n,\Omega}(u_n)=\sup_{t\geq 0}I_{\lambda_n,\Omega}(t u_n).
\end{equation}
Moreover, by the definition of $S_i=S_{\alpha_i}$ in \eqref{eq:1.3} and Sobolev embedding, we may prove as the proof of Proposition \ref{prop:3.1} that there exists $C>0$ such that
$C^{-1}\leq\|u_n\|_{H_0^1(\Omega)}\leq C$.
Solving
$$
\|su_n\|_{H_0^1(\Omega)}^2= \sum_{i=1}^ks^{22^*_i}\int_{\Omega}\int_{\Omega}\frac{u_n(x)^{2^*_i}u_n(y)^{2^*_i}}{|x-y|^{N-\alpha_i}}\,dxdy,
$$
we find there exists a unique $s_n>0$ such that $s_nu_n\in \mathcal{M}_{\lambda_n,\Omega}$, that is,
$$
\|u_n\|_{H_0^1(\Omega)}^2= \sum_{i=1}^ks_n^{22^*_i-2}\int_{\Omega}\int_{\Omega}\frac{u_n(x)^{2^*_i}u_n(y)^{2^*_i}}{|x-y|^{N-\alpha_i}}\,dxdy.
$$
The boundedness of $\{u_n\}$ in $H_0^1(\Omega)$ implies that $\{s_n\}$ is bounded.

Let $v_n(x)=s_nu_n(x)$. Then, $\{v_n\}$ is uniformly bounded in $H_0^1(\Omega)$ and $\lambda_n\int_\Omega |v_n|^p\,dx\to 0$ as $n\to\infty$.
We conclude by \eqref{eq:4.4} that
\begin{equation} \label{eq:4.5}
\begin{split}
m(\mathbb{R}^N)&\leq J(v_n)=J(s_nu_n) = I_{\lambda_n,\Omega}(s_nu_n) + \lambda_n\int_\Omega |v_n|^p\,dx\\
&\leq I_{\lambda_n,\Omega}(u_n)+o(1)\leq m(\mathbb{R}^N)+o(1),\\
\end{split}
\end{equation}
which yields
\[
J(v_n)\to m(\mathbb{R}^N)
\]
as $n\to\infty$. By Proposition \ref{prop:3.1}, there exist $\gamma_n\in \mathbb{R}$ and $x_n\in \Omega$ such that $x_n\to x_0\in \bar\Omega$, $w_n(x) =\gamma_n ^{\frac {N-2}2}v_n(\gamma_n x+x_n)\to U(x)$ in $D^{1,2}(\mathbb{R}^N)$ as $n\to\infty$. Therefore, the Lebesgue dominated theorem yields
\begin{equation} \label{eq:4.6}
\begin{split}
\beta(u_n)&=\beta(s_nu_n) = \beta(v_n)= \sum_{i=1}^k\frac{1}{\|v_n\|^{22^*_i}_{HL,}}\int_{\mathbb{R}^N}\int_{\mathbb{R}^N}\frac{\eta(x)v_n^{2_i^*}(x)v_n^{2_i^*}(y)}{|x-y|^{N-\alpha_i}}\,dxdy\\
&= \sum_{i=1}^k\frac{1}{\|w_n\|^{22^*_i}_{HL,}}\int_{\mathbb{R}^N}\int_{\mathbb{R}^N}\frac{\eta(\gamma x+x_n)w_n^{2_i^*}(x)w_n^{2_i^*}(y)}{|x-y|^{N-\alpha_i}}\,dxdy,\\
&\to x_0\in\bar\Omega.
\end{split}
\end{equation}
This contradicts the assumption $\beta(u_n)\not\in \Omega_r^+$. The assertion follows.

\end{proof}

\bigskip

{\bf Proof of Theorem \ref{thm:3}:} Let
$I_\lambda^c(\mathcal{N}_{\lambda,\Omega})=\{u\in \mathcal{N}_{\lambda,\Omega}: I_\lambda(u)\leq c\}$ be a level set. We claim that
\[
cat_{I_{\lambda,\Omega}^{m(\lambda,r)+\lambda^{q*} \delta}(\mathcal{N}_{\lambda,\Omega})}(I_{\lambda,\Omega}^{m(\lambda,r)+\lambda^{q*} \delta}(\mathcal{N}_{\lambda,\Omega}))\geq cat_\Omega(\Omega).
\]
Indeed, let us define $\gamma:\Omega_r^-\to I_{\lambda,\Omega}^{m(\lambda,r)+\lambda^{q*} \delta}(\mathcal{N}_{\lambda,\Omega})$ by
\[
\gamma(y)(x) = v(x-y) \quad {\rm if }\quad x\in B_r(y);\quad \gamma(y)(x) = 0 \quad {\rm if }\quad x\not\in B_r(y),
\]
where $v\in H^1_0(B_r(0))$  is a positive minimizer of $m(\lambda,r)$. We can assume that $v$ is radial, see \cite{MS} and references therein. Therefore, $\beta\circ \gamma = id :\Omega_r^-\to\Omega_r^-$. Suppose now that
\[
n= cat_{I_{\lambda,\Omega}^{m(\lambda,r)+\lambda^{q*} \delta}(\mathcal{N}_{\lambda,\Omega})}(I_{\lambda,\Omega}^{m(\lambda,r)+\lambda^{q*} \delta}(\mathcal{N}_{\lambda,\Omega})).
\]
Then
\[
I_{\lambda,\Omega}^{m(\lambda,r)+\lambda^{q*} \delta}(\mathcal{N}_{\lambda,\Omega})=\cup_{i=1}^nA_i,
\]
where $A_i,i=1,...,n,$ is closed and contractible in $I_{\lambda,\Omega}^{m(\lambda,r)+\lambda^{q*} \delta}(\mathcal{N}_{\lambda,\Omega})$, that is, there exists $h_i\in C([0,1]\times A_i, I_{\lambda,\Omega}^{m(\lambda,r)+\lambda^{q*} \delta}(\mathcal{N}_{\lambda,\Omega}))$ such that, for every $u,v\in A_i$,
\[
h_i(0,u)=u,\quad h_j(1,u) =  h_j(1,v).
\]
Let $B_i = \gamma^{-1}(A_i), 1\leq i\leq n$.  Then $B_i$ are closed and
\[
\Omega_r^- =\cup_{i=1}^nB_i.
\]
By  Lemma \ref{lem:4.2} and the deformation
\[
g_i(t,x)=\beta(h_i(t,\gamma(x))),
\]
we see that $B_i$ is contractible in $\Omega_r^+$. It follows that
\[
cat_\Omega(\Omega)= cat_{\Omega_r^+}(\Omega_r^-)\leq \sum_{i=1}^ncat_{\Omega_r^+}(B_i)=n.
\]
Hence, $I_{\lambda,\Omega}$ has at least $cat_\Omega(\Omega)$ critical points on $\mathcal{N}_{\lambda,\Omega}$. The proof is completed. $\Box$

\bigskip

\bigskip

\bigskip

\bigskip

\vspace{2mm} \noindent{\bf Acknowledgment.}  This work was supported by NNSF of China (No: 12171212 and No: 11771300).

\end{document}